\numberwithin{equation}{section}
\newtheorem{theorem}{Theorem}[section]
\newtheorem{proposition}[theorem]{Proposition}
\newtheorem{lemma}[theorem]{Lemma}
\newtheorem{corollary}[theorem]{Corollary}
\theoremstyle{definition}
\newtheorem{example}[theorem]{Example}
\newtheorem{question}[theorem]{Question}
\newtheorem{remark}[theorem]{Remark}
\newcommand{\ZZ}{ \ensuremath{\mathbb{Z}}}
\newcommand{\reg}{\mathop{\mathrm{reg}}}
\newcommand{\Tor}{\ensuremath{\mathrm{Tor}}\hspace{1pt}}
\newcommand{\Hom}{\ensuremath{\mathrm{Hom}}\hspace{1pt}}
\newcommand{\Ext}{\ensuremath{\mathrm{Ext}}\hspace{1pt}}
\newcommand{\pdim}{\operatorname{pdim}}
\newcommand{\ee}{\mathbf{e}}
\newcommand{\eee}{\mathfrak{e}}
\def\cocoa{{\hbox{\rm C\kern-.13em o\kern-.07em C\kern-.13em o\kern-.15em A}}}
\newcommand{\bb}[1]{\mathbb{#1}}
\newcommand{\bma}{{\bm a}}
\newcommand{\bmb}{{\bm b}}
\newcommand{\bmc}{{\bm c}}
\newcommand{\bmd}{{\bm d}}
\newcommand{\bme}{{\bm e}}
\newcommand{\bt}{\boxtimes}
\newcommand{\bw}{\bigwedge}
\newcommand{\com}{\mathfrak{com}}
\newcommand{\defi}[1]{{\bf \upshape\sffamily #1}}
\newcommand{\gr}{\operatorname{gr}}
\newcommand{\grmod}{\mathfrak{grmod}}
\def\lra{\longrightarrow}
\def\lla{\longleftarrow}
\newcommand{\mc}[1]{\mathcal{#1}}
\newcommand{\mf}[1]{\mathfrak{#1}}
\newcommand{\mods}{\mathfrak{mod}}
\newcommand{\oo}{\otimes}
\newcommand{\supp}{\operatorname{supp}}
\newcommand{\ul}[1]{\underline{#1}}
\newcommand{\vecs}{\mathfrak{vec}}
\newcommand{\kk}{\Bbbk}
\newcommand{\pd}{\partial}
\newcommand{\sym}{\ensuremath{\mathfrak{S}}}
\newcommand{\lcm}{\ensuremath{\mathop{\mathrm{lcm}}}}
\begin{document}

\title[Equivariant Hochster's formula]{An equivariant Hochster's formula\\ for $\sym_n$-invariant monomial ideals}

\author{Satoshi Murai}
\address{
Satoshi Murai,
Department of Mathematics
Faculty of Education
Waseda University,
1-6-1 Nishi-Waseda, Shinjuku, Tokyo 169-8050, Japan}
\email{s-murai@waseda.jp}

\author{Claudiu Raicu}
\address{Department of Mathematics, University of Notre Dame, 255 Hurley, Notre Dame, IN~46556, USA\newline
\indent Institute of Mathematics ``Simion Stoilow'' of the Romanian Academy}
\email{craicu@nd.edu}


\begin{abstract}
Let $R=\kk[x_1,\dots,x_n]$ be a polynomial ring over a field $\kk$ and let $I\subset R$ be a monomial ideal preserved by the natural action of the symmetric group $\sym_n$ on~$R$. We give a combinatorial method to determine the $\sym_n$-module structure of $\Tor_i(I,\kk)$.
Our formula shows that $\Tor_i(I,\kk)$ is built from induced representations of tensor products of Specht modules associated to hook partitions, and their multiplicities are determined by topological Betti numbers of certain simplicial complexes. 
This result can be viewed as an $\sym_n$-equivariant analogue of Hochster's formula for Betti numbers of monomial ideals. We apply our results to determine extremal Betti numbers of $\sym_n$-invariant monomial ideals, and in particular recover formulas for their Castelnuovo--Mumford regularity and projective dimension. We also give a concrete recipe for how the Betti numbers change as we increase the number of variables, and in characteristic zero (or $>n$) we compute the $\sym_n$-invariant part of $\Tor_i(I,\kk)$ in terms of $\Tor$ groups of the unsymmetrization of $I$.
\end{abstract}

\maketitle

\section{Introduction}
\ytableausetup{centertableaux}
\ytableausetup{boxsize=0.3em}

Let $R=\kk[x_1,\dots,x_n]$ be a polynomial ring over a field $\kk$.
The study of the graded Betti numbers
$$\beta_{ij}(I)=\dim_\kk \Tor_i(I,\kk)_j$$
of a homogeneous ideal $I$ is one of the central research topics in commutative algebra. When $I$ is a monomial ideal, this is closely related to combinatorial topology, and is the subject of a vast literature \cite[Section~5]{BH}, \cite[Part~II]{HH}, \cite[Part~I]{MS-book}, \cite[Part~III]{Pe-book}. One of the most famous results on this topic is Hochster's formula \cite[\S 5.5]{BH}, which enables one to study Betti numbers of monomial ideals using combinatorics of simplicial complexes, and has seen numerous applications over the years. The goal of this paper is to develop an analogue of Hochster's formula for monomial ideals~$I$ that are invariant under the action of the symmetric group $\sym_n$ by coordinate permutations. 

In general, if $G\subseteq\operatorname{GL}_n(\kk)$ is a subgroup and $I\subseteq R$ is a $G$-invariant ideal ($g(I)=I$ for all $g\in G$), then each $\Tor_i(I,\kk)$ acquires a $G$-module structure. The representation theory of $G$ dictates the possible building blocks that make up $\Tor_i(I,\kk)$, reducing the problem of understanding $\Tor_i(I,\kk)$ to that of identifying the multiplicity of each building block. When $G=\kk^{\times n}$ is the $n$-torus, the building blocks are $1$-dimensional, given by the torus characters, and the calculation of their multiplicity (the dimension of the multigraded components of $\Tor_i(I,\kk)$) is the content of Hochster's formula. When $G=\kk^{\times n}\rtimes\sym_n$ extends the torus action by that of the symmetric group, our work will show that a natural set of building blocks arises via induction from Specht modules associated to hook partitions for smaller symmetric groups. We then identify the multiplicities of each block with topological Betti numbers of associated simplicial complexes, which yields an \defi{$\sym_n$-equivariant Hochster's formula}.

To state our results, we first introduce some notation. We let
$$P_n=\{(\lambda_1,\dots,\lambda_n) \in \ZZ^n \mid \lambda_1 \geq \cdots \geq \lambda_n \geq 0\}$$
be the set of partitions consisting of $n$ non-negative integers. For a vector $\bma=(a_1,\dots,a_n) \in \ZZ_{\geq 0}^n$,
we write $x^{\bm a}=x_1^{a_1} \cdots x_n^{a_n}$, and write $\mathrm{part}(\bm a) \in P_n$ for the unique partition which is a rearrangement of $a_1,\dots,a_n$.
For example, we have $\mathrm{part}(2,1,3,2)=(3,2,2,1)$. Throughout the text, $I$ will denote an $\sym_n$-invariant monomial ideal in $R$. For such $I$, we let 
$$P(I)=\{\lambda \in P_n \mid x^\lambda \in I\},$$
and think of $P(I)$ informally as the set of partitions in $I$. For partitions $\lambda^1,\lambda^2,\dots,\lambda^r \in P_n$, we define
\begin{equation}\label{eq:explicit-sym-invariant-I}
\textstyle 
\langle \lambda^1,\dots,\lambda^r\rangle_{\sym_n}
= \sum_{k=1}^r (\sigma(x^{\lambda^k}) \mid \sigma \in \sym_n) \subset \kk [x_1,\dots,x_n]
\end{equation}
and call it the $\sym_n$-invariant monomial ideal generated by $\lambda^1,\dots,\lambda^r$. For instance, we have
\begin{equation}\label{eq:I-411-520}
\begin{aligned}
I&=\langle (4,1,1),(5,2,0)\rangle_{\sym_3}\\&=(x_1^4x_2x_3,x_1x_2^4x_3,x_1x_2x_3^4,x_1^5x_2^2,
x_1^5x_3^2,x_1^2x_2^5,x_1^2x_3^5,x_2^5x_3^2,
x_2^2x_3^5).
\end{aligned}
\end{equation}
If $M$ is a $\ZZ^n$-graded $R$-module and $\bma\in \ZZ^n$, then $M_{\bma}$ denotes the $\bma$-th graded component of $M$. Let
$$M_{\langle \lambda \rangle} = \bigoplus_{\substack{\bma \in \ZZ^n \\ \mathrm{part}(\bma)=\lambda}} M_{\bm a}$$
for $\lambda\in P_n$. We note that $\Tor_i(I,\kk)_{\langle \lambda \rangle}$ is fixed by the $\sym_n$-action, so we have a decomposition
$$\Tor_i(I,\kk)= \bigoplus_{\lambda \in P_n} \Tor_i(I,\kk)_{\langle \lambda \rangle}$$
as $\sym_n$-modules. Therefore, we may focus on the $\sym_n$-module structure of each $\Tor_i(I,\kk)_{\langle \lambda \rangle}$.

It will be convenient to use the abbreviation 
$$(a_1^{p_1},a_2^{p_2},\dots,a_s^{p_s})=(a_1,\dots,a_1,a_2,\dots,a_2,\dots,a_s,\dots,a_s)$$
where each $a_k$ appears $p_k$ times on the right side, and to identify each partition $\mu$ with its Young diagram. For example, $(2^2,1)$ will be identified with $\ydiagram{2,2,1}$. For a partition
\begin{align}
\label{num}
\mu=(d_1^{p_1},\dots,d_s^{p_s},0^{p_{s+1}}) \in P_n,
\end{align}
where $d_1> \cdots >d_s >0$, $p_1,\dots,p_s>0$ and $p_{s+1} \geq 0$, we define
\begin{equation}\label{eq:def-p-mu}
\bm p(\mu)=(p_1-1,\dots,p_s-1)
\ \ \mbox{ and } \ \  s(\mu)=s.
\end{equation}
For a vector $\bmc=(c_1,\dots, c_s) \in \ZZ_{\geq 0}^s $ with $\bm c \leq (p_1,\dots,p_s)$,
we define
\begin{equation}\label{eq:mu-minus-c}
\mu\setminus \bmc= (d_1^{p_1-c_1},(d_1-1)^{c_1},d_2^{p_2-c_2},(d_2-1)^{c_2},\dots,d_s^{p_s-c_s},(d_s-1)^{c_s},0^{p_{s+1}}) \in P_n.
\end{equation}

\begin{example}\label{ex:pmu-mu-c}
Suppose that $\mu=(5^2,3^2,2^2)$ and $n=6$. We have $s=3$, $p_1=p_2=p_3=2$, and $p_4=0$, hence $\bm p(\mu)=(1,1,1)$. If we let $\bmc=(1,2,1)$ then
\[(5^2,3^2,2^2)\setminus (1,2,1)=(5,4,2^3,1).\]
As illustrated below, we can think of $(5,4,2^3,1)$ as being obtained from $(5^2,3^2,2^2)$ by removing one box from the fifth column, two boxes from the third column, and one box from the second column:
\ytableausetup{smalltableaux}
$$
\begin{array}{c}
\ydiagram{5,5,3,3,2,2} \\[25pt]
(5^2,3^2,2^2)
\end{array}
\hskip 30pt 
\longrightarrow
\hskip 30pt 
\begin{array}{c}
\begin{ytableau} {} &{} & {} & {} & {} \\
{} & {} & {} & {} & \none[ \times ]\\
{} & {} & \none[ \times ]\\
{} & {} & \none[ \times ]\\
{} & {} \\
{} & \none[ \times ]
\end{ytableau} \\[25pt]
(5,4,2^3,1)
\end{array}
$$
\end{example}

We let $\ee_1,\dots,\ee_s$ be the standard vectors of $\ZZ^s$ and write $\ee_F=\sum_{i \in F} {\ee}_i$ for $F \subset [s]=\{1,\dots,s\}$.
For $\bm c \leq \bm p(\mu)$, we define the simplicial complex (see Section~\ref{subsec:simpl-cpx} for some background)
\begin{equation}\label{eq:def-Del-mu-c-I}
\Delta^{\mu,\bm c}(I)=\{ F \subset [s] \mid \mu \setminus (\bm c + \ee_F) \in P(I)\},
\end{equation}
and define the numbers $\gamma_i^{\mu,\bm c}(I)$ by
\[
\gamma_i^{\mu,\bm c}(I)= \dim_\kk \left(\widetilde H_{i-1} (\Delta^{\mu,\bm c}(I))\right),
\]
where $\widetilde H_\bullet(\Delta)$ denote the reduced homology groups of a simplicial complex $\Delta$, with coefficients in $\kk$.

\begin{example}\label{ex:D-521-000}
 Let $I$ be as in \eqref{eq:I-411-520}, let $\mu=(5,2,1)$ and let $\bmc=(0,0,0)$. The simplicial complex $\Delta^{\mu,\bm c}(I)$ can be identified with the intersection of the interval $[\mu\setminus \bm c ,\mu \setminus (\bm c + \ee_{[s]}) ]=[(5,2,1),(4,1,0)]$ with $P(I)$ in the poset $P_n$ (with the reversed order). The faces of the complex and the corresponding partitions are colored in red and are pictured below.
\[
\ytableausetup{boxsize=0.5em}
\begin{array}{c}
\xymatrix{
& *!D{\{1,2,3\}} \ar@{-}[dr] \ar@{-}[d] \ar@{-}[dl]& \\
*+<5pt>!D{\color{red}\{1,2\}} \ar@{-}[dr]\ar@{-}[d] & *+<5pt>!D{\{1,3\}} \ar@{-}[dr]\ar@{-}[dl] & *+<5pt>!D{\{2,3\}} \ar@{-}[dl]\ar@{-}[d]\\ 
*+<5pt>!U{\color{red}\{1\}} & *+<5pt>!U{\color{red}\{2\}} & *+<5pt>!U{\color{red}\{3\}} \\ 
& *+<5pt>!U{\color{red}\varnothing} \ar@{-}[ul] \ar@{-}[u] \ar@{-}[ur]& \\
}
\end{array}
\qquad\qquad\qquad
\begin{array}{c}
\xymatrix{
& *!D{\ydiagram[*(gray)] {4+0,1+0,0+0}*{4,1}} \ar@{-}[dr] \ar@{-}[d] \ar@{-}[dl]& \\
*+<5pt>!D{\color{red}\ydiagram[*(gray)] {4+0,1+0,0+1}*{4,1}} \ar@{-}[dr]\ar@{-}[d] & *+<5pt>!D{\ydiagram[*(gray)] {4+0,1+1,0+0}*{4,1}} \ar@{-}[dr]\ar@{-}[dl] & *+<5pt>!D{\ydiagram[*(gray)] {4+1,1+0,0+0}*{4,1}} \ar@{-}[dl]\ar@{-}[d]\\ 
*+<5pt>!U{\color{red}\ydiagram[*(gray)] {4+0,1+1,0+1}*{4,1}} & *+<5pt>!U{\color{red}\ydiagram[*(gray)] {4+1,1+0,0+1}*{4,1}} & *+<5pt>!U{\color{red}\ydiagram[*(gray)] {4+1,1+1,0+0}*{4,1}} \\ 
& *+<5pt>!U{\color{red}\ydiagram[*(gray)] {4+1,1+1,0+1}*{4,1}} \ar@{-}[ul] \ar@{-}[u] \ar@{-}[ur]& \\
}
\end{array}
\]
It follows that
\[\Delta^{\mu,\bm c}(I)= \big\{\{1,2\},\{1\},\{2\},\{3\},\varnothing\big\},\]
whose only non-vanishing reduced homology group is $\widetilde H_{0} (\Delta^{\mu,\bm c}(I))$, of dimension $\gamma_1^{\mu,\bm c}(I)=1$.
\end{example}

To introduce the final ingredient of our main result, we let $S^\lambda$ denote the Specht module associated with a partition $\lambda$ (see Section~\ref{subsec:Specht} for some background), which is a module over $\sym_{|\lambda|}$. We say that $\lambda$ is a \defi{hook partition} if $\lambda_i\leq 1$ for $i>1$ (the terminology is suggestive of the shape of the Young diagram of $\lambda$). For a sequence 
$$\pi=((p_1,1^{q_1}),\dots,(p_r,1^{q_r}))$$ of hook partitions with $\sum_{k=1}^r (p_k+q_k)=n$,
we write
\begin{align}
\label{eq:mcS-pi}
\mathcal S^\pi
= 
\mathrm{Ind}^{\sym_n}_{\sym_{p_1+q_1} \times \cdots \times \sym_{p_r+q_r}} \left(
S^{(p_1,1^{q_1})}\boxtimes \cdots \boxtimes S^{(p_r,1^{q_r})}
\right),
\end{align}
where $\boxtimes$ denotes the (external) tensor product of representations, and $\mathrm{Ind}$ denotes the induced representation.
We are now ready to state our main theorem.

\begin{theorem}
\label{thm:main}
Let $\mu=(d_1^{p_1},\dots,d_s^{p_s},0^{p_{s+1}}) \in P_n$ with $d_1>\cdots >d_s>0$ and let
$I$ be an $\sym_n$-invariant monomial ideal of $R$.
We have an isomorphism of $\kk$-vector spaces
{\large
\begin{equation}\label{eq:main-isomorphism}
\Tor_i(I,\kk)_{\langle \mu \rangle}
\cong \bigoplus_{\bm 0 \leq \bm c=(c_1,\dots,c_s) \leq \bm p(\mu)}
\left(\mathcal S^{( (p_1-c_1,1^{c_1}),\dots,(p_s-c_s,1^{c_s}),(p_{s+1}))}\right)^{\gamma_{i-|\bm c|}^{\mu,\bm c}(I)}.
\end{equation}
}Moreover, if $\mathrm{char}(\kk)=0$ or $\mathrm{char}(\kk)>n$, then \eqref{eq:main-isomorphism} is an isomorphism of $\sym_n$-modules.
\end{theorem}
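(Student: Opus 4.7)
The plan is to compute $\Tor_i(I,\kk)=H_i(I\otimes K_\bullet)$ via the Koszul complex $K_\bullet=K(x_1,\dots,x_n;R)$, an $\sym_n$-equivariant free resolution of $\kk$. Writing $H=\sym_{p_1}\times\cdots\times\sym_{p_s}\times\sym_{p_{s+1}}$ for the stabilizer of $\mu$ in $\sym_n$, we have $\Tor_i(I,\kk)_{\langle\mu\rangle}\cong\mathrm{Ind}^{\sym_n}_H \Tor_i(I,\kk)_\mu$, so the task reduces to determining $\Tor_i(I,\kk)_\mu$ as an $H$-module; note that $\sym_{p_{s+1}}$ acts trivially, contributing the factor $S^{(p_{s+1})}$ upon induction.

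Let $V_k$ denote the permutation representation of $\sym_{p_k}$ on the block $B_k\subset[n]$ of coordinates where $\mu$ equals $d_k$. The Koszul chain complex in multidegree $\mu$ takes the form
\[(I\otimes K_\bullet)_{k,\mu}\;=\;\bigoplus_{|\bm{c}|=k,\ \mu\setminus\bm{c}\in P(I)}\ \bigwedge\nolimits^{c_1}V_1\otimes\cdots\otimes\bigwedge\nolimits^{c_s}V_s,\]
with differential contracting against $\sigma=\sum_{i=1}^n e_i^*$. Under the hypothesis $\mathrm{char}(\kk)=0$ or $>n$, each $V_k$ splits as $\mathrm{triv}_k\oplus\mathrm{std}_k$, and since $\sigma_k$ vanishes on $\mathrm{std}_k$, the Koszul differential only interacts with the $\mathrm{triv}_k$ factors. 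Using the decomposition $\bigwedge^{c_k}V_k=\bigwedge^{c_k}\mathrm{std}_k\oplus \mathbf{1}_k\wedge\bigwedge^{c_k-1}\mathrm{std}_k$ and reparametrizing $\bm{c}=\bm{d}+\ee_F$, where $\bm{d}\leq\bm{p}(\mu)$ records the pure-$\mathrm{std}$ exponents and $F\subset[s]$ records which $\mathbf{1}_k$'s appear, the condition $\mu\setminus\bm{c}\in P(I)$ becomes precisely $F\in\Delta^{\mu,\bm{d}}(I)$. After harmlessly rescaling each $\mathbf{1}_k$ by $p_k^{-1}$ to normalize the weighted boundary into the standard simplicial boundary, the chain complex decomposes $H$-equivariantly as
\[(I\otimes K_\bullet)_{\bullet,\mu}\;\cong\;\bigoplus_{\bm{d}\leq\bm{p}(\mu)}\bigl(\bigwedge\nolimits^{d_1}\mathrm{std}_1\boxtimes\cdots\boxtimes\bigwedge\nolimits^{d_s}\mathrm{std}_s\bigr)\otimes\widetilde C_{\bullet-|\bm{d}|-1}(\Delta^{\mu,\bm{d}}(I)),\]
where $\widetilde C_\bullet$ denotes the augmented simplicial chain complex. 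Passing to homology and using the identification $\bigwedge^c\mathrm{std}_k\cong S^{(p_k-c,1^c)}$ for $0\leq c\leq p_k-1$ then yields \eqref{eq:main-isomorphism} as $\sym_n$-modules.

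The main obstacle is establishing the $\kk$-vector space isomorphism in arbitrary characteristic. When $\mathrm{char}(\kk)\mid p_k$ for some $k$, the splitting $V_k=\mathrm{triv}_k\oplus\mathrm{std}_k$ fails and the $H$-equivariant decomposition above breaks down. Nevertheless, the required numerical identity between $\dim_\kk\Tor_i(I,\kk)_{\langle\mu\rangle}$ and the right-hand side of \eqref{eq:main-isomorphism} is a chain-level statement, which can be extracted from Hochster's formula $\dim_\kk\Tor_i(I,\kk)_\mu=\dim_\kk\widetilde H_{i-1}(K_\mu(I))$ by analyzing a filtration of the upper Koszul simplicial complex $K_\mu(I)$ according to the block structure $B_1\sqcup\cdots\sqcup B_s$; the associated pages relate $\widetilde H_\bullet(K_\mu(I))$ to the $\widetilde H_\bullet(\Delta^{\mu,\bm{c}}(I))$ with multiplicities $\prod_k\binom{p_k-1}{c_k}=\prod_k\dim_\kk S^{(p_k-c_k,1^{c_k})}$, matching the dimension count of \eqref{eq:main-isomorphism}.
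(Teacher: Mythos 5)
Your argument for the semisimple case (characteristic $0$ or $>n$) is essentially correct, and it takes a genuinely different route from the paper. You split each permutation module $V_k$ as $\mathrm{triv}_k\oplus\mathrm{std}_k$, observe that the Koszul contraction only touches the trivial summands, and thereby decompose the strand $(I\otimes K_\bullet)_{\bm\mu}$ directly into pieces $\bigwedge^{\bmc}\mathrm{std}\otimes\widetilde C_{\bullet-|\bmc|-1}(\Delta^{\mu,\bmc}(I))$ before inducing from the stabilizer $H$ to $\sym_n$; the bookkeeping ($\bmc=\bmd+\ee_F$, the condition $\mu\setminus(\bmd+\ee_F)\in P(I)$ matching $F\in\Delta^{\mu,\bmd}(I)$, the rescaling by $p_k^{-1}$, and $\mathrm{Ind}_H^{\sym_n}\Tor_i(I,\kk)_{\bm\mu}\cong\Tor_i(I,\kk)_{\langle\mu\rangle}$) all checks out. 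The paper instead never splits $V_k$: it filters each block factor of the Koszul strand by canonical truncations, identifies the composition factors as Boolean $\ZZ^s$-complexes built from hook Specht modules, and uses that Boolean complexes are projective and injective objects in the category of $\ZZ^s$-complexes (equivalently, free modules over the exterior algebra $\mf{E}$) to split the filtration. What your approach buys is a cleaner, more classical derivation of the equivariant statement; what the paper's approach buys is precisely the characteristic-free splitting, which is the content of the first assertion of the theorem.

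That is where your proposal has a genuine gap: the theorem asserts the vector-space isomorphism \eqref{eq:main-isomorphism} over an arbitrary field, and your treatment of this case is only a gesture. When $\mathrm{char}(\kk)\mid p_k$ the splitting $V_k=\mathrm{triv}_k\oplus\mathrm{std}_k$ fails, and you propose to recover the dimension identity by ``analyzing a filtration of the upper Koszul simplicial complex according to the block structure,'' asserting that ``the associated pages relate'' $\widetilde H_\bullet(\Delta^I_{\bm\mu})$ to the $\widetilde H_\bullet(\Delta^{\mu,\bmc}(I))$ with multiplicities $\prod_k\binom{p_k-1}{c_k}$. A filtration of a chain complex only yields a spectral sequence, and in general this gives an inequality $\dim H_i(\text{total})\leq\dim H_i(\gr)$, with equality exactly when the sequence degenerates (equivalently, when the filtration splits at the chain level). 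You give no argument for degeneration, and the asserted relation ``with multiplicities $\prod_k\binom{p_k-1}{c_k}$'' is essentially the statement to be proved, so as written the step is circular. The missing ingredient is the reason the filtration splits non-equivariantly in every characteristic: in the paper this is Corollary~\ref{cor:filtrations}, resting on the observation (Example~\ref{ex:boolean}) that the composition factors are Boolean complexes, i.e.\ free $\bb{Z}^s$-graded $\mf{E}$-modules, hence projective and injective in $\com_s$; one also needs the compatibility of the resulting direct-sum decomposition with the $\bb{Z}^s$-grading, so that the subcomplex cut out by $I$ (which in each multidegree $\bmb$ is either all of $K^{\mu}_{\bmb}$ or zero) decomposes accordingly. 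To complete your proof you would need to supply this splitting argument, or some other explicit degeneration/homotopy-equivalence argument valid in all characteristics.
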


In fact, our proof will show that (in arbitrary characteristic) $\Tor_i(I,\kk)_{\langle \mu \rangle}$ has a filtration by $\sym_n$-submodules, whose associated graded is isomorphic to the right side of \eqref{eq:main-isomorphism}. Since representations of $\sym_n$ are semisimple if $\mathrm{char}(\kk)=0$ or $\mathrm{char}(\kk)>n$, it follows that \eqref{eq:main-isomorphism} is an isomorphism of $\sym_n$-modules in these cases (we note that a similar issue arises in the calculation of $\Ext$ modules in \cite[Main Theorem]{Ra}). Theorem~\ref{thm:main} clarifies our earlier assertion that the building blocks of $\Tor_i(I,\kk)$ are induced representations of tensor products of Specht modules, and that their multiplicities are topological Betti numbers. We now illustrate Theorem~\ref{thm:main} with an example.

\begin{example}
\label{ex:1.2}
If $I$ is as in \eqref{eq:I-411-520} then using Macaulay2 one can see that the Betti table of $I$ is{\small
$$
\begin{matrix}
&0&1&2\\
\text{total:}&9&12&4\\
\text{6:}&3&\text{.}&\text{.}\\
\text{7:}&6&6&\text{.}\\
\text{8:}&\text{.}&3&\text{.}\\
\text{9:}&\text{.}&3&3\\
\text{10:}&\text{.}&\text{.}&1
\end{matrix}
$$
}
and that
\begin{align*}
\Tor_0(I,\kk) &= \Tor_0(I,\kk)_{\langle (4,1,1)\rangle} \oplus \Tor_0(I,\kk)_{\langle (5,2,0)\rangle},\\
\Tor_1(I,\kk) &= \Tor_1(I,\kk)_{\langle (4,4,1)\rangle} \oplus \Tor_1(I,\kk)_{\langle (5,2,1)\rangle} \oplus \Tor_1(I,\kk)_{\langle (5,5,0)\rangle},
\\
\Tor_2(I,\kk) &= \Tor_2(I,\kk)_{\langle (4,4,4)\rangle} \oplus \Tor_2(I,\kk)_{\langle (5,5,1)\rangle}.
\end{align*}
To identify the $\sym_3$-module structure, one first computes the relevant complexes $\Delta^{\mu,\bmc}(I)$:
{\small
\begin{align*}
&\Delta^{(4,1,1),(0,0)}(I)=
\Delta^{(5,2,0),(0,0)}(I)= \{\varnothing\},\\
&
\Delta^{(4,4,1),(1,0)}(I)=
\Delta^{(5,5,0),(1)}(I)=
\{\varnothing\},\ \
\Delta^{(5,2,1),(0,0,0)}= \big\{\{1,2\},\{1\},\{2\},\{3\},\varnothing\big\},\\
&\Delta^{(4,4,4),(2)}(I)= \{\varnothing\} \mbox{ and }
\Delta^{(5,5,1),(1,0)}= \big\{\{1\},\{2\},\varnothing\big\}.
\end{align*}
}
We leave the details of this calculation to the reader, noting that the description of $\Delta^{(5,2,1),(0,0,0)}(I)$ was explained in Example~\ref{ex:D-521-000}. We then have
\begin{align*}
& \gamma_0^{(4,1,1),(0,0)}(I)=
\gamma_0^{(5,2,0),(0,0)}(I)=1,\\
& \gamma_0^{(4,4,1),(1,0)}(I)=
\gamma_1^{(5,2,1),(0,0,0)}(I)=
\gamma_0^{(5,5,0),(1)}(I)=1,\mbox{ and }\\
& \gamma_0^{(4,4,4),(2)}(I)=
\gamma_1^{(5,5,1),(1,0)}(I)=1.
\end{align*}
Theorem \ref{thm:main} implies based on these computations that we have
\ytableausetup{boxsize=0.3em}
\begin{align*}
&
\Tor_0(I,\kk)_{\langle (4,1,1)\rangle} \cong \mathrm{Ind}_{\sym_1\times \sym_2}^{\sym_3} \left (S^{\ydiagram 1}\boxtimes S^{\ydiagram 2} \right),\\
&\Tor_0(I,\kk)_{\langle (5,2,0)\rangle} \cong \mathrm{Ind}_{\sym_1 \times \sym_1 \times \sym_1}^{\sym_3} \left (S^{\ydiagram 1}\boxtimes S^{\ydiagram 1}\boxtimes S^{\ydiagram 1} \right),\\
&\Tor_1(I,\kk)_{\langle (4,4,1)\rangle} \cong \mathrm{Ind}_{\sym_2\times \sym_1}^{\sym_3} \left (S^{\ydiagram {1,1}}\boxtimes S^{\ydiagram 1} \right),\\
&\Tor_1(I,\kk)_{\langle (5,2,1)\rangle} \cong \mathrm{Ind}_{\sym_1\times \sym_1\times \sym_1}^{\sym_3} \left (S^{\ydiagram 1}\boxtimes S^{\ydiagram 1}\boxtimes S^{\ydiagram 1} \right),\\
&\Tor_1(I,\kk)_{\langle (5,5,0)\rangle} \cong \mathrm{Ind}_{\sym_2\times \sym_1}^{\sym_3} \left (S^{\ydiagram {1,1}}\boxtimes S^{\ydiagram 1}\right),\\
&\Tor_2(I,\kk)_{\langle (4,4,4)\rangle} \cong S^{\ydiagram {1,1,1}},\\
&\Tor_2(I,\kk)_{\langle (5,5,1)\rangle} \cong \mathrm{Ind}_{\sym_2\times \sym_1}^{\sym_3} \left (S^{\ydiagram {1,1}}\boxtimes S^{\ydiagram 1}\right).
\end{align*}
\end{example}

As a first application of Theorem~\ref{thm:main}, we explain how to determine the $\sym_n$-invariant part of $\Tor_i(I,\kk)$. We express $I$ as in \eqref{eq:explicit-sym-invariant-I}, and define the \defi{unsymmetrization} of $I$ to be the ideal
\[ J = (x^{\lambda^1},\cdots,x^{\lambda^r}) \subseteq R.\]
The relationship between the $\Tor$ groups of $I$ and $J$ is given by the following.

\begin{theorem}\label{thm:invariant-intro}
If $\mathrm{char}(\kk)=0$ or $\mathrm{char}(\kk)>n$, then for partitions $\lambda^1,\cdots,\lambda^r\in P_n$ we have
\[
    \Tor_i \big(\langle \lambda^1,\cdots, \lambda^r \rangle_{\sym_n},\kk \big)^{\sym_n} \cong \Tor_i\big((x^{\lambda^1},\cdots,x^{\lambda^r}),\kk\big) \ \mbox{ for all }i.
\]
\end{theorem}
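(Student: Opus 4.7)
The plan is to derive Theorem~\ref{thm:invariant-intro} from Theorem~\ref{thm:main} by taking $\sym_n$-invariants on the right-hand side of~\eqref{eq:main-isomorphism}, and matching the result against a direct calculation of $\Tor_i(J,\kk)$ via Hochster's formula for monomial ideals.

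The first step is to compute $(\mathcal S^\pi)^{\sym_n}$: by Frobenius reciprocity,
\[
(\mathcal S^\pi)^{\sym_n}=\Hom_{\sym_{p_1+q_1}\times\cdots\times\sym_{p_r+q_r}}\!\Big(S^{(p_1+q_1)}\boxtimes\cdots\boxtimes S^{(p_r+q_r)},\, S^{(p_1,1^{q_1})}\boxtimes\cdots\boxtimes S^{(p_r,1^{q_r})}\Big),
\]
and (in characteristic $0$ or $>n$) each factor $\Hom_{\sym_{p_k+q_k}}(S^{(p_k+q_k)},S^{(p_k,1^{q_k})})$ equals $\kk$ if $q_k=0$ and $0$ otherwise, because the Specht modules involved are distinct irreducibles and $S^{(m)}$ is the trivial representation. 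Applied to every summand in~\eqref{eq:main-isomorphism}, only the $\bm c=\bm 0$ terms survive, each contributing one copy of $\kk$, so
\[
\Tor_i(I,\kk)_{\langle\mu\rangle}^{\sym_n}\cong\kk^{\gamma_i^{\mu,\bm 0}(I)}\cong\widetilde H_{i-1}(\Delta^{\mu,\bm 0}(I);\kk).
\]

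On the other side, the multigraded Hochster formula gives $\beta_{i,\bm a}(J)=\dim\widetilde H_{i-1}(K^{\bm a}(J);\kk)$, where $K^{\bm a}(J)=\{G\subseteq[n]:x^{\bm a-\ee_G}\in J\}$ is the upper Koszul simplicial complex (with the usual convention that all coordinates of $\bm a-\ee_G$ must be nonnegative, and $\ee_G=\sum_{j\in G}\ee_j\in\ZZ^n$). Since every generator $x^{\lambda^k}$ of $J$ has exponent vector a partition, every least common multiple of a subset of generators is again a partition, and so $\beta_{i,\bm a}(J)=0$ unless $\bm a\in P_n$. The theorem therefore reduces, for every $\mu\in P_n$, to an isomorphism $\widetilde H_{i-1}(K^\mu(J);\kk)\cong \widetilde H_{i-1}(\Delta^{\mu,\bm 0}(I);\kk)$.

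The geometric heart of the argument is to establish this last isomorphism by iteratively folding $K^\mu(J)$ onto the subcomplex supported on the last positions of the blocks of $\mu$. Writing $\mathrm{block}_i=\{j:\mu_j=d_i\}$ and $j_i^\ast=\max(\mathrm{block}_i)$, the key claim is that for every $j\in\mathrm{block}_i\setminus\{j_i^\ast\}$ and every face $G\in K^\mu(J)$ containing $j$, the set $G\cup\{j_i^\ast\}$ still lies in $K^\mu(J)$; indeed, any $\lambda^k\leq\mu-\ee_G$ satisfies $\lambda^k_{j_i^\ast}\leq \lambda^k_j\leq d_i-1$ (because $\lambda^k$ is a partition and $j<j_i^\ast$). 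Under this ``prism'' condition, the standard fold lemma produces a deformation retract of $K^\mu(J)$ onto the subcomplex obtained by deleting the vertex $j$: one linearly slides $j$ into $j_i^\ast$ inside the prisms $G\cup\{j_i^\ast\}\in K^\mu(J)$. Iterating over all non-last vertices eventually retracts $K^\mu(J)$ onto its full subcomplex on $\{j_1^\ast,\dots,j_s^\ast\}$; under the bijection $j_i^\ast\leftrightarrow i$ this subcomplex coincides with $\Delta^{\mu,\bm 0}(I)$, thanks to the observation that for any partition $\nu\in P_n$ one has $x^\nu\in J$ iff $x^\nu\in I$ (for partitions the coordinatewise and ``up-to-permutation'' orders agree). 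The main obstacle is the iterative step: one must check that the prism hypothesis is preserved under previously performed folds, so that the folds can be chained into a single deformation retract. The characteristic hypothesis is needed only in the first step, to invoke Theorem~\ref{thm:main} as a genuine $\sym_n$-module isomorphism and to guarantee exactness of the $\sym_n$-invariants functor.
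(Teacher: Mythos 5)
Your proposal is correct, and while the representation-theoretic half coincides with the paper's (you use Frobenius reciprocity where the paper invokes the Littlewood--Richardson rule, but the content --- only the $\bm c=\bm 0$ summands of \eqref{eq:main-isomorphism} have invariants, each contributing $\kk^{\gamma_i^{\mu,\bm 0}(I)}$ --- is identical, as is the Taylor-resolution observation that $\Tor_i(J,\kk)$ is concentrated in multidegrees lying in $P_n$), the topological half takes a genuinely different route. The paper compares $\Delta^{\mu,\bm 0}(I)$ with the lcm-complex $X_{<\mu}=\{G\subseteq[r]\mid \lcm(G)<\mu\}$ of Bayer--Sturmfels/Gasharov--Peeva--Welker, indexed on the \emph{generators} of $J$, and shows via the Nerve Theorem that $X_{<\mu}$ is homotopy equivalent to the nerve of its facets $G_1,\dots,G_s$, which is exactly $\Delta^{\mu,\bm 0}(I)$; you instead work with the upper Koszul complex $\Delta^J_{\bm\mu}$ from the paper's Section 2.3, indexed on the \emph{variables}, and retract it onto the induced subcomplex on the last vertex of each block of $\mu$, which you correctly identify with $\Delta^{\mu,\bm 0}(I)$ using the fact that for a partition $\nu$ one has $x^\nu\in J$ iff $x^\nu\in I$ (the same fact underlying \eqref{7-1}). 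Your domination claim is right: if $j\in\mathrm{block}_i\setminus\{j_i^\ast\}$ lies in a face $G$, then any $\lambda^k\le\mu-\eee_G$ has $\lambda^k_{j_i^\ast}\le\lambda^k_j\le d_i-1$, so $G\cup\{j_i^\ast\}$ is again a face, i.e.\ $\lk(j)$ is a cone with apex $j_i^\ast$ and deleting $j$ is a deformation retract (indeed a collapse). The iterative step you flag as the main obstacle is in fact immediate and is not a gap: deleting a dominated vertex yields the induced subcomplex on the remaining vertices, so after removing a set $D$ of non-last vertices every face containing $j$ is a face of $\Delta^J_{\bm\mu}$ avoiding $D$, and $G\cup\{j_i^\ast\}$ still avoids $D$ because the apexes $j_1^\ast,\dots,j_s^\ast$ are never deleted; hence the folds chain without further argument. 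The trade-off: your argument stays entirely within the Koszul-complex formula already quoted in the paper and needs only the standard dominated-vertex (fold) lemma, while the paper's argument needs the lcm-lattice description of $\Tor(J,\kk)$ plus the Nerve Theorem but avoids any induction on vertex deletions; both establish the characteristic-free equality \eqref{eq:6.1-1} and then use the semisimplicity hypothesis only to pass from Theorem~\ref{thm:main} to the invariants.
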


The proof of Theorem~\ref{thm:invariant-intro} is explained in Section~\ref{sec:sn-inv-Betti} as an application of Theorem~\ref{thm:main} and the Nerve Theorem. Here, we illustrate Theorem~\ref{thm:invariant-intro} with an example.

\begin{example}
If $I$ is as in \eqref{eq:I-411-520}, then its unsymmetrization is the ideal $J=(x_1^4x_2x_3,x_1^5x_2^2)$. Since the generators of $J$ have a unique syzygy (coming from their lcm $x_1^5x_2^2x_3$), we get
$$\Tor_0(J,\kk)=\Tor_0(J,\kk)_{(4,1,1)}\oplus \Tor_0(J,\kk)_{(5,2,0)} \cong \kk \oplus \kk$$
and
$$\Tor_1(J,\kk)=\Tor_1(J,\kk)_{(5,2,1)} \cong \kk. $$
Theorem \ref{thm:invariantpart} implies that
$$\Tor_0(I,\kk)^{\sym_3}=\Tor_0(I,\kk)_{\langle(4,1,1)\rangle}^{\sym_3} \oplus \Tor_0(I,\kk)_{\langle (5,2,0)\rangle }^{\sym_3} \cong \kk \oplus \kk$$
and
$$\Tor_1(I,\kk)^{\sym_3}=\Tor_1(I,\kk)^{\sym_3}_{\langle (5,2,1)\rangle} \cong \kk,$$
which can be checked (based on Pieri's rule) using the computations in Example \ref{ex:1.2}: the trivial $\sym_3$-module $S^{\ydiagram {3}}$ appears as a summand in $\mc{S}^{\pi}$ if and only if each of the partitions in $\pi$ has a single row (in which case the multiplicity of $S^{\ydiagram {3}}$ is one).
\end{example}

\begin{remark}
Theorem \ref{thm:invariantpart} implies that if $\langle \lambda^1,\dots,\lambda^r\rangle_{\sym_n}$ has a linear resolution, then so does $(x^{\lambda^1},\dots,x^{\lambda^r})$.
A combinatorial characterization of $\sym_n$-invariant monomial ideals having a linear resolution was given in \cite{Ra}. These are exactly the symmetric shifted ideals (generated in a single degree) defined in \cite{BDAG}.
It follows that the unsymmetrizations of these ideals have a linear resolution.
\end{remark}

Theorem~\ref{thm:main} gives a concrete recipe for computing the multigraded Betti numbers
\[\beta_{i,\bma}(I) = \dim\Tor_i(I,\kk)_{\bma},\]
but in practice, the difficulty of the calculation depends on the complexity of evaluating the homology of~$\Delta^{\mu,\bmc}$. As shown by example in \cite[Section~5]{Mu}, the numbers $\beta_{i,\bma}(I)$ may depend on the characteristic of $\kk$. However, the shape of the Betti table, as measured by the Castelnuovo--Mumford regularity $\reg(I)$, and by the projective dimension $\pdim(I)$, does not depend on $\mathrm{char}(\kk)$! This was first shown in \cite{Ra}, and in Section~\ref{sec:prim-dec-extr-Betti} we give an equivalent (but somewhat simpler) recipe for computing $\reg(I)$ and $\pdim(I)$. We also extend the notion of \defi{extremal Betti numbers} from \cite{BCP} to our context, and compute the extremal Betti numbers of $I$ in Theorem~\ref{thm:extremal}.

The results of our work relate to the broader context of the study of finiteness properties of ideals in an infinite polynomial ring, which are invariant under a large group of symmetries. A significant body of research has been performed in recent years on finite generation statements, most often under the designation \emph{Noetherianity up to symmetry} or \emph{representation stability}, and has had important applications including two (of several) recent proofs of Stillman's conjecture on the projective dimension (and regularity) of polynomial ideals \cite{DLL,ESS}. In the case of the infinite polynomial ring $R_{\infty} = \kk[x_1,x_2,\cdots]$, with the action of the infinite symmetric group $\sym_{\infty}$ by coordinate permutations, ideals $I_{\infty}\subseteq R_{\infty}$ that are $\sym_{\infty}$-invariant are generated by finitely many $\sym_{\infty}$-orbits (this is a classical result due to Cohen \cite{Cohen}, rediscovered more recently in \cite{AH-finite,HS-finite}). It is then natural to explore finiteness beyond the set of generators, and to understand how it is reflected in other homological invariants. To that end, we let $f_1,\dots,f_r \in \kk[x_1,\dots,x_n]\subset R_{\infty}$ be polynomials whose $\sym_{\infty}$-orbits generate $I_{\infty}$,
and consider the sequence of ideals
\begin{equation}\label{eq:def-Im}
I_m=(\sigma(f_i) \mid 1 \leq i \leq r,\ \sigma \in \sym_m) \subseteq \kk[x_1,\dots,x_m]\text{ for }m\geq n.
\end{equation}
One can guess that the finiteness properties of $I_{\infty}$ are reflected by uniform behaviors of homological invariants of the ideals $I_m$. For instance, it is shown in \cite[Corollary~3.12]{DNNR-codim} that the (co)dimension of the ideals $I_m$ is computed by a linear function when $m\gg 0$, and it is conjectured in \cite[Conjecture~1.3]{DNNR-codim} that the same result is true for the projective dimension $\pdim(I_m)$. Similarly, it is conjectured in \cite[Conjecture~1.1]{DNNR-reg} that $\reg(I_m)$ is a linear function when $m\gg 0$. In the case when $f_1,\cdots,f_r$ are monomials, the linearity of $\pdim(I_m)$ and $\reg(I_m)$ is established by the authors in \cite[Corollary~1.2]{Mu} and \cite[Theorem~6.1]{Ra}. In Section~\ref{sec:vary-vars}, we extend these results to each of the Betti numbers of the ideals $I_m$, by providing a concrete recipe of how these numbers change as we vary $m$. Exhibiting a uniform behavior for the Betti numbers of $I_m$, when $f_1,\cdots,f_r$ are no longer assumed to be monomials, remains a significant open problem, and we hope that our work will inspire further investigations in this direction.

\bigskip

\noindent{\bf Organization.}
In Section~\ref{sec:prelim} we introduce the necessary notation and preliminary results regarding partitions, simplicial complexes, Betti numbers, Specht modules, and multidimensional chain complexes. In Section~\ref{sec:main-thm} we prove Theorem~\ref{thm:main}, and in Section~\ref{sec:sn-inv-Betti} we explain the proof of Theorem~\ref{thm:invariant-intro}.
In Section~\ref{sec:prim-dec-extr-Betti} we discuss the primary decomposition, extremal Betti numbers, regularity and projective dimension of $\sym_n$-invariant monomial ideals. Finally, in Section~\ref{sec:vary-vars} we explain how the Betti numbers change as we increase the number of variables.

\section{Preliminaries}
\label{sec:prelim}
In this section, we introduce some basic notation which will be used in the paper.

\subsection{Some remarks on partitions and multidegrees}

Let $I$ be an $\sym_n$-invariant monomial ideal of $R$.
Recall that $P(I)=\{\lambda \in P_n \mid x^\lambda \in I\}$.
Since $x^{\bm a} \in I$ if and only if ${\mathrm{part}(\bm a)}\in P(I)$,
the set $P(I)$ determines the ideal $I$.
We regard $P_n$ as a poset with the relation defined by $(a_1,\dots,a_n) \geq (b_1,\dots,b_n)$ if $a_i \geq b_i$ for all $i=1,2,\dots,n$.
Let $\Lambda (I)$ be the set of minimal elements in $P(I)$. Identifying partitions with the corresponding monomials in $R$, we have that up to the action of $\sym_n$, the set $\Lambda (I)$ forms a minimal set of generators of $I$: we have $I=\langle \lambda \mid \lambda \in \Lambda(I)\rangle_{\sym_n}$ and no proper subset of $\Lambda(I)$ generates $I$. In particular, $\lambda \in P_n$ is contained in $P(I)$ if and only if there is $\mu \in \Lambda(I)$ such that $\lambda \geq \mu$.

We sometimes regard a partition $\lambda$ as an element of $\ZZ^n$.
To avoid confusion,
for a partition $\lambda=(\lambda_1,\dots,\lambda_n)$, 
when we denote the graded component of a module $M$ of degree $(\lambda_1,\dots,\lambda_n)\in \ZZ^n$,
we write it as $M_{\bm \lambda}$ instead of $M_\lambda$.
Also, when we write partitions, we sometimes ignore ``$0$" and identify $(\lambda_1,\dots,\lambda_n)$ and $(\lambda_1,\dots,\lambda_n,0,\dots,0)$. For any $\bma=(a_1,\cdots,a_n)\in\bb{Z}^n$ we write $|\bma|=a_1+a_2+\cdots+a_n$ for the \defi{size} of $\bma$.

We often consider both $\ZZ^n$ and $\ZZ^s$.
We write $\ee_1,\dots,\ee_s$ for the standard vectors of $\ZZ^s$ and write $\eee_1,\dots,\eee_n$ for the standard vectors of $\ZZ^n$.
Also, for subsets $F \subset [s]$ and $G \subset [n]$, we write $\ee_F=\sum_{i \in F} \ee_i$ and $\eee_G=\sum_{i \in G} \eee_i$.

\subsection{Simplicial complexes and their homology groups}
\label{subsec:simpl-cpx}
A simplicial complex on a finite set $V$ is a collection $\Delta$ of subsets of $V$ satisfying the condition that $F \in \Delta$ and $G\subset F$ imply $G \in \Delta$.
Elements of $\Delta$ are called \defi{faces} and maximal elements of $\Delta$ are called \defi{facets}.
We distinguish the empty simplicial complex $\varnothing$ from the simplicial complex $\{\varnothing\}$ consisting only of the empty face.

If $\Delta$ is a simplicial complex on $V=\{v_1,\dots,v_n\}$, we write
$$\widetilde C_\bullet : 0 
\longleftarrow C_{-1}(\Delta)
\stackrel{\partial} \longleftarrow
C_{0}(\Delta)
\stackrel{\partial} \longleftarrow
C_{1}(\Delta)
\stackrel{\partial} \longleftarrow \cdots$$
for the (reduced) simplicial chain complex of $\Delta$ over a field $\kk$.
Here, each $C_k(\Delta)$ is the $\kk$-vector space spanned by the symbols $\{\alpha_F \mid F \in \Delta,\ |F|=k+1\}$. If we consider the total order $v_1< \cdots <v_n$ on $V$ then the boundary map is given by 
$$\partial(\alpha_F)=\sum_{v \in F} \epsilon_v(F) \cdot \alpha_{F \setminus \{v\}}$$
where $\epsilon_v(F)=(-1)^{|\{u \in F \mid u \leq v\}|}$.
The homology $\widetilde H_i(\Delta)=H_i(\widetilde C_\bullet(\Delta))$ is called the $i$-th \defi{reduced homology group} of $\Delta$.
We note that $\widetilde H_{-1}(\{\varnothing\}) \cong \kk$ while $\widetilde H_{i}(\varnothing)=0$ for all $i$.

\subsection{Betti numbers of monomial ideals}
It is known that when $I$ is a monomial ideal,
the $\ZZ^n$-graded components of $\Tor_i(I,\kk)$ can be identified with reduced homology groups of certain simplicial complexes.
We quickly recall this fact.

Let $I \subset R$ be a monomial ideal.
Let $K_\bullet^R=K_\bullet^R(x_1,\dots,x_n)$ be the Koszul complex w.r.t.\ the variables $x_1,\dots,x_n$ and $K_\bullet(I)=K_\bullet^R(x_1,\dots,x_n) \otimes_R I$.
We have that $K^R_i$ is the free $R$-module whose basis is the set $\{e_{a_1} \wedge \cdots \wedge e_{a_i} \mid 1 \leq a_1< \cdots <a_i \leq n\}$, where $e_{a_1} \wedge \cdots \wedge e_{a_i}$ is an element of the exterior algebra generated by $e_1,\dots,e_n$.
For $\bm a=(a_1,\dots,a_n) \in \ZZ_{\geq 0}^n$, we define the simplicial complex
$$\Delta_{\bm a}^I=\{ F \subset [n] \mid x^{\bm a-\eee_F} \in I\}.$$
One has an identification
\[
K_\bullet(I)_{\bm a} \cong \widetilde C_{\bullet+1}(\Delta_{\bm a}^I)
\]
given by the correspondence $x^{\bm a- \eee_{\{i_1,\dots,i_k\}}} e_{i_1} \wedge \cdots \wedge e_{i_k} \to \alpha_{\{i_1,\dots,i_k\}}$,
for $i_1<\dots<i_k$.
Since $\Tor_i(I,\kk) \cong H_i(K_\bullet(I))$,
we obtain the following formula.
 
\begin{theorem}[{\cite[Proposition 1.1]{BH2}}] For any monomial ideal $I \subset S$ and $\bm a \in \ZZ_{\geq 0}^n$, 
$$\Tor_i(I,\kk)_{\bm a} \cong \widetilde H_{i-1}(\Delta^I_{\bm a}).$$
\end{theorem}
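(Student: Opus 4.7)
The statement is the standard Bruns–Herzog reformulation of Hochster's formula for monomial ideals, and the excerpt has essentially set up the entire proof: it remains only to check that the identification already named respects differentials, and then to take homology.

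The plan is to compute $\Tor_i(I,\kk)$ via the Koszul complex $K_\bullet^R$. Since $R=\kk[x_1,\dots,x_n]$ is a polynomial ring, $K_\bullet^R$ is a free resolution of $\kk$ over $R$, and hence $\Tor_i(I,\kk) \cong H_i(I\otimes_R K_\bullet^R) = H_i(K_\bullet(I))$. Because $I$ is a monomial ideal, the complex $K_\bullet(I)$ inherits a $\ZZ^n$-grading: assigning $e_j$ the degree $\eee_j$, a typical basis element $x^{\bm b}\, e_{i_1}\wedge\cdots\wedge e_{i_k}\in K_k(I)$ has multidegree $\bm b + \eee_{\{i_1,\dots,i_k\}}$. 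Passing to the $\bm a$-th graded piece gives $\Tor_i(I,\kk)_{\bm a}\cong H_i\bigl(K_\bullet(I)_{\bm a}\bigr)$, so it suffices to identify the chain complex $K_\bullet(I)_{\bm a}$ with $\widetilde C_{\bullet-1}(\Delta^I_{\bm a})$.

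First I would describe the linear isomorphism $K_k(I)_{\bm a}\cong \widetilde C_{k-1}(\Delta^I_{\bm a})$. A multidegree-$\bm a$ basis element of $K_k(I)$ must be of the form $x^{\bm a-\eee_F} e_F$ with $F=\{i_1<\cdots<i_k\}\subset[n]$; for this to land in $I$ (so that it is a genuine element of $K_k(I)$), we need $\bm a-\eee_F\in\ZZ_{\geq 0}^n$ and $x^{\bm a-\eee_F}\in I$, which by the definition of $\Delta^I_{\bm a}$ is exactly the condition $F\in\Delta^I_{\bm a}$. The assignment $x^{\bm a-\eee_F}e_F\mapsto \alpha_F$ is therefore a bijection of bases and extends to a $\kk$-linear isomorphism, matching homological degree $k$ on the Koszul side with simplicial degree $k-1$ on the reduced chain complex side.

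Next I would verify that this bijection commutes with differentials. The Koszul differential acts as
\[
\partial\bigl(x^{\bm a-\eee_F} e_{i_1}\wedge\cdots\wedge e_{i_k}\bigr)
= \sum_{j=1}^{k} (-1)^{j-1}\, x_{i_j}\cdot x^{\bm a-\eee_F}\, e_{i_1}\wedge\cdots\wedge\widehat{e_{i_j}}\wedge\cdots\wedge e_{i_k}
= \sum_{j=1}^{k} (-1)^{j-1}\, x^{\bm a-\eee_{F\setminus\{i_j\}}}\, e_{F\setminus\{i_j\}},
\]
and under the identification this becomes $\sum_{j=1}^{k}(-1)^{j-1}\alpha_{F\setminus\{i_j\}}$, which is exactly the reduced simplicial boundary $\partial\alpha_F$ once one checks that $\epsilon_{i_j}(F)=(-1)^{j-1}$ for the total order $1<2<\cdots<n$. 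Faces $F\setminus\{i_j\}$ lying outside $\Delta^I_{\bm a}$ pose no issue because $x^{\bm a-\eee_{F\setminus\{i_j\}}}=x_{i_j}\cdot x^{\bm a-\eee_F}\in I$ automatically, so those terms are genuinely present on both sides. Taking homology in degree $i$ then gives $\Tor_i(I,\kk)_{\bm a}\cong \widetilde H_{i-1}(\Delta^I_{\bm a})$, as claimed. The only place where care is needed is the sign bookkeeping in the previous display, but this is a routine verification and not a genuine obstacle.
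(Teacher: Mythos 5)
Your proof is correct and follows essentially the same route the paper takes (and that it cites to \cite{BH2}): compute $\Tor_i(I,\kk)$ from $K_\bullet(I)=K_\bullet^R\otimes_R I$, use the $\ZZ^n$-grading, and identify $K_\bullet(I)_{\bm a}$ with the shifted reduced chain complex of $\Delta^I_{\bm a}$ via the very correspondence $x^{\bm a-\eee_F}e_F\mapsto \alpha_F$ that the paper writes down. The only nit is the sign: with the paper's convention $\epsilon_{i_j}(F)=(-1)^{j}$ rather than $(-1)^{j-1}$, so your identification intertwines the Koszul differential with $-\partial$; this is harmless, since rescaling the degree-$k$ piece by $(-1)^k$ gives an isomorphism of complexes and the homology is unchanged.
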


For square-free monomial ideals, the above formula coincides via Alexander duality with Hochster's formula (see \cite[\S 1]{BH2}).

\subsection{Specht modules of hook partitions}
\label{subsec:Specht}

Here we explain a few basic facts on Specht modules.
We only consider those modules for hook partitions since these are the only cases which we need.
We refer the readers to \cite{James,Sa} for a general theory.

A \defi{hook partition} is a partition of the form $(p,1^q)$ with $p\geq 1$ and $q \geq 0$.
Let $\lambda=(1+t,1^{s-1})$ be a hook partition with $s \geq 1,t \geq 0$.
A (Young) \defi{tableau} of shape $\lambda$ is an assignment of distinct positive integers to each box in $\lambda$.
We consider the following relation $\sim$ (extended linearly) on the vector space spanned by tableaux of shape $\lambda$.
\ytableausetup{boxsize=1.4em}
\begin{itemize}
\item[(I)]
For any permutation $\sigma$ on $[s]$ with $\sigma(k)=p_k$ and for any permutation $\tau$ on $[t]$ with $\tau(k)=q_k$, we have
$$
\left( \begin{ytableau} a_1 & b_1 &\none[\cdots] & b_t\\
a_2\\
\none[\vdots]\\
a_s \end{ytableau} \right) \sim \mathrm{sgn}(\sigma) \cdot 
\left(\begin{ytableau} a_{p_1} & b_{q_1} &\none[\cdots] & b_{q_t}\\
a_{p_2}\\
\none[\vdots]\\
a_{p_s} \end{ytableau}\right).
$$
\item[(II)]
For any sequence of integers $a_0,a_1,\dots,a_s,b_2,\dots,b_t$, we have
$$
\sum_{i=0}^s (-1)^i \cdot
\left(\begin{ytableau} a_0 & a_i &b_2 &\none[\cdots] & b_t\\
\none[\vdots]\\
\scriptstyle a_{i-1}\\
\scriptstyle a_{i+1}\\
\none[\vdots]\\
a_s \end{ytableau}\right) \sim 0.
$$
\end{itemize}
Let $\mathrm{Tab}(\lambda)$ be the set of tableaux of shape $\lambda$ with entries $1,2,\dots,s+t$.
The quotient space $S^{\lambda}=(\mathrm{span}_\kk(\mathrm{Tab}(\lambda)))/\sim$ is called the \defi{Specht module} of shape $\lambda$.
We say that a tableau
\ytableausetup{boxsize=1.2em}
$$\begin{ytableau} a_1 & b_1 & \none[\cdots] & b_t\\ a_2 \\ \none[\vdots] \\ a_s \end{ytableau}$$
is \defi{standard} if $a_1 < \cdots <a_s$ and $a_1< b_1< \cdots <b_t$.
It is well-known that standard tableaux in $\mathrm{Tab}(\lambda)$ form a basis of $S^\lambda$.

\subsection{Multi-dimensional complexes}
\label{subsec:multi-cx}

We define a \defi{$\bb{Z}^s$-complex} of $\kk$-vector spaces to be a complex $(K_{\bullet},\pd)$ where each term has a decomposition
\begin{equation}\label{eq:decomp-Kl}
 K_l = \bigoplus_{c_1+\cdots+c_s=l} K_{(c_1,\cdots,c_s)}
\end{equation}
and the differential $\pd$ sends
\[ \pd(K_{(c_1,\cdots,c_s)}) \subseteq \bigoplus_{i=1}^s K_{(c_1,\cdots,c_i-1,\cdots,c_s)}.\]
All our complexes are finite, that is, $K_{\bmc}\neq 0$ for finitely many $\bmc\in\bb{Z}^s$, and the spaces $K_{\bmc}$ are finite dimensional. We have a decomposition $\pd = \bigoplus\pd^i_{\bmc}$ with $1\leq i\leq s$ and $\bmc\in\bb{Z}^s$, where
\[ \pd^i_{\bmc}:K_{\bmc} \lra K_{\bmc-\ee_i},\]
and the condition that $\pd$ is a differential translates into
\[
\pd^i_{\bmc-\ee_i} \circ \pd^i_{\bmc}=0,\text{ and }\pd^j_{\bmc-\ee_i}\circ \pd^i_{\bmc} + \pd^i_{\bmc-\ee_j}\circ\pd^j_{\bmc} = 0\text{ for }i\neq j.
\]
We write $\pd^K$ and $\pd^{K,i}_{\bmc}$ when we want to emphasize the complex that $\pd$ is a differential of. We define a morphism of $\bb{Z}^s$-complexes $f:K\lra K'$ to be a morphism of complexes which is compatible with (\ref{eq:decomp-Kl}). Equivalently, for each $\bmc\in\bb{Z}^s$ we have a $\kk$-linear map $f_{\bmc}:K_{\bmc}\lra K'_{\bmc}$, satisfying
\[ \pd^{K',i}_{\bmc}\circ f_{\bmc} = f_{\bmc-\ee_i}\circ\pd^{K,i}_{\bmc}\text{ for all }1\leq i\leq s,\ \bmc\in\bb{Z}^s.\]
We write $\com_s$ for the category of $\bb{Z}^s$-complexes, identify $\com_0$ with the category $\vecs$ of $\kk$-vector spaces, and note that $\com_1$ is the usual category of complexes associated to $\vecs$. If we write $\mf{E}=\bw^{\bullet}(\pd^1,\cdots,\pd^s)$ for the exterior algebra on $\pd^1,\cdots,\pd^s$, then $\mf{E}$ has a $\bb{Z}^s$-grading with $\deg(\pd^i)=-\ee_i$, and the notion of a $\bb{Z}^s$-complex is equivalent to that of a finitely generated $\bb{Z}^s$-graded $\mf{E}$-module. We will write $\grmod_{\mf{E}}$ for the category of such modules, and use freely the equivalence between $\com_s$ and $\grmod_{\mf{E}}$.


We define the \defi{support} of a $\bb{Z}^s$-complex to be the set
\begin{equation}\label{eq:def-supp}
\supp(K) = \{ \bmc\in\bb{Z}^s: K_{\bmc}\neq 0\}.
\end{equation}
For $\bmd\in\bb{Z}^s$, we define the shifted complex $K[\bmd]$ by $K_{\bmc} = K_{\bmc+\bmd}$, with differentials shifted accordingly. We can think of a vector space $W$ as a $\bb{Z}^s$-complex supported at $(0^s)$, and we write $W[\bmd]$ for the corresponding shift (which is supported at $-\bmd$). 

\begin{example}\label{ex:boolean}
 For a vector space $W$, we define the $\bb{Z}^s$-complex $E=E(W)$ by
 \[ E_{\bmc} = \begin{cases}
 W & \text{if }\bmc\in\{0,1\}^s; \\
 0 & \text{otherwise}.
 \end{cases}
 \]
 For $\bmc\in\supp(E)$ with $c_i=1$, we define $\pd^i_{\bmc} : W \lra W$ to be multiplication by $(-1)^{c_1+\cdots+c_i}$. It is easy to see that $E$ is an exact complex (isomorphic up to shift to the tensor product of $W$ with the reduced chain complex of a simplex). As an object of $\grmod_{\mf{E}}$, $E$ can be identified with the free module $W\oo_{\kk}\mf{E}(-1^s)$, with generators $W$ in degree $(1^s)$. As such, $E$ is a projective object of $\com_s$ (it is also injective by \cite[Proposition~7.19]{eis-geom-syz}). 
\end{example}

We define a \defi{Boolean $\bb{Z}^s$-complex} to be a $\bb{Z}^s$-complex $K$ which is isomorphic to $E[\bmd]$, where $\bmd\in\bb{Z}^s$ and $E$ is as in Example~\ref{ex:boolean}. This is equivalent to the fact that $\supp(K) = -\bmd+\{0,1\}^{\times n}$ and $\pd^i_{\bmc}:K_{\bmc} \lra K_{\bmc-\ee_i}$ is an isomorphism whenever $\bmc,\bmc-\ee_i \in \supp(K)$. Whenever we want to emphasize the relation between the Boolean complex $K$ and the reduced chain complex of a simplex, we will write for each subset $F\subseteq[s]$
\begin{equation}\label{eq:def-K-F}
    K_{F} = K_{-\bmd+\ee_F}.
\end{equation}

It follows from Example~\ref{ex:boolean} that Boolean complexes are projective (and injective) objects in $\com_s$, which has the following useful consequence.

\begin{corollary}\label{cor:filtrations}
 Suppose that $K$ is a $\bb{Z}^s$-complex with a filtration whose composition factors $E^1,\cdots,E^r$ are Boolean $\bb{Z}^s$-complexes. We have that
 \[ K \simeq E^1 \oplus \cdots \oplus E^r.\]
\end{corollary}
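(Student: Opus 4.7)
The statement is essentially a formal consequence of the projectivity (equivalently, injectivity) of Boolean $\bb{Z}^s$-complexes recorded in Example~\ref{ex:boolean}. The plan is to induct on the length $r$ of the filtration
\[
0 = K^0 \subset K^1 \subset K^2 \subset \cdots \subset K^r = K,
\]
where each $K^i$ is a subobject of $K$ in $\com_s$ and $K^i/K^{i-1} \cong E^i$ is Boolean.

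\textbf{Base case and inductive step.} For $r=1$ there is nothing to prove, since $K = E^1$. Assume the result holds for filtrations of length $r-1$, so that by applying it to the induced filtration of $K^{r-1}$ we obtain an isomorphism
\[
K^{r-1} \;\cong\; E^1 \oplus E^2 \oplus \cdots \oplus E^{r-1}
\]
in $\com_s$. I would then look at the short exact sequence of $\bb{Z}^s$-complexes
\[
0 \lra K^{r-1} \lra K \lra E^r \lra 0.
\]
Because $E^r$ is a Boolean $\bb{Z}^s$-complex, the identification with a free $\mf{E}$-module of the form $W\oo_\kk \mf{E}(-1^s)$ in Example~\ref{ex:boolean} shows that $E^r$ is projective in $\com_s \simeq \grmod_{\mf{E}}$. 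Therefore the sequence splits, yielding
\[
K \;\cong\; K^{r-1} \oplus E^r \;\cong\; E^1 \oplus \cdots \oplus E^{r-1} \oplus E^r,
\]
which completes the induction.

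\textbf{Main obstacle.} There is no substantial obstacle: the content of the statement is entirely encoded in the fact that Boolean $\bb{Z}^s$-complexes are projective (or dually, injective) objects of $\com_s$, which was already observed in Example~\ref{ex:boolean} via the identification with the free $\mf{E}$-module $W\oo_\kk \mf{E}(-1^s)$. One could alternatively run the splitting argument using the injectivity of $E^1$ (applied to $0\to E^1 \to K \to K/K^1 \to 0$) and iterate from the other end, but either variant reduces the proof to a one-line invocation of the splitting lemma once projectivity (or injectivity) is in hand.
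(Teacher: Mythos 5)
Your proof is correct and follows essentially the same route as the paper: the paper deduces the corollary directly from the observation in Example~\ref{ex:boolean} that Boolean complexes are projective (and injective) objects of $\com_s$, which is precisely the splitting argument you carry out by induction on the length of the filtration.
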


If $K\in\com_s$ and $L\in\com_t$, then the (external) tensor product $K\bt L$ is defined to be the complex in $\com_{s+t}$ with
\[(K\bt L)_{\bmc} = \bigoplus_{\bmd+\bme=\bmc}K_{\bmd}\oo_{\kk} L_{\bme},\]
and differential
\[\pd^{K\bt L}_{\bmc}(u\oo v) = \pd^K(u) \oo v + (-1)^{|\bmd|} u\oo\pd^K(v)\text{ for }u\in K_{\bmd},\ v\in L_{\bme}.\]
We note that the complex in Example~\ref{ex:boolean} is the tensor product of the $\bb{Z}$-complex $W\simeq W$ with $(s-1)$ copies of the $\bb{Z}$-complex $\kk\simeq\kk$. In general, the tensor product of a Boolean $s$-complex with a Boolean $t$-complex is a Boolean $(s+t)$-complex.

If $\mc{F}^{\bullet}(K)$ is an decreasing filtration of $K$ by $\bb{Z}^s$-subcomplexes, we write 
\[\gr^k(K) = \mc{F}^k(K)/\mc{F}^{k+1}(K)\text{ and }\gr(K) = \bigoplus_{k} \gr^k(K).\]
Given filtrations $\mc{F}^{\bullet}(K)$, $\mc{F}^{\bullet}(L)$, we get an induced filtration on $K\bt L$, with
\[ \mc{F}^k(K\bt L) = \sum_{i+j=k} \mc{F}^i(K) \bt \mc{F}^j(L)\text{ and }\gr^k(K\bt L) = \bigoplus_{i+j=k} \gr^i(K) \bt \gr^j(L).\]

If $G$ is a group, we will be interested more generally in the category $\com_s^G$ of finite complexes of finite $\kk[G]$-modules, or equivalently, the category $\grmod_{\mf{E}}^G$ of finitely generated $G$-equivariant $\bb{Z}^s$-graded $\mf{E}$-modules, where the action of $G$ on $\mf{E}$ is trivial. For $s=0$, $\com_0^G$ is the category $\mods_G$ of finite $G$-modules. If $K\in\com_s^G$ and $L\in\com_t^{G'}$ then $K\bt L\in\com_{s+t}^{G\times G'}$, and the discussion of filtrations is analogous in the equivariant setting.

Using the natural isomorphisms
\[\Hom_{\com_s^G}(E(W),K) \simeq \Hom_{\grmod_{\mf{E}}^G}(W\oo_{\kk}\mf{E}(-1^s),K) \simeq \Hom_{\mods_G}(W,K_{(1^s)})\]
we can interpet the construction of the Boolean complex $E(W)$ in Example~\ref{ex:boolean} as a functor $E:\mods_G\lra \com_s^G$ which is left-adjoint to the functor $P:\com_s^G\lra\mods_G$ given by $P(K) = K_{(1^s)}$. Since $P$ is exact, we have that $E(W)$ is projective whenever $W$ is a projective $G$-module. When $G$ is a finite group and $\kk$ has characteristic zero or coprime to $|G|$, we have that $\mods_G$ is semi-simple, and in particular $E(W)$ is a projective object of $\com_s^G$ for every $G$-module $W$. We get the following equivariant version of Corollary~\ref{cor:filtrations}:

\begin{corollary}\label{cor:G-filtrations}
 Suppose that $G$ is a finite group, $\kk$ is a field of characteristic zero or coprime to $|G|$, and $K\in\com_s^G$ has a filtration with composition factors $E^i = E(W_i)$, where $W_i\in\mods_G$, for $i=1,\dots,r$. We have that
 \[ K \simeq E^1 \oplus \cdots \oplus E^r.\]
\end{corollary}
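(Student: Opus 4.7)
The plan is to deduce the statement as a formal consequence of the projectivity of each $E^i = E(W_i)$ in $\com_s^G$, which is precisely the content of the discussion immediately preceding the corollary. Since $\kk$ has characteristic zero or coprime to $|G|$, Maschke's theorem guarantees that $\mods_G$ is semi-simple, so every $W_i$ is projective as a $G$-module. The adjunction $(E, P)$ together with the exactness of $P$ then transfers this projectivity to $E^i \in \com_s^G$.

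Given this, let $0 = \mc{F}^{r+1}(K) \subseteq \mc{F}^r(K) \subseteq \cdots \subseteq \mc{F}^1(K) = K$ be the filtration on $K$ with $\gr^i(K) \simeq E^i$. For each $i = 1, \ldots, r$ we have a short exact sequence
\[
0 \longrightarrow \mc{F}^{i+1}(K) \longrightarrow \mc{F}^i(K) \longrightarrow E^i \longrightarrow 0
\]
in $\com_s^G$. Since $E^i$ is projective, this sequence splits, yielding a $G$-equivariant isomorphism $\mc{F}^i(K) \simeq \mc{F}^{i+1}(K) \oplus E^i$. Downward induction on $i$, starting from $\mc{F}^{r+1}(K) = 0$, then gives $\mc{F}^i(K) \simeq E^i \oplus E^{i+1} \oplus \cdots \oplus E^r$, and taking $i = 1$ produces the desired decomposition $K \simeq E^1 \oplus \cdots \oplus E^r$.

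There is no substantive obstacle here: all the nontrivial input has already been established in the paragraphs preceding the statement, namely the identification of $E$ as a left adjoint whose right adjoint $P$ is exact, and the semi-simplicity of $\mods_G$ under the hypotheses on $\kk$ and $|G|$. The corollary is simply the equivariant upgrade of Corollary~\ref{cor:filtrations}, with Maschke's theorem playing the role that was automatic in the non-equivariant setting.
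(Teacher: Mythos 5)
Your proof is correct and follows essentially the same route the paper intends: Maschke's theorem makes every $W_i$ projective in $\mods_G$, the adjunction $(E,P)$ with $P$ exact transfers projectivity to each $E^i$ in $\com_s^G$, and the filtration then splits step by step. This is exactly the argument implicit in the discussion preceding the corollary, so there is nothing to add.
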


We end this section by explaining how Corollaries~\ref{cor:filtrations} and ~\ref{cor:G-filtrations} will be applied in our work. Suppose that $F\in\com_1^G$ is an exact complex supported in non-negative degrees:
\[ 0\lla F_0 \overset{\pd_1}{\lla} F_{1} \lla \cdots \overset{\pd_r}{\lla} F_r \lla 0.\]
We let $U_l = \operatorname{Im}(\pd_{l+1})$ and $D_l=F_l/U_l$, so that $\pd_l$ establishes an isomorphism $D_l\simeq U_{l-1}$. We have a natural filtration on $F$ given by the canonical truncations
\[ \mc{F}^l(F):\quad 0 \lla U_l \lla F_{l+1} \lla \cdots \lla F_{r-1} \lla F_r \lla 0,\]
with $\gr^l(F) = (D_{l+1}\simeq U_l)$ a Boolean $\bb{Z}$-complex. By Corollary~\ref{cor:filtrations}, we have an isomorphism $F\simeq\gr(F)$ in $\com_1$, which by Corollary~\ref{cor:G-filtrations} can be taken to be $G$-equivariant (that is, in $\com_1^G$) if $G$ is finite and $\kk$ has characteristic zero or coprime to $|G|$. Choosing (not necessarily $G$-equivariant) sections of the quotient maps $F_l\twoheadrightarrow D_l$, we can picture the complex $F$ as:
\[
\xymatrix @C=4pc @R=0pc{
U_0 & U_1 & U_2 & U_3 & \\
& \bigoplus & \bigoplus & \bigoplus & \hspace{-50pt}\cdots \\
 & D_1 \ar[uul]_-{\simeq}^-{\pd_1} & D_2 \ar[uul]_-{\simeq}^-{\pd_2} & D_3 \ar[uul]_-{\simeq}^-{\pd_3} & \\
}
\]

More generally, if $F^i\in\com_1^{G_i}$ for $i=1,\dots,s$, then the canonical filtrations on each $F^i$ induce a filtration on the tensor product $F=F^1\bt\cdots\bt F^s\in\com_s^G$, where $G=G_1\times\cdots\times G_s$. We have an isomorphism $F\simeq\gr(F)$ in $\com_s$, and if $G$ is finite and $\kk$ has characteristic zero or coprime to $|G|$, then $F\simeq\gr(F)$ in $\com_s^G$.

\section{Proof of the main theorem}
\label{sec:main-thm}

The goal of this section is to prove Theorem~\ref{thm:main}. We first study the complex
\begin{equation}\label{eq:def-K-mu}
 K_\bullet^{\mu} = \bigoplus_{\bm a \in \ZZ^n,\ \mathrm{part}(\bm a)=\mu} (K_\bullet^R)_{\bm a},
\end{equation}
where $\mu\in P_n$. We note that $K_\bullet^{\mu}$ is a complex of $\sym_n$-modules, and using the notation in Section~\ref{subsec:multi-cx}, we will show that $K^{\mu}_{\bullet}$ can be thought of as an object in $\com_s^{\sym_n}$ for an appropriate value of $s$, and that $K^{\mu}_{\bullet}$ has a natural filtration with composition factors that are $\sym_n$-equivariant Boolean complexes. Based on the discussion in Section~\ref{subsec:multi-cx}, this gives a decomposition of $K^{\mu}_{\bullet}$ into a direct sum of Boolean complexes, which is $\sym_n$-equivariant in characteristic zero or $>n$. This decomposition is then the key ingredient in the proof of Theorem~\ref{thm:main}.

\medskip

\noindent{\bf Step 1.}
Suppose that $\mu=(a^n)$ with $a >0$. We have
$$
K^{\mu}_l = \mathrm{span}_\kk \left\{ \sigma\big((x_1^{a-1}\cdots x_l^{a-1}x_{l+1}^a\cdots x_n^a) \cdot (e_1\wedge \cdots \wedge e_l)\big) \mid \sigma \in \sym_n\right\}.$$
Since $K^R_{\bullet}$ is exact except in degree $(0^n)$, it follows that $K^{\mu}_{\bullet}$ is also exact. We can then define $U_l=\operatorname{Im}(\pd_{l+1})$ and $D_l=K^{\mu}_l/U_l$ as in Section~\ref{subsec:multi-cx}, to get a filtration of $K^{\mu}_{\bullet}$ by Boolean $\sym_n$-equivariant complexes. To determine the isomorphism type of each $D_l$ as an $\sym_n$-module, we note that it does not depend on $a$, and hence we can take $a=1$. The natural map that associates
\[\ytableausetup{boxsize=2em}
\begin{ytableau} a_1 & b_1 &\none[\cdots] & b_{n-l}\\
a_2\\
\none[\vdots]\\
a_l \end{ytableau} \lra\quad x_{b_1}\cdots x_{b_{n-l}}\cdot e_{a_1}\wedge\cdots\wedge e_{a_l}
\]
induces an isomorphism between the Specht module $S^{(n-l+1,1^{l-1})}$ and $D_l$ (the relations (I) correspond to the skew-symmetric property of wedge products, while the relations (II) correspond to the generators $\pd_{l+1}(x_{b_2}\cdots x_{b_{n-l}} e_{a_0}\wedge\cdots\wedge e_{a_l})$ of $U_l$). Using that $K^{\mu}_l \simeq \mathrm{Ind}_{\sym_l \times \sym_{n-l}}^{\sym_n} \left( S^{(1^l)} \boxtimes S^{(n-l)} \right)$ (see \cite[Section~4]{Ga}) we recover a special case of the filtrations in \cite[\textsection~16]{James} known as Pieri's rule: there is an exact sequence
\[ 0 \lra S^{(n-l,1^l)} \lra \mathrm{Ind}_{\sym_l \times \sym_{n-l}}^{\sym_n} \left( S^{(1^l)} \boxtimes S^{(n-l)} \right) \lra S^{(n-l+1,1^{l-1})} \lra 0,\]
given by the inclusion of $U_l\simeq D_{l+1}$ into $K^{\mu}_l$, followed by the projection onto $D_l$.

\medskip

\noindent{\bf Step 2.}
We now consider the general case, when $\mu$ is of the form
$$\mu=(\mu_1,\dots,\mu_n)=(d_1^{p_1},\dots,d_s^{p_s},0^{p_{s+1}})\in P_n,$$
where $d_1> \cdots>d_s>0$. We set $d_{s+1}=0$ and let
\begin{equation}\label{eq:def-Xk}
X_k=\{ x_i \mid \mu_i=d_k\}\ \  \mbox{ and }
\ \ \kk[X_k]=\kk[x_i : x_i \in X_k],\text{ for }k=1,\cdots,s+1,
\end{equation}
noting that $|X_k|=p_k$. By {\bf Step 1}, we have that $F^k = (K^{\kk[X_k]}_{\bullet})_{(d_k^{p_k})}$ is an object in $\com_1^{\sym_{p_k}}$ for $1\leq k\leq s$, which admits a filtration with composition factors
\[\gr^l(F^k) = (D_{l+1}^k\simeq U_l^k) = E(S^{(p_k-l,1^l)})[-l]\text{ for }0\leq l\leq p_{k}-1,\]
where
\[U_l^k \simeq S^{(p_k-l,1^l)},\quad D_l^k \simeq S^{(p_k-l+1,1^{l-1})}.\]
We think of $(K_\bullet^{\kk[X_{s+1}]})_{(0^{p_{s+1}})}=\kk$ as an object in $\com_0^{\sym_{p_{s+1}}}$, and represent it by the Specht module $S^{(p_{s+1})}$. If we let $\sym_{\ul{p}}=\sym_{p_1} \times \cdots \times \sym_{p_s}\times\sym_{p_{s+1}}$ then we have
\begin{equation}\label{eq:K-mu-from-Fi}
(K_\bullet^R)_{\bm \mu}
= F^1\bt \cdots \bt F^s \bt S^{(p_{s+1})} \in \com_s^{\sym_{\ul{p}}}.
\end{equation}
For $(0^s)\leq \bmc=(c_1,\cdots,c_s)\leq(p_1-1,\cdots,p_s-1)$ we define
\begin{equation}\label{eq:def-E-mu-c}
\begin{aligned}
E^{\mu,\bmc}&=\gr^{c_1}(F^1)\bt\cdots\bt\gr^{c_s}(F^s)\bt S^{(p_{s+1})} \\
&=E\left(S^{(p_1-c_1,1^{c_1})}\bt\cdots\bt S^{(p_s-c_s,1^{c_s})}\bt S^{(p_{s+1})} \right)[-\bmc] \in \com_s^{\sym_{\ul{p}}}.
\end{aligned}
\end{equation}
Using (\ref{eq:K-mu-from-Fi}) and the discussion in Section~\ref{subsec:multi-cx}, we have that $(K_\bullet^R)_{\bm \mu}$ admits a filtration with composition factors
\[\gr^l(K_\bullet^R)_{\bm \mu} = \bigoplus_{|\bmc|=l} E^{\mu,\bmc},\text{ for }0\leq l\leq (p_1-1)+\cdots+(p_s-1).\]
Using (\ref{eq:def-K-mu}), we have $K_\bullet^\mu = \mathrm{Ind}_{\sym_{\ul{p}}}^{\sym_n}((K_\bullet^R)_{\bm \mu})$, and since induction is an exact functor, we have that $K^{\mu}_{\bullet}$ admits a filtration with
\[ \gr^l(K^{\mu}_{\bullet})=\bigoplus_{|\bmc|=l}\mathrm{Ind}_{\sym_{\ul{p}}}^{\sym_n}(E^{\mu,\bmc}) = \bigoplus_{|\bmc|=l}E\left( \mathcal S^{((p_1-c_1,1^{c_1}),\dots,(p_s-c_s,1^{c_s}),(p_{s+1}))} \right)[-\bmc],\]
where the last equality uses (\ref{eq:mcS-pi}), (\ref{eq:def-E-mu-c}), and the fact the construction of Boolean complexes in Example~\ref{ex:boolean} commutes with induction. If we define
\begin{equation}\label{eq:Lmu-c-boolean} L^{\mu,\bmc}_{\bullet} = E\left( \mathcal S^{((p_1-c_1,1^{c_1}),\dots,(p_s-c_s,1^{c_s}),(p_{s+1}))} \right)[-\bmc]
\end{equation}
for $(0^s)\leq \bmc\leq(p_1-1,\cdots,p_s-1)$,
then based on Corollaries~\ref{cor:filtrations} and~\ref{cor:G-filtrations} we get a decomposition
 \begin{equation}\label{eq:K-mu-sum} K^{\mu}_{\bullet}=\bigoplus_{(0^s)\leq \bmc\leq(p_1-1,\cdots,p_s-1)} L^{\mu,\bmc}_{\bullet}
 \end{equation}
 which is $\sym_n$-equivariant when $\kk$ has characteristic zero or $>n$.

Before explaining the proof of Theorem~\ref{thm:main}, it will be useful to analyze an example in order to illustrate the structure of $K^{\mu}_{\bullet}$.

\begin{example}\label{ex:K551}
 Suppose that $n=3$ and $\mu=(5,5,1)$. We have $s=2$, $p_1=2$, $p_2=1$, and $p_3=0$.
 Then $K^{\mu}_{\bullet}$ is $\ZZ^2$-complex (see (\ref{eq:decomp-Kl})) by 
 \eqref{eq:K-mu-from-Fi}, and using the $\bb{Z}^2$-grading on $K^{\mu}_{\bullet}$, we can picture the complex as
\begin{equation}\label{eq:K551-Z2}
\begin{gathered}
\xymatrix @C=2.5pc @R=0pc{
& & & & K^{\mu}_{(2,0)} \ar[dl] & \\
& & & K^{\mu}_{(1,0)} \ar[dl] & & K^{\mu}_{(2,1)} \ar[dl] \ar[ul] \\
K^{\mu}_{\bullet}: & 0 & K^{\mu}_{(0,0)} \ar[l] & & K^{\mu}_{(1,1)} \ar[ul] \ar[dl] & \\
& & & K^{\mu}_{(0,1)} \ar[ul] & & \\
}
\end{gathered}
\end{equation}
From~(\ref{eq:K-mu-sum}), we have a direct sum decomposition
 \[K^{\mu}_{\bullet} = L^{\mu,(0,0)}_{\bullet} \oplus L^{\mu,(1,0)}_{\bullet},\]
 where the summands are Boolean complexes. Using (\ref{eq:def-K-F}), we refine (\ref{eq:K551-Z2}) to 
\begin{equation}\label{eq:K551-sumL}
\begin{gathered}
\xymatrix @C=2.5pc @R=0pc{
& & & & *+[F]{\color{red}L^{\mu,(1,0)}_{\{1\}}} \ar[dl] & \\
& & & *+[F]{\begin{array}{c} {\color{red}L^{\mu,(1,0)}_{\varnothing}} \\ \oplus \\ {\color{blue}L^{\mu,(0,0)}_{\{1\}}} \end{array}} \ar[dl] & & *+[F]{\color{red}L^{\mu,(1,0)}_{\{1,2\}}} \ar[dl] \ar[ul] \\
K^{\mu}_{\bullet}: & 0 & *+[F]{\color{blue}L^{\mu,(0,0)}_{\varnothing}} \ar[l] & & *+[F]{\begin{array}{c} {\color{red}L^{\mu,(1,0)}_{\{2\}}} \\ \oplus \\ {\color{blue}L^{\mu,(0,0)}_{\{1,2\}}} \end{array}} \ar[ul] \ar[dl] & \\
& & & *+[F]{\color{blue}L^{\mu,(0,0)}_{\{2\}}} \ar[ul] & & \\
}
\end{gathered}
\end{equation}
where the blue terms come from $L^{\mu,(0,0)}_{\bullet}$, and the red ones from $L^{\mu,(1,0)}_{\bullet}$. Notice that $L^{\mu,\bmc}_F$ is a summand of $K^{\mu}_{\bmb}$ if and only if $\bmb=\bmc+\ee_F$. Each of the complexes $L^{\mu,(0,0)}_{\bullet}$ and  $L^{\mu,(1,0)}_{\bullet}$ is isomorphic up to a shift to some number of copies of the reduced simplicial complex of a $1$-dimensional simplex. As an $\sym_3$-representation, each of the blue modules is isomorphic to $\mc{S}^{((2),(1))}$, and each of the red modules is isomorphic to $\mc{S}^{((1,1),(1))}$.
\end{example}

We are now ready to prove our main result.

\begin{proof}[Proof of Theorem~\ref{thm:main}] Recall that $\Tor_i(I,\kk)$ can be computed as the $i$-th homology group of the subcomplex of $K_{\bullet}^R$ given by
\[ K_{\bullet}(I) = K_{\bullet}^R \oo_R I.\]
If we fix $\mu\in P_n$ then $\Tor_i(I,\kk)_{\langle \mu \rangle}$ is the homology of $\left(K_\bullet(I) \right)_{\langle \mu \rangle}$, which is a subcomplex of $K^{\mu}_{\bullet}$. We will describe $\left(K_\bullet(I) \right)_{\langle \mu \rangle}$ in relation to the decomposition~(\ref{eq:K-mu-sum}).

Consider any $\bmb\in\supp(K^{\mu}_{\bullet})$ (as defined in (\ref{eq:def-supp})), and note that $0\leq b_k\leq p_k$ for all $k=1,\cdots,s$. Using the notation (\ref{eq:mu-minus-c}) and (\ref{eq:def-Xk}), we write
\[\mu \setminus \bmb=\mu -\eee_{G_1 \cup G_2 \cup \cdots \cup G_s},\text{ for subsets }G_k \subseteq X_k\text{ with }|G_k|=b_k.\]
We then have 
\begin{equation}\label{eq:span-set-Kmu-b}
K_{\bmb}^\mu= \mathrm{span}_\kk \left\{ \sigma \left(x^{\mu \setminus \bmb} \cdot e_{G_1} \wedge \cdots \wedge e_{G_s} \right) \mid \sigma \in \sym_n\right\},
\end{equation}
where $e_G=e_{g_1}\wedge \cdots \wedge e_{g_m}$ for $G=\{g_1,\cdots,g_m\} \subseteq [n]$. Since $K_\bullet^R(I)$ is the subcomplex of $K_\bullet^R$ with
$$K_l^R(I) = \mathrm{span}_\kk \{x^{\bma} \cdot e_{i_1} \wedge \cdots \wedge e_{i_l} \mid x^{\bma} \in I,\ \{i_1,\dots,i_l\} \subseteq [n] \},$$
the equation (\ref{eq:span-set-Kmu-b}) tells us that an element of $K_{\bmb}^\mu$ appears in $K_\bullet^R(I)$
if and only if $\mu\setminus\bmb \in P(I)$, and in that case the whole $K_{\bmb}^\mu$ is contained in $K_\bullet^R(I)$. This shows that
\[
K_l^R(I)_{\langle \mu \rangle} = \bigoplus_{\substack{|\bmb|=l \\ \mu \setminus {\bmb} \in P(I)}} K_{\bmb}^\mu.
\]
Using (\ref{eq:K-mu-sum}) and the notation (\ref{eq:def-K-F}) as in Example~\ref{ex:K551}, it follows that $L^{\mu,\bmc}_F$ is a summand of $K_l^R(I)_{\langle \mu \rangle}$ if and only if $\mu\setminus(\bmc+\ee_F)\in P(I)$, which by (\ref{eq:def-Del-mu-c-I}) is equivalent to $F$ being a face of $\Delta^{\mu,\bm c}(I)$. If we consider the subcomplex $L^{\mu,\bmc}_{\bullet}(I)\subseteq L^{\mu,\bmc}_{\bullet}$ defined by
\[L^{\mu,\bmc}_{l+|\bmc|}(I) = \bigoplus_{\substack{F\in\Delta^{\mu,\bm c}(I),\\ |F|=l}} L^{\mu,\bmc}_F,\]
then it follows that
\begin{equation}\label{eq:KImu-sum-LI}
( K_\bullet^R(I))_{\langle \mu \rangle} = \bigoplus_{\bm 0 \leq \bm c \leq \bf p(\mu)} L_\bullet^{\mu,\bm c}(I),
\end{equation}
and moreover, we have from (\ref{eq:Lmu-c-boolean}) that
\begin{equation}\label{eq:LI-as-chain-Delta}
L_\bullet^{\mu,\bm c}(I) \cong \widetilde C_{\bullet+1+|\bm c|}(\Delta^{\mu,\bm c}(I)) \otimes_\kk \mathcal S^{((p_1-c_1,1^{c_1}),\dots,(p_s-c_s,1^{c_s}),(p_{s+1}))}.
\end{equation}
Combining (\ref{eq:KImu-sum-LI}) with (\ref{eq:LI-as-chain-Delta}) and taking homology yields the desired description of $\Tor_i(I,\kk)_{\langle \mu \rangle}$, concluding the proof. 
\end{proof}

We end this section by illustrating the proof of Theorem~\ref{thm:main} with an example.

\begin{example}\label{ex:K551-contd}
 We continue with the notation in Example~\ref{ex:K551}, and consider the ideal $I=\langle (4,1,1),(5,2,0)\rangle$. When considering the subcomplex $(K_\bullet(I))_{\langle \mu \rangle}\subseteq K^{\mu}_{\bullet}$, the term $K^{\mu}_{(2,1)}$ disappears since $\mu\setminus (2,1)=(4,4,0) \not \in P(I)$. We get from (\ref{eq:K551-sumL})
\[
\xymatrix @C=2.5pc @R=0pc{
& & & & *+[F]{\color{red}L^{\mu,(1,0)}_{\{1\}}(I)} \ar[dl] \\
& & & *+[F]{\begin{array}{c} {\color{red}L^{\mu,(1,0)}_{\varnothing}(I)} \\ \oplus \\ {\color{blue}L^{\mu,(0,0)}_{\{1\}}}(I) \end{array}} \ar[dl] & \\
K^{\mu}_{\bullet}(I): & 0 & *+[F]{\color{blue}L^{\mu,(0,0)}_{\varnothing}(I)} \ar[l] & & *+[F]{\begin{array}{c} {\color{red}L^{\mu,(1,0)}_{\{2\}}(I)} \\ \oplus \\ {\color{blue}L^{\mu,(0,0)}_{\{1,2\}}(I)} \end{array}} \ar[ul] \ar[dl] \\
& & & *+[F]{\color{blue}L^{\mu,(0,0)}_{\{2\}}(I)} \ar[ul] & \\
}
\]
The red complex $L^{\mu,(1,0)}_{\bullet}(I)$ is then the $\sym_3$-module $\mc{S}^{((1,1),(1))}$ tensored with the reduced chain complex of two points, while the blue complex $L^{\mu,(0,0)}_{\bullet}(I)=L^{\mu,(0,0)}$ remains acyclic. It follows as noted in the introduction that
\[\Tor_i(I,\kk)_{\langle (5,5,1) \rangle} = \begin{cases}
\mc{S}^{((1,1),(1))} & \text{if }i=2;\\
0 & \text{otherwise}.
\end{cases}\]
\end{example}

\section{The $\sym_n$-invariant part of the Betti table}
\label{sec:sn-inv-Betti}

The goal of this section is to give a quick application of Theorem~\ref{thm:main} and the Nerve Theorem, computing the $\sym_n$-invariant part of $\Tor_i(I,\kk)$ when $I$ is an $\sym_n$-invariant monomial ideal, and $\kk$ is a field of characteristic zero or $>n$. More precisely, we show the following.

\begin{theorem}
\label{thm:invariantpart}
Let $\lambda^1,\dots,\lambda^r \in P_n$.
Then, for any $\mu \in P_n$, one has
\begin{align} \label{eq:6.1-1}
\gamma_i^{\mu,\bm 0}(\langle \lambda^1,\cdots, \lambda^r \rangle_{\sym_n})=\dim_\kk \Tor_i((x^{\lambda^1},\cdots,x^{\lambda^r}),\kk)_{\bm \mu} \ \mbox{ for all }i.
\end{align}
In particular, if $\mathrm{char}(\kk)=0$ or $\mathrm{char}(\kk)>n$, then
\begin{align} \label{eq:6.1-2}
    \Tor_i \big(\langle \lambda^1,\cdots, \lambda^r \rangle_{\sym_n},\kk \big)^{\sym_n} \cong \Tor_i\big((x^{\lambda^1},\cdots,x^{\lambda^r}),\kk\big) \ \mbox{ for all }i.
\end{align}
\end{theorem}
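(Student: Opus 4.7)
The plan is to prove the combinatorial identity (\ref{eq:6.1-1}) by showing that the simplicial complex $\Delta^{\mu,\bm 0}(I)$ has the same reduced homology as the complex $\Delta^J_{\bm \mu}$ computing the multigraded Betti numbers of $J=(x^{\lambda^1},\dots,x^{\lambda^r})$, and to then deduce (\ref{eq:6.1-2}) by combining this identity with Theorem~\ref{thm:main} and Frobenius reciprocity.

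For (\ref{eq:6.1-1}), the key is to express both complexes as unions of simplices indexed by the same set $K^*=\{k:\mu\geq\lambda^k\text{ coordinatewise}\}$. Set $q_i=p_1+\cdots+p_i$, and for $k\in K^*$ let $F_k=\{i\in[s]:d_i>\lambda^k_{q_i}\}\subseteq [s]$ and $M_k=\{j\in[n]:\mu_j>\lambda^k_j\}\subseteq [n]$. Using that $\mu\setminus\ee_F=\mu-\eee_{\{q_i:i\in F\}}$ is still a partition, and that for partitions $\nu,\lambda\in P_n$ one has $x^{\nu}\in\langle\lambda\rangle_{\sym_n}$ iff $\nu\geq\lambda$ coordinatewise (which follows from the fact that sorting preserves coordinatewise inequalities of nonnegative vectors), I would verify directly that
\[\Delta^{\mu,\bm 0}(I)=\bigcup_{k\in K^*}2^{F_k}\qquad\text{and}\qquad\Delta^J_{\bm\mu}=\bigcup_{k\in K^*}2^{M_k}.\]
All pieces and their intersections are simplices, hence these are good covers. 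The crucial combinatorial identity is that for every $K\subseteq K^*$,
\[\bigcap_{k\in K}F_k\neq\emptyset\iff\bigcap_{k\in K}M_k\neq\emptyset.\]
The direction $(\Leftarrow)$ follows because if $j\in\bigcap_k M_k$ lies in block $i$, then $d_i>\lambda^k_j\geq\lambda^k_{q_i}$ for all $k\in K$ (using that $\lambda^k$ is weakly decreasing and $j\leq q_i$), so $i\in\bigcap_k F_k$; the direction $(\Rightarrow)$ is immediate since $i\in\bigcap F_k$ gives $q_i\in\bigcap M_k$. By the Nerve Theorem, both $\Delta^{\mu,\bm 0}(I)$ and $\Delta^J_{\bm\mu}$ are homotopy equivalent to this common nerve, and (\ref{eq:6.1-1}) follows from the Hochster-type formula $\Tor_i(J,\kk)_{\bm\mu}\cong\widetilde H_{i-1}(\Delta^J_{\bm\mu})$.

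For (\ref{eq:6.1-2}), I invoke Theorem~\ref{thm:main}, which in the stated characteristic provides an $\sym_n$-equivariant decomposition of $\Tor_i(I,\kk)_{\langle\mu\rangle}$ as a direct sum of copies of $\mathcal{S}^{((p_1-c_1,1^{c_1}),\dots,(p_s-c_s,1^{c_s}),(p_{s+1}))}$ indexed by $\bm 0\leq\bm c\leq\bm p(\mu)$. Since taking $\sym_n$-invariants is exact in our characteristic, Frobenius reciprocity gives $(\mathcal{S}^\pi)^{\sym_n}\cong (S^{(p_1-c_1,1^{c_1})})^{\sym_{p_1}}\otimes\cdots\otimes (S^{(p_s-c_s,1^{c_s})})^{\sym_{p_s}}\otimes (S^{(p_{s+1})})^{\sym_{p_{s+1}}}$. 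Because a hook Specht module $S^{(p-c,1^c)}$ with $c>0$ is a non-trivial irreducible (so has no invariants) while $S^{(p)}$ is the trivial representation (with a $1$-dimensional invariant space), only the $\bm c=\bm 0$ summand contributes, and exactly one-dimensionally. Combining this with (\ref{eq:6.1-1}) and summing over $\mu\in P_n$ gives $\Tor_i(I,\kk)^{\sym_n}\cong\bigoplus_\mu\kk^{\gamma_i^{\mu,\bm 0}(I)}\cong\bigoplus_\mu\Tor_i(J,\kk)_{\bm\mu}=\Tor_i(J,\kk)$.

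The main obstacle I expect is the careful handling of the degenerate cases of the Nerve Theorem. If some $F_k$ (equivalently, $M_k$) is empty, then $2^{F_k}=\{\emptyset\}$ is geometrically empty while carrying non-trivial reduced $(-1)$-homology, so such indices must be discarded before applying the Nerve Lemma; and the boundary case where every relevant $F_k$ and $M_k$ is empty (corresponding essentially to $\mu$ being a minimal element of $P(I)$ with respect to block decrements) needs to be treated directly, in which case both $\Delta^{\mu,\bm 0}(I)$ and $\Delta^J_{\bm\mu}$ collapse to $\{\emptyset\}$ and the two sides of (\ref{eq:6.1-1}) match via $\widetilde H_{-1}(\{\emptyset\})=\kk$.
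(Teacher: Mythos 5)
Your proof is correct, and for the combinatorial identity \eqref{eq:6.1-1} it takes a genuinely different (though related) route from the paper. The paper works with the lcm-lattice complex $X_{<\mu}=\{G\subseteq[r]\mid \lcm(G)<\mu\}$, invokes the Bayer--Sturmfels/Gasharov--Peeva--Welker formula $\Tor_i(J,\kk)_{\bm\mu}\cong\widetilde H_{i-1}(X_{<\mu})$, and then applies the Nerve Theorem once, to the cover of $X_{<\mu}$ by its $s$ facets $G_i=\{j\mid\lambda^j\leq\mu\setminus\ee_i\}$, whose nerve is literally $\Delta^{\mu,\bm 0}(I)$. You instead stay with the upper Koszul complexes $\Delta^J_{\bm\mu}$ from the preliminaries, cover both $\Delta^{\mu,\bm 0}(I)$ and $\Delta^J_{\bm\mu}$ by full simplices indexed by the generators $k\in K^*$, and apply the Nerve Theorem twice, the key point being that $\bigcap_{k\in K}F_k\neq\varnothing\iff\bigcap_{k\in K}M_k\neq\varnothing$; your verification of this equivalence (using that $\lambda^k$ is weakly decreasing and $j\leq q_i$) is correct, as is your identification of the two complexes as unions $\bigcup_k 2^{F_k}$ and $\bigcup_k 2^{M_k}$, and your handling of the degenerate pieces with $F_k=\varnothing$ and of the case where all pieces degenerate. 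In effect the two proofs are nerve-dual: the paper's nerve lives on the vertex set $[s]$ of column blocks, yours on the index set of generators. For \eqref{eq:6.1-2} your argument is essentially the paper's: where you use Frobenius reciprocity plus the fact that a hook Specht module $S^{(p-c,1^c)}$ with $c>0$ has no invariants (valid since $\mathrm{char}(\kk)=0$ or $>n$ makes these non-trivial irreducibles), the paper quotes the Littlewood--Richardson rule; the content is the same.

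One small step is missing at the very end: the chain $\bigoplus_{\mu\in P_n}\Tor_i(J,\kk)_{\bm\mu}=\Tor_i(J,\kk)$ needs the fact that $\Tor_i(J,\kk)_{\bma}=0$ unless $\bma\in P_n$. This is not automatic from your setup (you only compare the two sides in partition multidegrees), but it follows in one line from the Taylor resolution of $J$: every nonzero multidegree of $\Tor_i(J,\kk)$ is $\lcm$ of a subset of the generators $x^{\lambda^k}$, and a coordinatewise maximum of partitions is again a partition. This is exactly how the paper closes the argument, and with that sentence added your proof is complete.
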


\begin{proof}
Let $I=\langle \lambda^1,\cdots, \lambda^r \rangle_{\sym_n}$ and $J=(x^{\lambda^1},\cdots,x^{\lambda^r})$.
For a subset $\Lambda\subset P_n$ we define the partition $\lcm(\Lambda)\in P_n$ by
 \[\lcm(\Lambda)_i = \max\{\lambda_i \mid \lambda\in\Lambda\}.\]
Also, for a subset $G\subseteq [r]$ we write
\[\lcm(G) = \lcm(\{\lambda^i \mid i\in G\}).\]
Consider the simplicial complex
\[ X_{<\mu} = \{ G\subseteq[r] \mid \lcm(G) < \mu\}.\]
It follows from \cite[Theorem 1.11]{BS} (see also the proof of \cite[Theorem 2.1]{GPW}) that
\[\Tor_i(J,\kk)_{\bm \mu} \cong \widetilde H_{i-1}(X_{<\mu}).
\]
Then it follows that in order to prove \eqref{eq:6.1-1}, it suffices to show that the complexes $\Delta^{\mu,\bm 0}(I)$ and $X_{<\mu}$ are homotopy equivalent, which we do next.

If $\mu=(d_1^{p_1},\dots,d_s^{p_s},0^{p_{s+1}}) \in P_n$ with $d_1>\cdots>d_s>0$, then we have
\[ \lambda<\mu \Longleftrightarrow \lambda\leq\mu\setminus\ee_i\text{ for some }i=1,\cdots,s.\]
It follows that the facets of $X_{<\mu}$ are 
\[ G_i = \{j\in[r] \mid \lambda^j\leq \mu \setminus\ee_i\}\text{ for }i=1,\cdots,s.\]
The Nerve Theorem (see for instance \cite[Theorem 10.6]{Bj}) implies that $X_{<\mu}$ is homotopy equivalent to the \defi{nerve} of $G_1,\cdots,G_s$, which is the simplicial complex 
$$\mathcal N(G_1,\dots,G_s)=\left\{F \subset [s] \mid \bigcap_{i \in F} G_i \ne \varnothing \right\}.$$
We note that $\bigcap_{i \in F} G_i \ne \varnothing$ is equivalent to the fact that for some $\lambda^j$ we have
\[ \lambda^j \leq \mu\setminus \ee_i \text{ for all }i\in F.\]
This is further equivalent to $\lambda^j \leq \mu\setminus \ee_F$, which shows that
\[\bigcap_{i \in F} G_i \ne \varnothing \Longleftrightarrow \mu\setminus \ee_F \in P(I).\]
It follows from \eqref{eq:def-Del-mu-c-I} that $\mathcal N(G_1,\dots,G_s)=\Delta^{\mu,\bm 0}(I)$, so $X_{<\mu}$ is homotopy equivalent to $\Delta^{\mu,\bm 0}(I)$, proving \eqref{eq:6.1-1}.

 We now assume that $\mathrm{char}(\kk)=0$ or $\mathrm{char}(\kk)>n$ and prove \eqref{eq:6.1-2}.
Using the Taylor resolution of $J$ \cite[\S 7.1]{HH}, we have that if $\Tor_i(J,\kk)_{\bma}\neq 0$ for some $\bma\in\bb{Z}^n_{\geq 0}$ then $\bma = \lcm(\Lambda)$ for some subset $\Lambda\subseteq\{\lambda^1,\dots, \lambda^r\}$, and in particular $\bma\in P_n$.
Thus, to prove \eqref{eq:6.1-2}, it is then enough to show that
\[
\Tor_i(I,\kk)^{\sym_n}_{\langle \mu \rangle} \cong \Tor_i(J,\kk)_{\bm \mu} \text{ for all }\mu\in P_n.
\]
It follows from the Littlewood--Richardson rule (see e.g., \cite[Theorem 4.9.14]{Sa}) that
$$\left( \mathcal S^{((p_1,1^{q_1}),\dots,(p_s,1^{q_s}))}\right)^{\sym_n} \cong
\begin{cases}
\kk & \mbox{ if } q_1=\cdots=q_s=0;\\
0 & \mbox{ otherwise.}
\end{cases}
$$
Thus Theorem \ref{thm:main} and \eqref{eq:6.1-1} imply the desired isomorphism
\[
\Tor_i(I,\kk)^{\sym_n}_{\langle \mu \rangle}
\cong \widetilde H_{i-1}(\Delta^{\mu,\bm 0}(I))
\cong \Tor_i(J,\kk)_{\bm \mu}.\qedhere
\]
\end{proof}

\section{Primary decomposition and extremal Betti numbers}
\label{sec:prim-dec-extr-Betti}

The goal of this section is to describe a primary decomposition for any $\sym_n$-invariant monomial ideal $I$, and to study the extremal Betti numbers of $I$. As an application, we recover using Theorem~\ref{thm:main} the formulas from \cite{Ra} for the Castelnuovo--Mumford regularity, and for the projective dimension of~$I$. To formulate our results, we consider the set of \defi{extended partitions}
$$P_n^\infty=\{(\lambda_1,\dots,\lambda_n) \in (\ZZ_{\geq 0} \cup \{\infty\})^n \mid \lambda_1 \geq \cdots \geq \lambda_n \geq 0\}$$
where $\infty \geq a$ for any $a \in \ZZ_{\geq 0} \cup \{\infty\}$, for which a partial order is constructed in Section~\ref{subsec:max-dual-gens}. In Section~\ref{subsec:prim-dec} we determine a finite subset $\Lambda^*(I)\subset P_n^{\infty}$ describing a natural primary decomposition of $I$, and refer to $\Lambda^*(I)$ as the set of \defi{dual generators} of $I$. If $\rho=(\infty^{p_0},d_1^{p_1},\cdots,d_s^{p_s})\in P_n^{\infty}$, with $\infty>d_1>\cdots>d_s\geq 0$, then we write 
\begin{equation}\label{eq:def-l-rho}
\ell(\rho)=p_0
\end{equation}
for the number of $\infty$ terms in $\rho$, and let
\begin{equation}\label{eq:def-tilde-rho}
\widetilde{\rho} = ((d_1+1)^{p_0+p_1},(d_2+1)^{p_2},\cdots,(d_s+1)^{p_s}).
\end{equation}

In analogy with the multigraded version of extremal Betti numbers from \cite[p.~507]{BCP}, we say that a pair $(i,\lambda) \in [n] \times P_n$ is \defi{extremal} in the Betti table of $R/I$ if
\begin{itemize}
\item[(a)] $\Tor_i(R/I,\kk)_{\langle \lambda \rangle} \ne 0$, and
\item[(b)] $\Tor_j(R/I,\kk)_{\langle \mu \rangle}=0$ for all $j \geq i$ and $\mu \gneq \lambda$ with $|\mu|-j \geq |\lambda|-i$.
\end{itemize}
If $(i,\lambda)$ is extremal, the \defi{extremal Betti number} $\beta_{i,\lambda}(I)$ is $\dim_{\kk}\Tor_i(R/I,\kk)_{\bm \lambda}$ (which is equal to $\beta_{i,\sigma\cdot\lambda}(I)$ for all $\sigma\in\sym_n$). The main result of this section is the following.

\begin{theorem}
\label{thm:extremal}
For any $\sym_n$-invariant monomial ideal $I$, we have
\begin{align*}
&\{(i,\lambda) \in [n] \times P_n \mid (i,\lambda) \mbox{ is an extremal pair in the Betti table of $R/I$}\}
\\
&= \{ \left(n-\ell(\rho),\widetilde \rho \right )\mid \rho\in\Lambda^*(I) \mbox{ is a maximal dual generator of }I\}.
\end{align*}
Moreover, if $(i,\lambda)$ is the extremal pair associated to $\rho=(\infty^{p_0},d_1^{p_1},\cdots,d_s^{p_s})$, then the corresponding extremal Betti number is $\beta_{i,\lambda}={p_0+p_1-1\choose p_0}$.
\end{theorem}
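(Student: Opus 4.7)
The plan is to combine Theorem~\ref{thm:main} with the combinatorial structure of $\Lambda^*(I)$ developed in Section~\ref{subsec:prim-dec} in order to identify extremal pairs and compute the corresponding Betti numbers. Via $\Tor_i(R/I, \kk) \cong \Tor_{i-1}(I, \kk)$ for $i \geq 1$, Theorem~\ref{thm:main} translates extremality condition (a) into the existence of some $\bmc$ with $\gamma^{\lambda, \bmc}_{i - 1 - |\bmc|}(I) \neq 0$, and condition (b) into the vanishing of $\gamma^{\mu, \bmc'}_{j - 1 - |\bmc'|}(I)$ for all $\mu \gneq \lambda$, $j \geq i$, $|\mu| - j \geq |\lambda| - i$, and admissible $\bmc'$. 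Everything thus reduces to claims about top reduced homology of the simplicial complexes $\Delta^{\mu, \bmc}(I)$.

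For the forward direction, I fix a maximal dual generator $\rho = (\infty^{p_0}, d_1^{p_1}, \dots, d_s^{p_s})$ and set $(i, \lambda) := (n - p_0, \widetilde\rho)$, with distinguished witness vector $\bmc_0 := (p_1 - 1, p_2 - 1, \dots, p_s - 1)$. Since $|\bmc_0| = n - p_0 - s$, the gamma index becomes $s - 1$, matching the top reduced homology degree of $\Delta^{\widetilde\rho, \bmc_0}(I)$, which lives on $[s]$. A direct computation shows
\[
\widetilde\rho \setminus (\bmc_0 + \ee_{[s]}) = \big((d_1+1)^{p_0}, d_1^{p_1}, d_2^{p_2}, \dots, d_s^{p_s}\big),
\]
which lies in the order ideal generated by $\rho$ in $P_n^\infty$ and hence $\notin P(I)$, while for $F \subsetneq [s]$ the partition $\widetilde\rho \setminus (\bmc_0 + \ee_F)$ strictly exceeds $\rho$ in some coordinate and, by maximality of $\rho$, is not dominated by any dual generator. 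Thus $\Delta^{\widetilde\rho, \bmc_0}(I)$ is the boundary of the $(s-1)$-simplex, with $\widetilde H_{s-2} = \kk$, verifying (a). For (b), a nonvanishing $\gamma^{\mu, \bmc'}_{j - 1 - |\bmc'|}(I)$ in the extremal range would produce, via the combinatorics of $\Delta^{\mu, \bmc'}(I)$, a dual generator $\rho' \in \Lambda^*(I)$ strictly dominating $\rho$, contradicting maximality. The reverse direction inverts the correspondence: given an extremal pair $(i, \lambda)$ with $\lambda = (e_1^{q_1}, \dots, e_s^{q_s})$, one sets $p_0 = n - i$ and $\rho = (\infty^{p_0}, (e_1-1)^{q_1 - p_0}, (e_2-1)^{q_2}, \dots, (e_s-1)^{q_s})$, and extremality forces $\rho$ to be a maximal element of $\Lambda^*(I)$.

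Finally, for the Betti number, Theorem~\ref{thm:main} together with the hook-length formula $\dim S^{(p - c, 1^c)} = \binom{p - 1}{c}$ yields
\[
\dim \Tor_{i-1}(I, \kk)_{\bm \widetilde\rho} = \sum_{\bmc \leq \bm p(\widetilde\rho)} \gamma^{\widetilde\rho, \bmc}_{i - 1 - |\bmc|}(I) \cdot \prod_{k=1}^{s} \binom{q_k - 1}{c_k},
\]
where $q_1 = p_0 + p_1$ and $q_k = p_k$ for $k \geq 2$. The analysis above ensures that only $\bmc = \bmc_0$ contributes a nonzero summand (with $\gamma = 1$), and the corresponding product collapses via $\binom{p_0 + p_1 - 1}{p_1 - 1} \cdot \prod_{k \geq 2} \binom{p_k - 1}{p_k - 1} = \binom{p_0 + p_1 - 1}{p_0}$, as claimed. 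The main obstacle is the combinatorial correspondence between poset-theoretic maximality of $\rho$ and the topology of $\Delta^{\widetilde\rho, \bmc_0}(I)$: verifying that this complex is precisely the simplex boundary requires carefully ruling out domination of the relevant partitions by any dual generator other than $\rho$, and the vanishing of all other $\gamma^{\mu, \bmc'}$ in the extremal range similarly hinges on the fine structure of $\Lambda^*(I)$ built in Section~\ref{subsec:prim-dec}.
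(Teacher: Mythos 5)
Your outline follows the same skeleton as the paper's argument (your claim that $\Delta^{\widetilde\rho,\bmc_0}(I)$ is the boundary of the $(s-1)$-simplex is exactly Lemma~\ref{lem:maximal}, stated there Alexander-dually as $\Gamma^{\widetilde\rho,\bmc_0}(I)=\{\varnothing\}$), but the steps that carry the weight of the theorem are asserted rather than proved. First, the statement that for $F\subsetneq[s]$ the partition $\widetilde\rho\setminus(\bmc_0+\ee_F)=\rho^++\eee_k$ is ``not dominated by any dual generator, by maximality of $\rho$'' is not a formal consequence of $\preccurlyeq$-maximality: it is Lemma~\ref{lem:order}(ii), whose proof needs the irredundancy condition \eqref{eq:dual-gens-incomp} together with the reformulation \eqref{eq:equiv-prec-on-LamI} of $\preccurlyeq$ and a case analysis. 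Second, and more seriously, for condition (b) you say that a nonvanishing $\gamma^{\mu,\bmc'}$ in the extremal range ``would produce a dual generator strictly dominating $\rho$'' --- but producing it is the hard part, and no mechanism is offered. The paper needs a topological input (Lemma~\ref{lem:cone}: nonzero $\widetilde H_{i-1}$ forces a facet of size $\geq i$ avoiding the vertex $1$) plus the partition analysis of Lemma~\ref{lem:claim} to convert such a facet into a dual generator satisfying the three inequalities of Lemma~\ref{lem:6.8}; only then does maximality give a contradiction. Likewise your reverse inclusion --- define $\rho=(\infty^{n-i},(e_1-1)^{q_1-(n-i)},\dots)$ from an extremal pair and declare that extremality forces $\rho\in\Lambda^*_{\max}(I)$ --- has no visible route: nothing in the definition of extremality tells you that this $\rho$ is a dual generator at all (nor that $q_1\geq n-i$, nor that $\lambda$ has no zero parts). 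The paper obtains this inclusion instead by combining Lemma~\ref{lem:6.8} (every nonvanishing position is dominated by a maximal-generator position) with the nonvanishing of Lemma~\ref{lem:maximal}.

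There is also a gap in the Betti number computation: ``the analysis above ensures that only $\bmc=\bmc_0$ contributes'' is not ensured by anything above. Since ${\bm p}(\widetilde\rho)_1=p_0+p_1-1$, vectors $\bmc'$ with $c_1'\geq p_1$ are admissible, and their contribution to $\Tor_{n-p_0}(R/I,\kk)_{\langle\widetilde\rho\rangle}$ sits in homology degree $|\bmc'|-|\bmc_0|-1\geq 0$, so it does not vanish for degree reasons; your condition-(b) discussion concerns only $\mu\gneq\widetilde\rho$ and says nothing about these terms at $\mu=\widetilde\rho$ itself. Ruling them out requires a separate argument in the spirit of Lemmas~\ref{lem:cone} and~\ref{lem:claim} (a nonempty facet avoiding $1$ yields $\mu'$ below some dual generator, which maximality forces to be $\rho$, and $\mu'\leq\rho$ then forces the facet to be empty), or the paper's alternative route via $\Ext$ (Proposition~\ref{prop:Ext-extremal} together with the structure results quoted from \cite{Ra}). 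You honestly flag these verifications as ``the main obstacle,'' but they are precisely the content of the theorem; as written the proposal is a plausible restatement of the paper's strategy with its decisive lemmas missing.
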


We prove Theorem~\ref{thm:extremal} in Section~\ref{subsec:extr-Betti} using a reformulation of Theorem~\ref{thm:main} via Alexander duality, which is explained in Section~\ref{subsec:main-thm-Alex-dual}. In Section~\ref{subsec:extr-Betti-Ext} we discuss the relationship between our results and the combinatorics used in \cite{Ra}, and explain the relation of Theorem~\ref{thm:extremal} to the study of $\Ext$ modules.

\subsection{Primary decompositions of $\sym_n$-invariant monomial ideals}
\label{subsec:prim-dec}

We begin by recalling a canonical primary decomposition for a monomial ideal \cite[Lemma~3.1]{HH}.

\begin{lemma}
\label{lem:6.1}
Every monomial ideal $I$ of $R$ has a presentation
\begin{align}
\label{6-1}
I=Q_1 \cap Q_2 \cap \cdots \cap Q_r,
\end{align}
where each $Q_i$ is an ideal of the form $(x_{i_1}^{a_1},\dots,x_{i_k}^{a_k})$.
Moreover, such a presentation is unique if it is irredundant, i.e., if none of the ideals $Q_i$ can be omitted from \eqref{6-1}.
\end{lemma}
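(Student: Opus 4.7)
The plan is to prove Lemma~\ref{lem:6.1} in two parts: existence by an iterative splitting procedure, and uniqueness via a combinatorial encoding of irreducible monomial ideals.

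For existence, the key tool is the elementary identity
\[
J + (uv) = (J + (u)) \cap (J + (v))
\]
valid for any monomial ideal $J$ and monomials $u, v$ with $\gcd(u,v)=1$. The inclusion $\subseteq$ is immediate, and for $\supseteq$ one observes that a monomial lying in the intersection but not in $J$ must be divisible by both $u$ and $v$, hence by $uv$. I would then induct on the complexity $\sigma(I)=\sum_{m\in G(I)}(|\supp(m)|-1)$, where $G(I)$ denotes the minimal monomial generating set of $I$. When $\sigma(I)=0$, every generator is a pure power $x_i^{a_i}$ and, after removing redundant powers of the same variable, $I$ itself is already of the required form. When $\sigma(I)>0$, pick a generator $m$ with $|\supp(m)|\geq 2$ and any $i\in\supp(m)$, factor $m=x_i^{a_i}\cdot(m/x_i^{a_i})$ into pieces with disjoint support, and apply the identity to write $I$ as the intersection of two monomial ideals of strictly smaller complexity; recursion then yields the decomposition.

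For uniqueness, I would encode each irreducible monomial ideal $Q=(x_{i_1}^{a_1},\ldots,x_{i_k}^{a_k})$ by the vector $\rho=\rho(Q)\in(\ZZ_{\geq 0}\cup\{\infty\})^n$ with $\rho_{i_j}=a_j$ and $\rho_\ell=\infty$ otherwise. A direct check gives $Q_\rho\supseteq Q_{\rho'}$ if and only if $\rho\leq\rho'$ coordinatewise (with the convention $a\leq\infty$ for every $a\in\ZZ_{\geq 0}$). Moreover, the monomials \emph{outside} $Q_\rho$ form a ``box'' $B_\rho=\{\bmc\in\ZZ_{\geq 0}^n:c_j<\rho_j \text{ for all } j \text{ with } \rho_j<\infty\}$, so the complement of $\bigcap_i Q_{\rho_i}$ in the monomial lattice is the union $\bigcup_i B_{\rho_i}$. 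An irredundant decomposition thus corresponds to a covering of this complement by boxes in which no box lies in the union of the others. I would conclude by identifying the $\rho_i$ in such a decomposition with the maximal elements of $\{\rho : Q_\rho\supseteq I\}$, which are intrinsically determined by $I$ and hence unique.

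The main technical obstacle lies in the final step of uniqueness: showing that in any irredundant decomposition each $\rho_i$ is indeed \emph{maximal} among vectors with $Q_\rho\supseteq I$, and conversely that every such maximal vector must appear. The forward direction is straightforward, since strictly enlarging $\rho_i$ would produce $Q_{\rho'_i}\subsetneq Q_{\rho_i}$ with $I\subseteq Q_{\rho'_i}$, rendering $Q_{\rho_i}$ redundant. The converse requires exhibiting, for each maximal $\rho$, an explicit monomial witnessing that $Q_\rho$ cannot be dropped; the natural candidates are the ``corner'' monomials of $B_\rho$ (taking $c_j=\rho_j-1$ for the finite coordinates), whose existence and required membership properties rely on the finiteness of $G(I)$ together with the maximality of $\rho$.
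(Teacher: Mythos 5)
The paper does not actually prove this lemma; it quotes it from Herzog--Hibi, so the only comparison available is with the standard argument. Your existence half is correct and is essentially that standard argument: the splitting identity $J+(uv)=(J+(u))\cap(J+(v))$ for coprime monomials is right, and the induction on $\sigma(I)$ goes through because the minimal generators of each factor lie in $G(J)\cup\{x_i^{a_i}\}$, resp.\ $G(J)\cup\{m/x_i^{a_i}\}$, so the complexity strictly drops.

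The genuine gap is in the uniqueness half, and it is precisely the step you flag but do not close. Both directions of your identification rest on a lemma you never state or prove: if an ideal generated by pure powers of variables contains a finite intersection of monomial ideals, it contains one of them --- equivalently, in your box language, a box contained in a finite union of boxes is contained in one of them. Your ``straightforward'' forward direction does not work as written: from $\rho'\gneq\rho_i$ with $I\subseteq Q_{\rho'}\subsetneq Q_{\rho_i}$ you may \emph{replace} $Q_{\rho_i}$ by $Q_{\rho'}$ and still intersect to $I$, but ``redundant'' means \emph{omittable}, i.e.\ $\bigcap_{j\neq i}Q_{\rho_j}\subseteq Q_{\rho_i}$, and that does not follow from the mere existence of $\rho'$. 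With the missing lemma it does: $\bigcap_j Q_{\rho_j}=I\subseteq Q_{\rho'}$ forces $Q_{\rho_j}\subseteq Q_{\rho'}\subsetneq Q_{\rho_i}$ for some $j$, necessarily $j\neq i$, so $Q_{\rho_i}$ can be omitted; and the same lemma plus maximality of $\rho$ gives your converse at once (some $\rho_j\geq\rho$, hence $\rho_j=\rho$), so the ``main technical obstacle'' dissolves entirely once the lemma is proved. Its proof is your corner idea, but stated carefully: take $c_j=\rho_j-1$ in the finite coordinates and, in the $\infty$-coordinates, values strictly larger than every finite entry of every box in the covering (so the needed finiteness is that of the decomposition, not of $G(I)$); whichever box contains this point must then dominate $\rho$ in every coordinate. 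On the ideal side this is the one-line observation that if monomials $u,v\notin Q_\rho$ then $\lcm(u,v)\notin Q_\rho$, since a pure power dividing $\lcm(u,v)$ divides $u$ or $v$. Until this lemma is in place, your uniqueness argument is incomplete.
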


To describe the irredundant presentation \eqref{6-1} for an $\sym_n$-invariant monomial ideal, we define for each $\mu=(\infty,\dots,\infty,\mu_k,\dots,\mu_n) \in P_n^\infty$ with $\mu_k < \infty$, the ideal
\[
Q_\mu=\bigcap_{\sigma \in \sym_n} \sigma(x_k^{\mu_k+1},\dots,x_n^{\mu_n+1}).
\]
If $I \subset R$ is an $\sym_n$-invariant monomial ideal, its irredundant presentation \eqref{6-1} is preserved by the $\sym_n$-action. Therefore, if $\sigma \in \sym_n$ then $\sigma(Q_k)=Q_l$ for some $1 \leq l \leq r$.
This fact and Lemma \ref{lem:6.1} imply the following.

\begin{lemma}
\label{lem:decomposition}
Let $I \subset R$ be an $\sym_n$-invariant monomial ideal.
Then there are unique elements $\mu^1,\cdots,\mu^t \in P_n^\infty$ such that
\begin{align}
\label{6-2}
I= Q_{\mu^1} \cap \cdots \cap Q_{\mu^t}
\end{align}
and none of the ideals $Q_{\mu^k}$ can be omitted in the above presentation.
\end{lemma}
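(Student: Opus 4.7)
The plan is to lift the unique irredundant decomposition guaranteed by Lemma~\ref{lem:6.1} to an $\sym_n$-equivariant version by bundling its components into $\sym_n$-orbits.

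First I would apply Lemma~\ref{lem:6.1} to obtain the unique irredundant presentation $I=Q_1\cap\cdots\cap Q_r$, in which each $Q_i$ is of the form $(x_{i_1}^{a_1},\dots,x_{i_k}^{a_k})$. The $\sym_n$-invariance of $I$ ensures that for every $\sigma\in\sym_n$, the intersection $\sigma(Q_1)\cap\cdots\cap\sigma(Q_r)=I$ is another irredundant presentation of the form described in Lemma~\ref{lem:6.1}; by the uniqueness there, $\sigma$ must permute the set $\{Q_1,\dots,Q_r\}$. I would then partition this set into $\sym_n$-orbits $O_1,\dots,O_t$ and set $J_l=\bigcap_{Q\in O_l}Q$. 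The key calculation is to verify that $J_l=Q_{\mu^l}$ for a unique $\mu^l\in P_n^\infty$: writing a representative $Q\in O_l$ as $(x_{i_1}^{a_1},\dots,x_{i_r}^{a_r})$ with sorted exponents $b_1\geq\cdots\geq b_r>0$, one takes $\mu^l=(\infty^{n-r},b_1-1,\dots,b_r-1)$ and checks that the $\sym_n$-orbit of $(x_{i_1}^{a_1},\dots,x_{i_r}^{a_r})$ coincides as a set of ideals with the $\sym_n$-orbit of $(x_{n-r+1}^{b_1},\dots,x_n^{b_r})$ appearing in the definition of $Q_{\mu^l}$. Existence of a decomposition $I=Q_{\mu^1}\cap\cdots\cap Q_{\mu^t}$ then follows. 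For irredundancy, I would observe that omitting $Q_{\mu^l}$ from this new presentation amounts to removing the entire orbit $O_l$ from the Lemma~\ref{lem:6.1} presentation, which must strictly enlarge the intersection.

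For uniqueness, suppose $I=Q_{\nu^1}\cap\cdots\cap Q_{\nu^s}$ is another irredundant presentation of the asserted shape. Each $Q_{\nu^m}$ is the intersection over a uniquely determined $\sym_n$-orbit $O(\nu^m)$ of ideals of the form in Lemma~\ref{lem:6.1}, and the orbits corresponding to distinct $\nu^m$ are disjoint. Refining yields $I=\bigcap_m\bigcap_{Q\in O(\nu^m)}Q$, a presentation of the form considered in Lemma~\ref{lem:6.1}. Any redundancy among its individual components would, by $\sym_n$-symmetry, make an entire orbit $O(\nu^m)$ redundant, hence make $Q_{\nu^m}$ itself omittable from the original decomposition, contradicting the hypothesis. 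Thus the refined intersection is itself irredundant, and by the uniqueness in Lemma~\ref{lem:6.1} it must coincide with $\{Q_1,\dots,Q_r\}$; matching orbits then gives $\{\nu^1,\dots,\nu^s\}=\{\mu^1,\dots,\mu^t\}$. The step that will require the most care is the identification $J_l=Q_{\mu^l}$, which relies on careful bookkeeping of exponents against variable indices to match the two orbit descriptions; every other step is a symmetry argument built on top of Lemma~\ref{lem:6.1}.
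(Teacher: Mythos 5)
Your proposal is correct and follows essentially the same route as the paper, which merely observes that the $\sym_n$-action permutes the components of the unique irredundant presentation from Lemma~\ref{lem:6.1} and then groups them into orbits, each orbit intersection being a $Q_\mu$. The only step deserving one extra line is the claim that a single redundant component forces its entire orbit to be simultaneously omittable (individually redundant components cannot in general all be removed at once): justify it by deleting that component first, passing to an irredundant presentation of the remaining components, and invoking the uniqueness (hence $\sym_n$-stability) of the decomposition in Lemma~\ref{lem:6.1}.
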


We call \eqref{6-2} the \defi{irredundant decomposition} of $I$, and call $\mu^1,\dots,\mu^t$ the \defi{dual generators} of $I$. We write
\begin{equation}\label{eq:def-lam-star}
\Lambda^*(I)=\{\mu^1,\dots,\mu^t\},
\end{equation}
and note that the condition that \eqref{6-2} is irredundant implies that
\begin{equation}\label{eq:dual-gens-incomp}
    \mu^i \not\leq \mu^j \text{ for }1\leq i\neq j\leq t.
\end{equation}

\begin{remark}
\label{rem:decomposition}
If $\mu=(\infty^{p_0},d_1^{p_1},\dots,d_m^{p_m})$ with $\infty >d_1> \cdots >d_m \geq 0$,
then
$$Q_{\mu}=\big\langle \big((d_1+1)^{p_0+1}\big),\big((d_2+1)^{p_0+p_1+1}\big),\dots,\big((d_m+1)^{p_0+p_1+\cdots+p_{m-1}+1}\big)\big\rangle _{\sym_n}.$$
The ideals $Q_\mu$ are therefore the $\sym_n$-invariant ideals generated by a set of rectangular partitions. Combining the formula for $Q_{\mu}$ with
\begin{align*}
\langle (d_1^{p_1},\dots,d_s^{p_s})\rangle_{\sym_n}=
\langle (d_1^{p_1})\rangle_{\sym_n} \cap
\langle (d_2^{p_1+p_2})\rangle_{\sym_n}
\cap \cdots \cap
\langle (d_s^{p_1+p_2+ \cdots+p_s})\rangle_{\sym_n}
\end{align*}
provides a way to compute the presentation \eqref{6-2}.
For example, we have
\begin{align*}
\langle (4,1,1),(5,2,0)\rangle_{\sym_3}&= \langle (4,0,0),(5,2,0) \rangle_{\sym_3} \cap \langle (1,1,1),(5,2,0) \rangle_{\sym_3}\\
&= \langle (4,0,0)\rangle_{\sym_3} \cap \langle (1,1,1),(5,0,0)\rangle_{\sym_3} \cap \langle (1,1,1),(2,2,0)\rangle_{\sym_3}\\
&=Q_{(3,3,3)} \cap Q_{(4,4,0)} \cap Q_{(\infty,1,0)},
\end{align*}
where for the first two equalities we used 
\[
\langle (4,1,1) \rangle_{\sym_3}= \langle (4,0,0) \rangle_{\sym_3} \cap \langle (1,1,1) \rangle_{\sym_3}\text{ and }
\langle (5,2,0) \rangle_{\sym_3}= \langle (5,0,0) \rangle_{\sym_3} \cap \langle (2,2,0) \rangle_{\sym_3}.
\]
We conclude that $\Lambda^*(\langle (4,1,1),(5,2,0)\rangle_{\sym_3})=\{(3,3,3),(4,4,0),(\infty,1,0)\}$.
\end{remark}

To shed more light on the set $\Lambda^*(I)$, we define
$$O(I)=P_n \setminus P(I)=\{\lambda \in P_n \mid x^\lambda \not \in I\},$$
which is the set of all partitions that are not in $I$. The irredundant decomposition \eqref{6-2} is related to $O(I)$ as follows. For $\mu=(\infty,\dots,\infty,\mu_k,\dots,\mu_n) \in P_n^\infty$, let
$$O_\mu=\{ \lambda \in P_n \mid \lambda \leq \mu\}.$$
One can check that
$$O_\mu=O(Q_{\mu}),$$
hence Lemma~\ref{lem:decomposition} implies that for any $\sym_n$-invariant monomial ideal $I \subset R$, one has
\begin{align}
\label{6-3}
O(I)= \bigcup_{\mu \in \Lambda^*(I)} O_\mu.
\end{align}
Moreover, the decomposition \eqref{6-3} is irredundant (that is, no $O_{\mu}$ can be omitted). 

\subsection{A reformulation of Theorem~\ref{thm:main} via Alexander duality}
\label{subsec:main-thm-Alex-dual}

With the notation in Section~\ref{sec:main-thm}, we define for $\mu=(d_1^{p_1},\dots,d_s^{p_s},0^{p_{s+1}})$ and $\bm c \leq \bf p(\mu)$ the simplicial complex
\begin{equation}\label{eq:def-Gamma-mu-c}
\Gamma^{\mu,\bm c}(I)=\{ F \subseteq [s] \mid \mu \setminus (\bm c + \ee_{[s]\setminus F}) \in O(I)\}.
\end{equation}
In other words,
$\Gamma^{\mu,\bm c}(I)=\{F \subseteq [s] \mid [s] \setminus F \not \in \Delta^{\mu,\bm c}(I)\}$, that is, $\Gamma^{\mu,\bm c}(I)$ is the \defi{Alexander dual} of $\Delta^{\mu,\bm c}$. We have by \cite[Lemma 5.5.3]{BH} that
$$\widetilde H_{i-2}(\Delta^{\mu,\bm c}(I)) \cong \widetilde H_{s-i-1} (\Gamma^{\mu,\bm c}(I)).$$
Using this isomorphism, Theorem \ref{thm:main} can be rewritten as follows.

\begin{theorem}
\label{thm:main6}
Let $\mu=(d_1^{p_1},\dots,d_s^{p_s},0^{p_{s+1}}) \in P_n$ and let $I \subset R$ be an $\sym_n$-invariant monomial ideal.
We have an isomorphism of $\kk$-vector spaces
$$\Tor_i(R/I,\kk)_{\langle \mu \rangle}
\cong \bigoplus_{\bm 0 \leq\bmc\leq \bf p(\mu)} \left ( \mathcal S^{((p_1-c_1,1^{c_1}),\dots,(p_s-c_s,1^{c_s}),(p_{s+1}))} \right)^{\dim_\kk \widetilde H_{s-i-1+|\bm c|}(\Gamma^{\mu,\bm c}(I))},$$
which is in addition an isomorphism of $\sym_n$-modules when $\mathrm{char}(\kk)=0$ or $\mathrm{char}(\kk)>n$.
\end{theorem}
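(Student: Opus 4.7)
The plan is to deduce Theorem~\ref{thm:main6} as a formal consequence of Theorem~\ref{thm:main}, by combining two standard manipulations: an index shift coming from the passage from $I$ to $R/I$, followed by combinatorial Alexander duality applied summand by summand.

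First, I would exploit the $\sym_n$-equivariant short exact sequence $0 \to I \to R \to R/I \to 0$. Tensoring with $\kk$ and using the associated long exact sequence of $\Tor$, together with the freeness of $R$ (so $\Tor_i(R,\kk)=0$ for $i\geq 1$ and $\Tor_0(R,\kk)\cong\kk$ is concentrated in multidegree $\mathbf{0}$), yields an $\sym_n$-equivariant isomorphism
\[
\Tor_i(R/I,\kk)_{\langle \mu \rangle} \cong \Tor_{i-1}(I,\kk)_{\langle \mu \rangle}
\]
whenever $i\geq 1$ and $\mu\neq (0^n)$. The remaining boundary cases ($i=0$ with $\mu\neq\mathbf{0}$, or $\mu=(0^n)$) can be disposed of directly: on the algebraic side the relevant $\Tor$ groups vanish except for $\Tor_0(R/I,\kk)_{\langle(0^n)\rangle}\cong \kk$, and on the combinatorial side, since $s=0$ forces $\bmc$ to be the empty tuple, the formula reduces to the reduced homology of $\Gamma^{(0^n),\varnothing}(I)$, which is $\{\varnothing\}$ (contributing $\widetilde{H}_{-1}\cong \kk$) exactly when $1\notin I$.

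Next, I would apply Theorem~\ref{thm:main} with $i$ replaced by $i-1$, rewriting each multiplicity as
\[
\gamma_{i-1-|\bmc|}^{\mu,\bmc}(I)=\dim_{\kk}\widetilde{H}_{i-|\bmc|-2}(\Delta^{\mu,\bmc}(I)).
\]
Finally, I would invoke the combinatorial Alexander duality isomorphism
\[
\widetilde{H}_{i-|\bmc|-2}(\Delta^{\mu,\bmc}(I)) \cong \widetilde{H}_{s-i-1+|\bmc|}(\Gamma^{\mu,\bmc}(I))
\]
cited from \cite[Lemma~5.5.3]{BH}, which by construction connects each $\Delta^{\mu,\bmc}(I)$ with its Alexander dual $\Gamma^{\mu,\bmc}(I)$ on the vertex set $[s]$. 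None of these steps introduces any new group action, so the $\sym_n$-equivariance under the hypothesis $\mathrm{char}(\kk)=0$ or $\mathrm{char}(\kk)>n$ is inherited directly from the corresponding assertion in Theorem~\ref{thm:main}. The only point requiring any care is matching the three homological index shifts ($i\mapsto i-1$ from the SES, $i\mapsto i-1-|\bmc|$ from Theorem~\ref{thm:main}, and $j\mapsto s-j-3$ from Alexander duality), but this is a routine bookkeeping exercise rather than a genuine obstacle.
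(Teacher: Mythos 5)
Your proposal is correct and follows essentially the same route as the paper: the paper treats Theorem~\ref{thm:main6} as a direct reformulation of Theorem~\ref{thm:main}, using the standard shift $\Tor_i(R/I,\kk)_{\langle\mu\rangle}\cong\Tor_{i-1}(I,\kk)_{\langle\mu\rangle}$ (left implicit there) together with the same combinatorial Alexander duality isomorphism $\widetilde H_{i-2}(\Delta^{\mu,\bmc}(I))\cong\widetilde H_{s-i-1}(\Gamma^{\mu,\bmc}(I))$ from \cite[Lemma~5.5.3]{BH}. Your explicit handling of the index bookkeeping and of the boundary cases ($i=0$ or $\mu=(0^n)$) is a harmless elaboration of what the paper leaves to the reader.
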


\subsection{Maximal dual generators}
\label{subsec:max-dual-gens}

In what follows we introduce a partial order on the set of dual generators $\Lambda^*(I)$ in \eqref{eq:def-lam-star}, and explain how the maximal elements of $\Lambda^*(I)$ contribute to the Betti numbers of $R/I$. For $\mu=(\infty,\dots,\infty,\mu_k,\dots,\mu_n) \in P_n^\infty$ with $\mu_k \ne \infty$, we define $\ell(\mu)=k-1$ as in (\ref{eq:def-l-rho}), define $\widetilde{\mu}$ as in (\ref{eq:def-tilde-rho}), and let
\[
\mu^+=(\mu_k+1,\dots,\mu_k+1,\mu_k,\dots,\mu_n).
\]
We note that $\mu^+$ is obtained from $\mu$ by replacing $\infty$ with $\mu_k+1$, and that
\[
\widetilde{\mu} = \mu^++\eee_k+\cdots+\eee_n.
\]

We define the partial order $\preccurlyeq$ on $\Lambda^*(I)$ by $\mu \preccurlyeq \rho$ if 
\begin{equation}\label{eq:def-prec-on-LamI}
\widetilde{\mu} \leq \widetilde{\rho}\text{ and }\ell(\mu)-\ell(\rho) \leq |\widetilde{\rho}| - |\widetilde{\mu}|.
\end{equation}
Using the fact that
\begin{equation}\label{eq:size-mu+-mutilde}
    |\mu^+| = |\widetilde{\mu}| - (n-\ell(\mu)),
\end{equation}
we can rewrite the conditions \eqref{eq:def-prec-on-LamI} as
\begin{equation}\label{eq:equiv-prec-on-LamI}
\widetilde{\mu} \leq \widetilde{\rho}\text{ and }|\mu^+| \leq |\rho^+|.
\end{equation}
We write $\mu\prec\rho$ if $\mu\preccurlyeq\rho$ and $\mu\neq\rho$. We let $\Lambda^*_{\max}(I)\subseteq\Lambda^*(I)$ denote the subset of maximal elements with respect to $\preccurlyeq$, and call them \defi{maximal dual generators} of $I$.

\begin{lemma}
\label{lem:order}
Let $I$ be an $\sym_n$-invariant monomial ideal.
\begin{itemize}
\item[(i)] If $\mu,\rho \in \Lambda^*(I)$ satisfy $\mu \prec \rho$ then $\ell(\mu)>\ell(\rho)$.
\item[(ii)] If $\rho \in \Lambda^*_{\max}(I)$ then $\rho^+ \not\in O_{\mu}$ for any $\mu \in \Lambda^*(I) \setminus \{\rho\}$.
\end{itemize}
\end{lemma}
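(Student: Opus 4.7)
The plan is to derive both parts from a single template: the irredundancy of \eqref{6-2} implies that distinct elements of $\Lambda^*(I)$ are incomparable in the componentwise order on $P_n^\infty$; using this, I would show for (i) that $\mu\prec\rho$ together with $\ell(\mu)\leq\ell(\rho)$ forces $\mu\leq\rho$, and for (ii) that $\rho^+\leq\mu$ together with $\ell(\rho)\leq\ell(\mu)$ forces $\rho\leq\mu$, both yielding a contradiction. The basic identity I would use throughout is that for $\nu\in P_n^\infty$ with largest finite entry $d_1$, one has $\widetilde{\nu}_i = \nu^+_i + 1 = \nu_i+1$ for $i>\ell(\nu)$, while $\widetilde{\nu}_i = \nu^+_i = d_1+1$ for $i\leq\ell(\nu)$.

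For (i), I would argue by contradiction: suppose $\mu\prec\rho$ but $\ell(\mu)\leq\ell(\rho)$. Splitting indices into the three ranges $i\leq\ell(\mu)$, $\ell(\mu)<i\leq\ell(\rho)$, and $i>\ell(\rho)$, the first two ranges give $\mu_i\leq\rho_i$ trivially (both entries equal $\infty$, respectively $\mu_i$ is finite and $\rho_i=\infty$), while in the third range the inequality $\widetilde{\mu}_i\leq\widetilde{\rho}_i$ reduces to $\mu_i+1\leq\rho_i+1$. Thus $\mu\leq\rho$ in $P_n^\infty$ with $\mu\neq\rho$, contradicting the irredundancy of \eqref{6-2}.

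For (ii), suppose $\rho\in\Lambda^*_{\max}(I)$ and $\rho^+\leq\mu$ for some $\mu\in\Lambda^*(I)\setminus\{\rho\}$; the strategy is to show that $\rho\prec\mu$, contradicting maximality. The same three-range splitting, now applied to $\rho^+\leq\mu$, forces $\ell(\rho)>\ell(\mu)$: indeed, $\ell(\rho)\leq\ell(\mu)$ would give $\rho\leq\mu$ by arguing as in (i), contradicting irredundancy. Setting $a=\ell(\rho)>\ell(\mu)=b$ and writing $d_1,e_1$ for the largest finite entries of $\rho,\mu$, the inequality $\rho^+_{b+1}\leq\mu_{b+1}$ unpacks to $d_1+1\leq e_1$ (since $b+1\leq a$ places position $b+1$ in the $\infty$-part of $\rho$), and for $b<i\leq a$ it unpacks to $\mu_i\geq d_1+1$. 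A direct check then yields $\widetilde{\rho}\leq\widetilde{\mu}$ componentwise, and a similar computation gives
\[
|\mu^+|-|\rho^+|\ \geq\ b(e_1+1)+(a-b)(d_1+1)-a(d_1+1)\ =\ b(e_1-d_1)\ \geq\ 0,
\]
so that $\rho\preccurlyeq\mu$, and $\rho\prec\mu$ since $\mu\neq\rho$. The main obstacle I anticipate is exactly this last estimate: the hypothesis $\rho^+\leq\mu$ provides no direct information about $\mu^+$ at the positions $i\leq b$ where $\mu_i=\infty$, so one must combine the integer jump $d_1+1\leq e_1$ at the first finite position of $\mu$ with the lower bounds on $\mu_i$ in the intermediate range $b<i\leq a$ in order to make the size comparison close.
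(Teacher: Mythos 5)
Your proposal is correct. Part (i) is essentially the paper's own argument: assume $\ell(\mu)\leq\ell(\rho)$, compare entries in the three index ranges, conclude $\mu\leq\rho$, and contradict the incomparability \eqref{eq:dual-gens-incomp} of distinct dual generators. For part (ii) you follow the same underlying strategy but organize it differently. The paper argues by cases: if some finite entry of $\rho$ exceeds the corresponding entry of $\mu$ it concludes $\rho^+\not\leq\mu$ outright; otherwise it uses \eqref{eq:dual-gens-incomp} to get $\ell(\mu)<\ell(\rho)$ and then splits on whether $\mu_{\ell(\rho)}\geq\rho_{\ell(\rho)+1}+1$, in the affirmative case checking the \emph{componentwise} inequalities $\mu^+\geq\rho^+$ and $\widetilde{\mu}\geq\widetilde{\rho}$ to contradict maximality via \eqref{eq:equiv-prec-on-LamI}, and in the negative case reading off $\rho^+\not\leq\mu$ directly. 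You instead run a single contradiction from $\rho^+\leq\mu$, noting that this hypothesis automatically supplies the inequalities $e_1\geq d_1+1$ and $\mu_i\geq d_1+1$ on the middle range (the analogue of the paper's $\mu_{\ell(\rho)}\geq\rho_{\ell(\rho)+1}+1$), and then verify $\rho\prec\mu$ by checking $\widetilde{\rho}\leq\widetilde{\mu}$ componentwise together with only the \emph{size} inequality $|\rho^+|\leq|\mu^+|$, obtained from the estimate $|\mu^+|-|\rho^+|\geq b(e_1-d_1)\geq 0$ (your displayed bound is valid since the tail contribution $\sum_{i>a}(\mu_i-\rho_i)\geq 0$ is absorbed into the inequality). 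This is exactly what \eqref{eq:equiv-prec-on-LamI} requires, so the contradiction with $\rho\in\Lambda^*_{\max}(I)$ is legitimate. The two routes are logically equivalent; the paper's componentwise comparison $\mu^+\geq\rho^+$ is slightly leaner (and in fact your hypotheses also give it componentwise, so your sum estimate could be bypassed), while your uniform contradiction avoids the case analysis.
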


\begin{proof}
(i) Write $\mu=(\infty,\dots,\infty,\mu_k,\dots,\mu_n)\prec\rho=(\infty,\dots,\infty,\rho_l,\dots,\rho_n)$, and
suppose by contradiction that $\ell(\mu) \leq  \ell(\rho)$, or equivalently, that $k\leq l$.
The condition $\widetilde{\mu}\leq\widetilde{\rho}$ implies $\mu_m\leq \rho_m$ for all $l\leq m\leq n$.
Since for $1\leq m<l$ we have $\mu_m\leq\infty=\rho_m$, this shows that $\mu\leq\rho$,
contradicting \eqref{eq:dual-gens-incomp}.

(ii) 
Let $\mu=(\infty,\dots,\infty,\mu_k,\dots,\mu_n)$, $\rho=(\infty,\dots,\infty,\rho_l,\dots,\rho_n)$, and suppose that $\rho \in \Lambda^*_{\max}(I)$ and $\mu \in \Lambda^*(I) \setminus \{\rho\}$. If $\mu_m<\rho_m$ for some $l \leq m \leq n$, then $\rho^+ \not \leq \mu$, hence $\rho^+\not\in O_{\mu}$, as desired. We may therefore assume that $\mu_m \geq \rho_m$ for all $l \leq m \leq n$.
By \eqref{eq:dual-gens-incomp} we have $\rho \not \leq \mu$, hence $\mu_{l-1} \ne \infty$ and thus $k<l$. If $\mu_{l-1}\geq\rho_l+1$ then we have $\mu^+\geq\rho^+$ and $\widetilde{\mu}\geq\widetilde{\rho}$, which implies by (\ref{eq:equiv-prec-on-LamI}) that $\mu\succ\rho$, contradicting the maximality of $\rho$. It follows that $\mu_{l-1} < \rho_l+1$, so $\rho^+ \not \leq \mu$, concluding the proof.
\end{proof}

Maximal dual generators have the following contributions to Betti numbers.

\begin{lemma}
\label{lem:maximal}
Let $\rho=(\infty^{p_0},d_1^{p_1},\dots,d_{s}^{p_s}) \in \Lambda^*_{\max}(I)$, with $\infty>d_1>\cdots>d_s \geq 0$,
and let $\bmc=(p_1-1,\dots,p_s-1)$. We have $\Gamma^{\widetilde \rho,\bm c} = \{ \varnothing\}$, and
$$\beta_{n-\ell(\rho),\widetilde{\rho}}(R/I) = {p_0+p_1-1\choose p_0}.$$
\end{lemma}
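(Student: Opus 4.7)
The strategy is to verify the topological identity $\Gamma^{\widetilde{\rho},\bmc}=\{\varnothing\}$ first, then extract the Betti number from Theorem~\ref{thm:main6}. Using the multiplicity structure $\widetilde{\rho}=((d_1+1)^{p_0+p_1},(d_2+1)^{p_2},\ldots,(d_s+1)^{p_s})$ and (\ref{eq:mu-minus-c}), I compute that $\widetilde{\rho}\setminus(\bmc+\ee_{[s]})=\rho^+$; since $\rho^+\leq\rho$ (only the top $p_0$ entries change, from $\infty$ to $d_1+1$), this lies in $O_\rho\subseteq O(I)$, so $\varnothing\in\Gamma^{\widetilde{\rho},\bmc}$. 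For each $k\in[s]$, a similar computation shows that $\lambda_k:=\widetilde{\rho}\setminus(\bmc+\ee_{[s]\setminus\{k\}})$ agrees with $\rho^+$ everywhere except at a single position where its value is $d_k+1$ instead of $d_k$: this position is $p_0+1$ when $k=1$, or sits inside the $k$-th block of $\rho^+$ (so at a position $>p_0$) when $k\geq 2$. At that position $\rho$ itself equals $d_k$, so $\lambda_k\not\leq\rho$; on the other hand, $\lambda_k\geq\rho^+$ pointwise, whence Lemma~\ref{lem:order}(ii) together with transitivity of $\leq$ yields $\lambda_k\not\leq\mu$ for every $\mu\in\Lambda^*(I)\setminus\{\rho\}$. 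Invoking (\ref{6-3}), this gives $\lambda_k\notin O(I)$, so $\{k\}\notin\Gamma^{\widetilde{\rho},\bmc}$; downward closure of simplicial complexes then forces $\Gamma^{\widetilde{\rho},\bmc}=\{\varnothing\}$.

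For the Betti number, I specialize Theorem~\ref{thm:main6} to $i=n-\ell(\rho)=p_1+\cdots+p_s$ and $\mu=\widetilde{\rho}$. With $|\bmc|=(p_1+\cdots+p_s)-s$, the homological index $s-i-1+|\bmc|$ collapses to $-1$, and $\widetilde{H}_{-1}(\{\varnothing\})\cong\kk$ gives a single copy of $\mathcal{S}^{((p_0+1,1^{p_1-1}),(1^{p_2}),\ldots,(1^{p_s}))}$. The hook-length formula gives $\dim S^{(p_0+1,1^{p_1-1})}=\binom{p_0+p_1-1}{p_0}$ and $\dim S^{(1^{p_k})}=1$ for $k\geq 2$, so this induced representation has total dimension $\binom{n}{p_0+p_1,p_2,\ldots,p_s}\cdot\binom{p_0+p_1-1}{p_0}$. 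Restricting to the single multidegree $\bm{\widetilde{\rho}}$ amounts to dividing by the size of the $\sym_n$-orbit of $\widetilde{\rho}$, namely $\binom{n}{p_0+p_1,p_2,\ldots,p_s}$; the resulting contribution to $\beta_{n-\ell(\rho),\widetilde{\rho}}(R/I)$ is therefore exactly $\binom{p_0+p_1-1}{p_0}$.

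The main obstacle is ruling out contributions from other $\bmc'\neq\bmc$ in $[\bm{0},\bm{p}(\widetilde{\rho})]$ at the same bidegree. The case $|\bmc'|<|\bmc|$ is immediate, as the homological index drops below $-1$ and the reduced homology vanishes automatically. The remaining cases force $c_1'>p_1-1$ (and thus $p_0\geq 1$). In those cases I plan to argue that $\{1\}$ is a cone point of $\Gamma^{\widetilde{\rho},\bmc'}$, which would make $\Gamma^{\widetilde{\rho},\bmc'}$ contractible and kill all reduced homology. The key computation is that passing from $\bmc'+\ee_{[s]\setminus F}$ to $\bmc'+\ee_{[s]\setminus(F\cup\{1\})}$ decreases the first coordinate by $1$, which increments one entry of the associated partition at a position $\leq p_0$ — exactly where $\rho$ has value $\infty$ and thus absorbs the increment. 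Verifying this cone property for faces $F$ realized via some $\mu\neq\rho$ is the most delicate step, and will require one further appeal to the maximality of $\rho$ in $\Lambda^*(I)$.
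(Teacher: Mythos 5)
Your first two paragraphs reproduce the paper's own argument essentially verbatim: the identity $\widetilde\rho\setminus(\bmc+\ee_{[s]})=\rho^+$ together with \eqref{6-3} gives $\varnothing\in\Gamma^{\widetilde\rho,\bmc}(I)$; the vertices are excluded exactly as in the paper, by writing $\widetilde\rho\setminus(\bmc+\ee_{[s]\setminus\{k\}})=\rho^++\eee_{j}$ with $j>p_0$ and invoking Lemma~\ref{lem:order}(ii); and the Betti number is extracted from Theorem~\ref{thm:main6} at homological index $-1$, with your orbit-size division being the same computation as the paper's ``restricting to the multidegree $\widetilde\rho$''. Up to this point your proposal is correct and matches the paper.

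The issue is your third paragraph, which you yourself leave as a plan rather than a proof: to get the stated \emph{equality} $\beta_{n-\ell(\rho),\widetilde\rho}(R/I)=\binom{p_0+p_1-1}{p_0}$ from Theorem~\ref{thm:main6} one must indeed show that $\widetilde H_{s-i-1+|\bmc'|}(\Gamma^{\widetilde\rho,\bmc'}(I))=0$ for every $\bmc'\neq\bmc$ with $c_1'\geq p_1$ (possible only when $p_0\geq 1$), and this is not done. Your reduction to $c_1'>p_1-1$ is right, and your cone computation works without further input only in the easy case where $\bmc'+\ee_{[s]\setminus F}$ has $k$-th coordinate equal to $p_k$ for all $k\geq 2$, since then $\lambda=\widetilde\rho\setminus(\bmc'+\ee_{[s]\setminus F})$ and its increment both lie below $\rho$ itself. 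In the remaining case $\lambda$ retains a $(d_k+1)$-entry in some block $k\geq 2$, so $\lambda\not\leq\rho$, the membership $\lambda\in O(I)$ is witnessed by some $\mu\neq\rho$, and one must argue (using maximality of $\rho$, e.g.\ by showing that a witness with $\ell(\mu)$ too small and $\mu_j=d_1$ would force $\rho\prec\mu$) that the incremented partition still lies in $O(I)$; a naive entrywise count gives $\widetilde\rho\leq\widetilde\mu$ but not immediately the size inequality in \eqref{eq:equiv-prec-on-LamI}, so this step genuinely requires work and is the gap in your proposal. It is worth noting that the paper's written proof is silent on exactly this point --- it passes directly from the single summand indexed by $\bmc$ to the equality $\Tor_{n-\ell(\rho)}(R/I,\kk)_{\langle\widetilde\rho\rangle}\cong\mc{S}^{((p_0+1,1^{p_1-1}),(1^{p_2}),\dots,(1^{p_s}))}$ --- and that the numerical statement is independently confirmed by the $\Ext$-module computation of Section~\ref{subsec:extr-Betti-Ext} (Proposition~\ref{prop:Ext-extremal} combined with (B) and (C)). So the value you are asked to prove is correct, but as it stands your argument only establishes that $\mc{S}^{((p_0+1,1^{p_1-1}),(1^{p_2}),\dots,(1^{p_s}))}$ occurs as a summand, i.e.\ the inequality $\beta_{n-\ell(\rho),\widetilde\rho}(R/I)\geq\binom{p_0+p_1-1}{p_0}$, together with $\Gamma^{\widetilde\rho,\bmc}(I)=\{\varnothing\}$; you should either complete the cone/vanishing argument for the remaining $\bmc'$ or appeal to the $\Ext$-theoretic identification of this Betti number.
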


\begin{proof}
We observe that $\widetilde{\rho}$ is as in (\ref{eq:def-tilde-rho}), and 
$$\widetilde \rho \setminus (\bm c + \ee_{[s]})=\big((d_1+1)^{p_0},d_1^{p_1},d_2^{p_2}\dots,d_s^{p_s}\big)=\rho^+.$$
Since $\rho^+ \leq \rho$, it follows from \eqref{6-3} that $\rho^+\in O(I)$, so $\varnothing \in \Gamma^{\widetilde \rho,\bm c}(I)$ by \eqref{eq:def-Gamma-mu-c}. To prove that $\Gamma^{\widetilde \rho,\bm c} = \{ \varnothing\}$, it then suffices to check that $\{i\} \not \in \Gamma^{\widetilde \rho,\bm c}(I)$ for $i \in [s]$.

We fix $i \in [s]$ and note that 
\[\widetilde \rho \setminus (\bm c + \ee_{[s]\setminus\{i\}})=\rho^+ +\eee_k\text{ for some }k>p_0,\]
hence $\widetilde \rho \setminus (\bm c + \ee_{[s]\setminus\{i\}}) \not \in O_\rho$. Since $\rho$ is maximal, we have by Lemma~\ref{lem:order}(ii) that $\rho^+ \not \in O_\mu$ for all $\mu \in \Lambda^*(I)$ with $\mu \ne \rho$. We get from \eqref{6-3} that $\widetilde \rho \setminus (\bm c + \ee_{[s]\setminus\{i\}}) \not \in O(I)$, hence $\{i\} \not \in \Gamma^{\widetilde \rho,\bm c}(I)$ by \eqref{eq:def-Gamma-mu-c}, as desired.

If we let $i=n-\ell(\rho)=n-p_0$ and $\mu=\widetilde{\rho}$ in Theorem~\ref{thm:main6}, then we have
\[s-i-1+|\bmc| = s-(n-p_0)-1+(n-p_0-s) = -1,\]
and
\[\dim_\kk \widetilde H_{s-i-1+|\bm c|}(\Gamma^{\mu,\bm c}(I)) = \dim_\kk \widetilde{H}_{-1}(\{\varnothing\}) = 1.\]
It follows from Theorem~\ref{thm:main6} that
\[\Tor_{n-\ell(\rho)}(R/I,\kk)_{\langle \widetilde{\rho}\rangle}=
\mc{S}^{((p_0+1,1^{p_1-1}),(1^{p_2}),\cdots,(1^{p_s}))} 
\]
Restricting to the multidegree $\widetilde{\rho}$, and using the fact that each of the Specht modules $S^{(1^{p_i})}$ has dimension $1$, it follows that
\[
\beta_{n-\ell(\rho),\widetilde{\rho}}(R/I) = \dim_{\kk} S^{(p_0+1,1^{p_1-1})} = {p_0+p_1-1\choose p_0},
\]
where the last equality follows from the Hook Length Formula \cite[\textsection~20]{James} (or a direct count of the standard tableaux in $\mathrm{Tab}((p_0+1,1^{p_1-1}))$).
\end{proof}

\begin{example}
\label{ex:6.7}
If we let $I=\langle (4,1,1),(5,2,0)\rangle_{\sym_3}$, then as seen in Remark~\ref{rem:decomposition}, we have
$\Lambda^*(I)=\{ (3,3,3),(4,4,0),(\infty,1,0)\}$.
The maximal dual generators of $I$ are then $(3,3,3)$ and $(4,4,0)$.
Since $\ell((3,3,3))=\ell((4,4,0))=0$, Lemma~\ref{lem:maximal} implies that they contribute to $\Tor_3(R/I,\kk)_{\langle (4,4,4)\rangle}$ and $\Tor_3(R/I,\kk)_{\langle (5,5,1) \rangle}$ respectively. More precisely, we have
\[ \beta_{3,(4,4,4)}(R/I) = \beta_{3,(5,5,1)}(R/I) = 1,\]
and
\[\Tor_3(R/I,\kk)_{\langle (4,4,4)\rangle} = \Tor_3(R/I,\kk)_{(4,4,4)}\]
 is $1$-dimensional, while
\[\Tor_3(R/I,\kk)_{\langle (5,5,1)\rangle} = \Tor_3(R/I,\kk)_{(5,5,1)} \oplus \Tor_3(R/I,\kk)_{(5,1,5)} \oplus \Tor_3(R/I,\kk)_{(1,5,5)}\]
is $3$-dimensional (see Example~\ref{ex:1.2}). We note that $(3,(4,4,4))$ and $(3,(5,5,1))$ are all the extremal pairs in the Betti table of $R/I$. 
\end{example}

\subsection{Extremal Betti numbers, regularity and projective dimension.}
\label{subsec:extr-Betti}

Our next goal is to give a proof of Theorem~\ref{thm:extremal}, and to derive formulas for the regularity and projective dimension of an $\sym_n$-invariant monomial ideal $I$. We begin by establishing some preliminary results.

We say that a simplicial complex $\Delta$ is a \defi{cone with apex $v$} if for every face $F \in \Delta$, one has $F \cup \{v\} \in \Delta$.
If $\Delta$ is a cone then it is contractible, and $\widetilde H_i(\Delta)=0$ for all $i$. We will need the following slight generalization of this fact.

\begin{lemma}
\label{lem:cone}
If $\Delta$ is a simplicial complex on the set $[n]$, with $\widetilde H_{i-1}(\Delta) \ne 0$, then there exists a facet $F$ of $\Delta$ with $ 1 \not \in F$ and $|F|\geq i$.
\end{lemma}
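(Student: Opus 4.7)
The plan is to argue by contradiction. I will assume that every facet $F$ of $\Delta$ with $1\notin F$ satisfies $|F|<i$, or equivalently, that every facet of $\Delta$ of size at least $i$ contains the vertex $1$, and derive $\widetilde H_{i-1}(\Delta)=0$. The key tool is the star
\[
\st_\Delta(1)=\{F\in\Delta : F\cup\{1\}\in\Delta\},
\]
which is a subcomplex of $\Delta$ and a cone with apex $1$ in the sense introduced just before the lemma, hence satisfies $\widetilde H_k(\st_\Delta(1))=0$ for all $k$.

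The central observation is that under the contradiction hypothesis, every face $G\in\Delta$ with $|G|\geq i$ already lies in $\st_\Delta(1)$. Indeed, choosing any facet $F$ of $\Delta$ containing $G$, we have $|F|\geq|G|\geq i$, so $1\in F$ by assumption, and hence $G\cup\{1\}\subseteq F\in\Delta$. Since $C_k(\Delta)$ is spanned by the $(k+1)$-element faces of $\Delta$, this immediately gives $C_k(\Delta)=C_k(\st_\Delta(1))$ for every $k\geq i-1$.

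From here, the plan is to show that the inclusion $\st_\Delta(1)\hookrightarrow\Delta$ induces an isomorphism on $(i-1)$-st reduced homology. The image of $\partial_i$ inside $C_{i-1}$ is identical for both complexes, because both $C_i$ and $C_{i-1}$ coincide. For the kernel of $\partial_{i-1}$, the boundary of any $z\in C_{i-1}(\Delta)=C_{i-1}(\st_\Delta(1))$ automatically lies in $C_{i-2}(\st_\Delta(1))$ (since $\st_\Delta(1)$ is itself a simplicial complex), and the natural inclusion $C_{i-2}(\st_\Delta(1))\hookrightarrow C_{i-2}(\Delta)$ is injective, so the conditions $\partial z=0$ in the two target spaces are equivalent. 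Combining with the contractibility of $\st_\Delta(1)$ yields $\widetilde H_{i-1}(\Delta)=\widetilde H_{i-1}(\st_\Delta(1))=0$, contradicting the hypothesis $\widetilde H_{i-1}(\Delta)\neq 0$.

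The only subtle point, and therefore the main obstacle, is the asymmetry in the chain groups in degree $i-2$: in general $C_{i-2}(\st_\Delta(1))$ may sit as a proper subspace of $C_{i-2}(\Delta)$, because a face of size $i-1$ need not satisfy $F\cup\{1\}\in\Delta$. This asymmetry is harmless for the cycle computation, precisely because boundaries of chains supported on the star remain supported on the star, but it is the one step that requires explicit verification. No Mayer--Vietoris sequence, long exact sequence, or discrete Morse theory argument is needed.
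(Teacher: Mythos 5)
Your proof is correct, and the one subtle point you flag (the possible strictness of the inclusion $C_{i-2}(\st_\Delta(1))\subseteq C_{i-2}(\Delta)$) is handled properly: since $\st_\Delta(1)$ is a subcomplex, boundaries of chains supported on it stay supported on it, and injectivity in degree $i-2$ gives equality of cycle spaces, so the identification of $\widetilde H_{i-1}$ goes through; the degenerate cases ($i=0$, or no faces of size $\geq i$ at all, or $\st_\Delta(1)=\varnothing$) cause no trouble. The paper argues the same basic point but packaged differently and directly rather than by contradiction: it introduces the subcomplex $\Delta'$ generated by the facets of $\Delta$ of dimension $\geq i-1$, observes that $\Delta'$ and $\Delta$ agree in dimensions $\geq i-1$ and hence have the same $\widetilde H_{i-1}$ (the same chain-level fact you verify explicitly), concludes that $\Delta'$ is not a cone with apex $1$, and then notes that a complex in which every facet contains $1$ is such a cone, so $\Delta'$ has a facet avoiding $1$, which is automatically a facet of $\Delta$ of size $\geq i$. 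So the two proofs share the key homological ingredient (insensitivity of $\widetilde H_{i-1}$ to faces of dimension $<i-1$, plus acyclicity of cones), but your auxiliary complex is the star of the vertex $1$, which is a cone by construction and absorbs all large faces under the contradiction hypothesis, whereas the paper's auxiliary complex is the span of the large facets, with the cone property playing the role of the obstruction rather than the tool. The paper's version is slightly shorter and avoids the contradiction framing; yours makes the chain-level verification fully explicit, which the paper leaves implicit.
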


\begin{proof}
Let $\Delta'$ be the simplicial complex whose facets are the facets of $\Delta$ of dimension $\geq(i-1)$. Since $\Delta'$ and $\Delta$ have the same faces of dimension $\geq(i-1)$, it follows that $\widetilde H_{i-1}(\Delta') \cong \widetilde H_{i-1}(\Delta) \ne 0$, and in particular $\Delta'$ is not a cone with apex $1$. We get that $\Delta'$ contains a facet $F$ with $1 \not \in F$, which is then a facet of $\Delta$ with $|F| \geq i$.
\end{proof}

We next record two more technical statements before proving Theorem~\ref{thm:extremal}.

\begin{lemma}\label{lem:claim}
 Let $\mu=(d_1^{p_1},\dots,d_s^{p_s},0^{p_{s+1}})$, and define $\bf p(\mu)$ as in \eqref{eq:def-p-mu}, and $\Gamma^{\mu,\bm c}(I)$ as in \eqref{eq:def-Gamma-mu-c}, for some $\bmc\leq\bf p(\mu)$. If $F$ is a facet of $\Gamma^{\mu,\bm c}(I)$ with $1\not\in F$, we let
 $$\mu'=\mu \setminus (\bmc + \ee_{[s]\setminus F})=(\mu_1',\dots,\mu_n'),$$
 and define
 $$h=\min\{i \mid \mu_i' \ne \mu_1\}.$$
 If $\rho =(\infty,\dots,\infty,\rho_l,\dots,\rho_n) \in \Lambda^*(I)$ with $\rho_l\neq\infty$ and $\mu' \leq \rho$, then we have $h \geq l$ and $\mu_1 \leq \rho_l+1$.
\end{lemma}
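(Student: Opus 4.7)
The plan is to exploit the facet property of $F$: since $1 \notin F$, maximality forces $F \cup \{1\} \notin \Gamma^{\mu,\bm c}(I)$, which by~\eqref{eq:def-Gamma-mu-c} means that $\nu := \mu \setminus (\bmc + \ee_{[s]\setminus(F \cup \{1\})}) \in P(I)$. The key preliminary observation is that $\nu = \mu' + \eee_h$: indeed, $\bmc + \ee_{[s]\setminus F}$ differs from $\bmc + \ee_{[s]\setminus(F\cup\{1\})}$ by a single $\ee_1$, which in the definition~\eqref{eq:mu-minus-c} corresponds to demoting one additional copy of $d_1$ to $d_1-1$; this demotion occurs precisely at position $h = p_1-c_1$ (the first index at which $\mu'$ drops below $\mu_1$). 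In particular, $\mu_h' = \mu_1 - 1$ and $\nu_h = \mu_1$.

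With this identification in hand, I would first establish $h \geq l$ by contradiction. Suppose instead that $h < l$; then $\rho_h = \infty$, so adding $\eee_h$ preserves the inequality $\mu' \leq \rho$, giving $\nu \leq \rho$ and hence $\nu \in O_\rho$. Since $O_\rho \subseteq O(I)$ by~\eqref{6-3}, this contradicts $\nu \in P(I)$, forcing $h \geq l$.

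The second conclusion $\mu_1 \leq \rho_l + 1$ then follows immediately: the bound $\mu' \leq \rho$ at position $h$ gives $\mu_1 - 1 = \mu_h' \leq \rho_h$, and since $\rho$ is a partition with $h \geq l$, we have $\rho_h \leq \rho_l$, whence $\mu_1 \leq \rho_l + 1$. The main obstacle in this argument is the initial identification $\nu = \mu' + \eee_h$: this requires unwinding the definition of $\mu \setminus (-)$ and verifying that the extra $d_1$-demotion lands at position $p_1 - c_1$ in every case, including the degenerate situation $c_1 = p_1 - 1$, where the leading $d_1$-block of $\mu'$ is empty and $h=1$. Once this combinatorial bookkeeping is settled, the rest reduces to a short application of the decomposition~\eqref{6-3} and the monotonicity of $\rho$ as a partition.
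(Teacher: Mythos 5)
Your argument is correct and follows essentially the same route as the paper's proof: you identify $\mu\setminus(\bmc+\ee_{[s]\setminus(F\cup\{1\})})=\mu'+\eee_h$ with $h=p_1-c_1$, use the facet property of $F$ together with the irredundant decomposition \eqref{6-3} to rule out $h<l$, and then read off $\mu_1\leq\rho_l+1$ from $\mu'\leq\rho$. The only (immaterial) difference is that in the last step you invoke monotonicity of $\rho$ ($\rho_h\leq\rho_l$) where the paper uses monotonicity of $\mu'$ ($\mu'_h\leq\mu'_l\leq\rho_l$).
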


\begin{proof}
 Since $1 \not \in F$, the first entry of $\bm c+ \ee_{[s] \setminus F}$ is positive. By \eqref{eq:mu-minus-c}, we have
$$\mu_h'+1 =\mu_h =\mu_1.$$
If $h < l$ then $\rho_h=\infty$, and using $\mu'\leq\rho$ we obtain
$$\mu \setminus (\bm c + \ee_{[s]\setminus (F \cup \{ 1\})})= \mu \setminus (\bm c + \ee_{[s]\setminus F}-\ee_1)=\mu'+\eee_h \leq \rho.$$
This shows that $\mu \setminus (\bm c+\ee_{[s]\setminus (F \cup \{1\})}) \in O_\rho \subset O(I)$, so $F\cup\{1\}\in\Gamma^{\mu,\bm c}(I)$ by \eqref{eq:def-Gamma-mu-c}, contradicting the fact that $F$ was a facet. It follows that $h\geq l$, and since $\mu' \leq \rho$, we have $\mu'_h \leq \mu_l' \leq \rho_l$. We get $\mu_1=\mu'_{h}+1 \leq \rho_l+1$, concluding the proof.
\end{proof}

For the next result we recall the definition of $s(\mu)$ from \eqref{eq:def-p-mu}.

\begin{lemma}
\label{lem:6.8}
If $\widetilde H_{i-1}(\Gamma^{\mu,\bmc}(I) ) \ne 0$  for some $\mu \in P_n$ and $\bmc \leq \bf p(\mu)$,
then there exists $\rho\in\Lambda^*_{\max}(I)$ such that
\begin{itemize}
\item[(i)] $|\bmc| +s(\mu)-i \leq n-\ell(\rho)$,
\item[(ii)] $|\mu|-(|\bmc| +s(\mu)-i) \leq |\rho^+|$, and
\item[(iii)] $\mu \leq \widetilde \rho$.
\end{itemize}
\end{lemma}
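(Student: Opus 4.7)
The plan is to construct $\rho$ by first selecting a dual generator $\nu\in\Lambda^*(I)$ that witnesses some facet of $\Gamma^{\mu,\bmc}(I)$ disjoint from $\{1\}$, and then enlarging $\nu$ to a $\preccurlyeq$-maximal element. To this end, I would apply Lemma~\ref{lem:cone} to $\Gamma^{\mu,\bmc}(I)$ (with vertex set $[s]$) to obtain a facet $F$ with $1\notin F$ and $|F|\geq i$. Setting $\mu'=\mu\setminus(\bmc+\ee_{[s]\setminus F})$, the definition \eqref{eq:def-Gamma-mu-c} gives $\mu'\in O(I)$, so by \eqref{6-3} there exists $\nu\in\Lambda^*(I)$ with $\mu'\leq\nu$. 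Since $\Lambda^*(I)$ is finite, we may choose $\rho\in\Lambda^*_{\max}(I)$ with $\nu\preccurlyeq\rho$.

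Two ingredients will drive the verification of (i)--(iii). From $\nu\preccurlyeq\rho$ we have $\widetilde{\nu}\leq\widetilde{\rho}$ and $|\nu^+|\leq|\rho^+|$ via \eqref{eq:equiv-prec-on-LamI}, as well as $\ell(\rho)\leq\ell(\nu)$ via Lemma~\ref{lem:order}(i). From Lemma~\ref{lem:claim} applied to $F$ and $\nu$, noting that the first coordinate of $\bmc+\ee_{[s]\setminus F}$ is $c_1+1$ since $1\notin F$, we obtain $\mu_1\leq\nu_{\ell(\nu)+1}+1$ together with the bound $\ell(\nu)\leq p_1-c_1-1$.

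For (iii) it suffices to show $\mu\leq\widetilde{\nu}$, which I would check coordinate-by-coordinate: for $i\leq\ell(\nu)$ we have $\widetilde{\nu}_i=\nu_{\ell(\nu)+1}+1\geq\mu_1\geq\mu_i$, and for $i>\ell(\nu)$ we have $\widetilde{\nu}_i=\nu_i+1$ together with $\mu_i\leq\mu'_i+1\leq\nu_i+1$, since each box removal in passing from $\mu$ to $\mu'$ decreases any given entry by at most one. The same coordinate split yields the stronger inequality $\mu'\leq\nu^+$, so $|\mu'|\leq|\nu^+|\leq|\rho^+|$; combined with $|\mu'|=|\mu|-|\bmc|-s+|F|\geq|\mu|-|\bmc|-s+i$ this proves~(ii). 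Finally, (i) reduces to $|\bmc|+s-i\leq n-\ell(\nu)$, which follows from $\ell(\nu)\leq p_1-c_1-1$ by a short arithmetic check using $c_k\leq p_k-1$ and $p_{s+1}\geq 0$. The main obstacle will be keeping track of the interplay between Lemma~\ref{lem:claim}, the partial order $\preccurlyeq$, and the size constraint $|F|\geq i$; once these are set up correctly, each of (i)--(iii) reduces to a coordinate-by-coordinate or arithmetic verification.
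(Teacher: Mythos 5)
Your proposal is correct and follows essentially the same route as the paper: use Lemma~\ref{lem:cone} to get a facet $F$ with $1\notin F$ and $|F|\geq i$, apply Lemma~\ref{lem:claim} to $\mu'=\mu\setminus(\bmc+\ee_{[s]\setminus F})$ and a dual generator above it, and verify (i)--(iii) by the same coordinate and size estimates. The only cosmetic difference is the order of the reduction (you pick $\nu\in\Lambda^*(I)$ first and then pass to a maximal $\rho\succcurlyeq\nu$, while the paper first observes via Lemma~\ref{lem:order}(i) and \eqref{eq:equiv-prec-on-LamI} that it suffices to find any $\rho\in\Lambda^*(I)$ satisfying the three conditions), which is logically the same step.
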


\begin{proof}
We note that each of $n-\ell(\rho)$, $|\rho^+|$, $\widetilde \rho$, increases as $\rho$ increases with respect to the order $\preccurlyeq$ by Lemma \ref{lem:order}(i). It follows that it is enough to find $\rho\in\Lambda^*(I)$ satisfying (i)--(iii).

By Lemma~\ref{lem:cone}, there is a facet $F$ of $\Gamma^{\mu,\bm c}(I)$ such that $1 \not \in F$ and $|F| \geq i$.
We let $\mu=(d_1^{p_1},\dots,d_s^{p_s},0^{p_{s+1}})$, so that $s=s(\mu)$, and define $\mu'$ and $h$ as in Lemma~\ref{lem:claim}. Since $F \in \Gamma^{\mu,\bm c}(I)$, we have $\mu' \in O(I)$, so there exists an element $\rho \in \Lambda^*(I)$ such that $\mu' \leq \rho$. We prove that $\rho$ satisfies conditions (i)--(iii).

We first note that $h=p_1-c_1$, and that $n=p_1+\cdots+p_s+p_{s+1}$. Combining these observations with the assumption $\bmc\leq\bf p(\mu)$, we get
$$n-|\bm c +\ee_{[s]}|\geq\sum_{j=1}^s(p_j-c_j-1)\geq p_1-c_1-1 =h-1.$$
Since $\mu'\leq\rho$, we know from Lemma~\ref{lem:claim} that $h\geq\ell(\rho)+1$, so
$$|\bm c| + s-i \leq |\bm c+\ee_{[s]}| \leq n-(h-1) \leq n-\ell(\rho),$$
proving (i). The conclusion of Lemma~\ref{lem:claim} implies that $\mu'\leq\rho^+$, hence
$$|\mu| -(|\bm c|+s-|F|)=|\mu'| \leq |\rho^+|,$$
which proves (ii). Finally, if we write $\rho =(\infty,\dots,\infty,\rho_l,\dots,\rho_n)$ with $\rho_l\neq\infty$, then Lemma~\ref{lem:claim} implies
\[ \mu_{l-1}\leq\cdots\leq\mu_1\leq\rho_l+1=\widetilde{\rho}_1=\cdots=\widetilde{\rho}_{l-1}.\]
Moreover, since $\mu'\leq\rho$, we get
\[ \mu_m\leq\mu'_m+1\leq\rho_m+1=\widetilde{\rho}_m\text{ for }l\leq m\leq n.\]
This shows that $\mu \leq \widetilde \rho$, proving (iii) and concluding our argument.
\end{proof}

We are now in the position to prove the main result of the section.

\begin{proof}[Proof of Theorem~\ref{thm:extremal}]
We know from Lemma~\ref{lem:maximal} that for each $\rho \in \Lambda^*_{\max}(I)$ we have
\[\Tor_{n-\ell(\rho)}(R/I,\kk)_{\langle \widetilde{\rho} \rangle}\neq 0.\]
To prove that every extremal pair for $R/I$ is of the form $(n-\ell(\rho),\widetilde{\rho})$, $\rho \in \Lambda^*_{\max}(I)$, it then suffices to check that for every pair $(j,\mu)$ with $\Tor_j(R/I,\kk)_{\langle \mu \rangle} \ne 0$, there exists $\rho \in \Lambda^*_{\max}(I)$ such that
\begin{equation}\label{eq:conds-jmu-smaller-rho}
\ \ j \leq n-\ell(\rho),\ \mu \leq \widetilde \rho \mbox{ and } |\mu|-j \leq |\widetilde \rho |-(n-\ell(\rho)).
\end{equation}

By Theorem~\ref{thm:main6}, $\Tor_j(R/I,\kk)_{\langle \mu \rangle} \ne 0$ implies that there is a $\bmc \leq \bf p(\mu)$ such that
$$\widetilde H_{s(\mu)-j-1+|\bmc|}(\Gamma^{\mu,\bm c}(I)) \ne 0.$$
We apply Lemma~\ref{lem:6.8} with $i=s(\mu)+|\bmc|-j$ to find $\rho \in \Lambda^*_{\max}(I)$ satisfying
\[j = |\bmc| +s(\mu)-i \leq n-\ell(\rho),\ |\mu|-j=|\mu|-(|\bmc| +s(\mu)-i) \leq |\rho^+|,\text{ and }\mu \leq \widetilde \rho.\]
Using the identity \eqref{eq:size-mu+-mutilde}, these conditions are precisely the ones from \eqref{eq:conds-jmu-smaller-rho}.

To conclude, we have to check that every pair $(n-\ell(\rho),\widetilde{\rho})$, with $\rho\in\Lambda^*_{\max}(I)$, is extremal. Equivalently, we have to show that if \eqref{eq:conds-jmu-smaller-rho} holds for $(j,\mu)=(n-\ell(\rho'),\widetilde{\rho}')$ with $\rho'\in\Lambda^*_{\max}(I)$, then $\rho=\rho'$. Indeed, in this case we can rewrite \eqref{eq:conds-jmu-smaller-rho} as
\[ l(\rho)\leq l(\rho'),\ \widetilde{\rho}'\leq\widetilde{\rho},\text{ and }l(\rho')-l(\rho)\leq |\widetilde{\rho}|-|\widetilde{\rho}'|,\]
which implies $\rho'\preccurlyeq\rho$. Since $\rho,\rho'\in\Lambda^*_{\max}(I)$, we must have $\rho'=\rho$, as desired.
\end{proof}

For a quick application of Theorem~\ref{thm:extremal}, recall that for a homogeneous ideal $I \subset R$,
the \defi{(Castelnuovo--Mumford) regularity} of $R/I$ is
$$\mathrm{reg}(R/I)= \max \{j-i\mid \Tor_i(R/I,\kk)_j \ne 0\}$$
and the \defi{projective dimension} of $R/I$ is
$$\mathrm{pd}(R/I)=\max\{ i \mid \Tor_i(R/I,\kk) \ne 0\}.$$
It follows that the extremal pairs in the Betti table of $R/I$ determine regularity and projective dimension, and we get the following.

\begin{corollary}\label{cor:reg} 
If $I \subset R$ is an $\sym_n$-invariant monomial ideal then
\begin{itemize}
\item[(i)] $\reg(R/I)=\max \{|\mu^+| \mid \mu \in \Lambda^*(I)\}$.
\item[(ii)] $\mathrm{pd}(R/I)=\max \{n-\ell(\mu) \mid \mu \in \Lambda^*(I)\}$.
\end{itemize}
\end{corollary}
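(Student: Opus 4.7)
The plan is to derive Corollary~\ref{cor:reg} as a fairly direct consequence of Theorem~\ref{thm:extremal}, using the order-theoretic properties of $\Lambda^*(I)$ established in Lemma~\ref{lem:order}. The first step I would carry out is the standard reduction of regularity and projective dimension to extremal pairs: for any pair $(i,\lambda)$ with $\beta_{i,\lambda}(R/I)\neq 0$, I would produce an extremal pair $(i',\lambda')$ with $i'\geq i$ and $|\lambda'|-i'\geq |\lambda|-i$ by maximizing $\mu$ over the finite set of pairs $(j,\mu)$ satisfying $\Tor_j(R/I,\kk)_{\langle\mu\rangle}\neq 0$, $j\geq i$, $\mu\geq\lambda$, and $|\mu|-j\geq|\lambda|-i$ --- any such maximizer is easily checked to satisfy conditions (a), (b) of extremality. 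This gives the standard identities
\[
\reg(R/I)=\max\{|\lambda|-i : (i,\lambda)\text{ extremal}\},\qquad
\pdim(R/I)=\max\{i : (i,\lambda)\text{ extremal}\}.
\]

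Next I would substitute the classification of extremal pairs from Theorem~\ref{thm:extremal}, which identifies them with the pairs $(n-\ell(\rho),\widetilde{\rho})$ for $\rho\in\Lambda^*_{\max}(I)$, and apply identity~\eqref{eq:size-mu+-mutilde} to rewrite $|\widetilde{\rho}|-(n-\ell(\rho))=|\rho^+|$. This immediately yields the formulas of Corollary~\ref{cor:reg} but with the max taken over $\Lambda^*_{\max}(I)$ rather than all of $\Lambda^*(I)$.

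The last step is to show that the two maxima coincide. Given any $\mu\in\Lambda^*(I)$, choose $\rho\in\Lambda^*_{\max}(I)$ with $\mu\preccurlyeq\rho$. For part~(i), the characterization~\eqref{eq:equiv-prec-on-LamI} of $\preccurlyeq$ gives $|\mu^+|\leq|\rho^+|$ directly. For part~(ii), Lemma~\ref{lem:order}(i) gives $\ell(\mu)\geq\ell(\rho)$ whenever $\mu\preccurlyeq\rho$ (strict if $\mu\neq\rho$), so $n-\ell(\mu)\leq n-\ell(\rho)$. In both cases the max over $\Lambda^*(I)$ equals the max over $\Lambda^*_{\max}(I)$, completing the argument.

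I do not expect any substantial obstacle: the hard work is encapsulated in Theorem~\ref{thm:extremal}, and everything else is bookkeeping with the identities relating $|\widetilde{\rho}|$, $|\rho^+|$, and $\ell(\rho)$, along with the definitions of $\preccurlyeq$ and $\Lambda^*_{\max}(I)$. If there is any mildly subtle point, it is the initial reduction of $\reg$ and $\pdim$ to extremal pairs --- but this is a routine consequence of the finiteness of the Betti table and the one-sided nature of conditions (a), (b) in the definition of extremality.
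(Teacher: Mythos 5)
Your proposal is correct and follows essentially the same route as the paper: combine Theorem~\ref{thm:extremal} with the identity \eqref{eq:size-mu+-mutilde}, and reduce the maximum over $\Lambda^*(I)$ to $\Lambda^*_{\max}(I)$ via Lemma~\ref{lem:order}(i) and \eqref{eq:equiv-prec-on-LamI}. The only difference is that you spell out the standard reduction of $\reg$ and $\pdim$ to extremal pairs, which the paper states without proof just before the corollary.
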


\begin{proof}
For $\mu, \rho \in \Lambda^*(I)$, the condition
$\mu\preccurlyeq\rho$ implies that $\ell(\mu) \geq \ell(\rho)$ by Lemma \ref{lem:order}, and it implies $|\mu^+|\leq|\rho^+|$ by \eqref{eq:equiv-prec-on-LamI}.
It follows that the right side of the equations (i) and (ii) only depends on the maximal dual generators.
The conclusion then follows by combining Theorem \ref{thm:extremal} with \eqref{eq:size-mu+-mutilde}.
\end{proof}

\subsection{Extremal Betti numbers via $\Ext$ modules}
\label{subsec:extr-Betti-Ext}
The goal of this section is to explain how the extremal pairs and the extremal Betti numbers for $R/I$ can be recovered from the structure of the modules $\Ext^{\bullet}(R/I,R)$.
Using results from \cite{Ra},
which determine the structure of $\Ext^i(I,R)$ for any $\sym_n$-invariant monomial ideal,
we then give an alternative proof of Theorem~\ref{thm:extremal}, and we show how Corollary~\ref{cor:reg} is equivalent to \cite[(1.3)]{Ra}.



In analogy with the notion of extremal pair from the beginning of Section~\ref{sec:prim-dec-extr-Betti}, we say that a pair $(i,\lambda) \in [n] \times P_n$ is \defi{$\Ext$-extremal} if
\begin{itemize}
\item[(a)] $\Ext^i(R/I,R)_{\langle -\lambda \rangle} \ne 0$, and
\item[(b)] $\Ext^j(R/I,R)_{\langle -\mu \rangle}=0$ for all $j \geq i$ and $\mu \gneq \lambda$ with $|\mu|-j \geq |\lambda|-i$.
\end{itemize}
We first show that the $\Ext$-extremal pairs coincide with the extremal pairs, and moreover that the extremal Betti numbers can be computed via $\Ext$ modules as follows.

\begin{proposition}\label{prop:Ext-extremal}
 We have that $(i,\lambda)\in [n] \times P_n$ is extremal if and only if it is $\Ext$-extremal. Moreover, for an extremal pair $(i,\lambda)$ we have
 \begin{equation}\label{eq:bilam-from-Ext}
 \beta_{i,\lambda} = \dim_{\kk} \Ext^i(R/I,R)_{-\bm \lambda}.
 \end{equation}
\end{proposition}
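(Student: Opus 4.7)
The plan is to compute $\Ext^i(R/I,R)_{-\bm\lambda}$ directly from a minimal multigraded free resolution $F_\bullet\to R/I$, and to show that extremality of $(i,\lambda)$ degenerates the relevant pieces of $\Hom(F_\bullet,R)$. Writing $F_k=\bigoplus_{\bmb}R(-\bmb)^{\beta_{k,\bmb}(R/I)}$, the degree-$(-\bm\lambda)$ piece of $\Hom(F_k,R)$ equals $\bigoplus_{\bmb\geq\bm\lambda}\kk^{\beta_{k,\bmb}(R/I)}$, since $R_{\bmb-\bm\lambda}=\kk$ if $\bmb\geq\bm\lambda$ and is zero otherwise. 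The key combinatorial observation is that any $\bmb\geq\bm\lambda$ with $\bmb\neq\bm\lambda$ satisfies $\mathrm{part}(\bmb)\gneq\lambda$ (componentwise order is preserved by sorting) and $|\bmb|>|\lambda|$, so condition (b) of extremality forces $\beta_{k,\bmb}=0$ for every $k\geq i$. Hence $\Hom(F_k,R)_{-\bm\lambda}\cong\kk^{\beta_{k,\bm\lambda}(R/I)}$ for $k=i,i+1$, arising entirely from the $R(-\bm\lambda)$-summand of $F_k$.

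Next I would invoke the minimality of $F_\bullet$ to show that both adjacent differentials vanish in degree $-\bm\lambda$. For the outgoing differential, $\partial$ applied to a generator of the $R(-\bm\lambda)$-summand of $F_{i+1}$ lands in the $\bm\lambda$-graded piece of $F_i$, which is spanned by elements $x^{\bm\lambda-\bmb'}e_{\bmb',l}$ with $\bmb'\leq\bm\lambda$; minimality forces the coefficient of each $e_{\bm\lambda,l}$ (the case $\bmb'=\bm\lambda$) to vanish, and the remaining terms have $\bmb'\not\geq\bm\lambda$, on which any degree-$(-\bm\lambda)$ functional vanishes. The analogous argument handles the incoming differential: $\partial$ of an $R(-\bm\lambda)$-generator in $F_i$ lies in $(F_{i-1})_{\bm\lambda}$ with no contribution from $R(-\bm\lambda)$-generators by minimality, hence has zero image under any functional of degree $-\bm\lambda$. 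Taking cohomology yields $\Ext^i(R/I,R)_{-\bm\lambda}\cong\kk^{\beta_{i,\bm\lambda}(R/I)}=\kk^{\beta_{i,\lambda}}$, proving the formula \eqref{eq:bilam-from-Ext}.

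With the formula established, I would deduce the equivalence of the two notions of extremality. \emph{Extremal implies $\Ext$-extremal:} condition (a) is immediate, and for (b), when $j\geq i$, $\mu\gneq\lambda$, and $|\mu|-j\geq|\lambda|-i$, every $\bmc\geq\bm\mu$ satisfies $\mathrm{part}(\bmc)\gneq\lambda$ and $|\bmc|-j\geq|\lambda|-i$, so extremality forces $\beta_{j,\bmc}=0$; this gives $\Hom(F_j,R)_{-\bm\mu}=0$, hence $\Ext^j(R/I,R)_{-\bm\mu}=0$, and the $\sym_n$-symmetry of $\Ext$ extends this to $\Ext^j(R/I,R)_{\langle-\mu\rangle}=0$. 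For the reverse direction I argue by contradiction: if $(i,\lambda)$ is $\Ext$-extremal but not extremal, then the set $\Sigma=\{(j,\mu):j\geq i,\,\mu\gneq\lambda,\,|\mu|-j\geq|\lambda|-i,\,\Tor_j(R/I,\kk)_{\langle\mu\rangle}\neq 0\}$ is non-empty (in the subcase where $\Tor_i(R/I,\kk)_{\langle\lambda\rangle}=0$, non-vanishing of $\Ext^i(R/I,R)_{-\bm\lambda}$ forces some $\bmb>\bm\lambda$ with $\beta_{i,\bmb}\neq 0$, supplying an element of $\Sigma$). Using finiteness of the Betti table, I iteratively ascend within $\Sigma$ to produce a Tor-extremal $(j^\ast,\mu^\ast)\in\Sigma$; the forward direction then gives $\Ext^{j^\ast}(R/I,R)_{\langle-\mu^\ast\rangle}\neq 0$, contradicting the $\Ext$-extremality of $(i,\lambda)$.

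The main obstacle is the first two steps: marshalling extremality and minimality in tandem to collapse $\Hom(F_i,R)_{-\bm\lambda}$ to $\kk^{\beta_{i,\bm\lambda}}$ and to kill both adjacent differentials. Once this local computation at multidegree $-\bm\lambda$ is in hand, the equivalence of the two extremality notions follows formally from the formula together with the ascent argument in $\Sigma$.
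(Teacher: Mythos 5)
Your argument is correct, and it reaches the conclusion by a somewhat different route than the paper. The paper's proof is built on a reusable lemma (Lemma~\ref{lem:deg-Ext-classes}) which, starting from any non-zero class in $\Ext^i(R/I,R)_{-\bm \lambda}$, produces a chain of minimal generators of $F_\bullet^{\vee}$ ascending in degree until it terminates in a cocycle; both implications and the formula \eqref{eq:bilam-from-Ext} are then extracted from that lemma together with minimality. You bypass this lemma entirely: for the formula you compute the whole degree-$(-\bm\lambda)$ strand of $F_\bullet^{\vee}$ directly, using extremality to discard every summand $R(-\bmb)$ with $\bmb\gneq\bm\lambda$ in homological degrees $i$ and $i+1$, and minimality to show both adjacent differentials vanish in that degree, so $\Ext^i(R/I,R)_{-\bm\lambda}\cong\kk^{\beta_{i,\bm\lambda}}$ in one step; your ``extremal $\Rightarrow$ $\Ext$-extremal'' direction is even slightly stronger than needed, since you get vanishing of the full module $\Hom(F_j,R)_{-\bm\mu}$, not just of its cohomology; and your reverse direction replaces the paper's maximal-$j$ contradiction by an ascent inside the set $\Sigma$ to a Tor-extremal pair, to which you then apply the already-proved forward implication (no circularity, since the formula and the forward direction are established first). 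One small caveat: your blanket claim that extremality forces $\beta_{k,\bmb}=0$ for \emph{every} $k\geq i$ and every $\bmb\gneq\bm\lambda$ is not literally true, because condition (b) only applies when $|\bmb|-|\lambda|\geq k-i$; however, you only invoke it for $k=i$ and $k=i+1$, where $|\bmb|\geq|\lambda|+1$ suffices, so the argument is unaffected — just state the restriction explicitly. Overall, your version trades the paper's structural lemma about representing $\Ext$ classes by minimal generators for a more elementary local computation, at the cost of having to run the finiteness/ascent argument separately in the reverse direction.
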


Proposition~\ref{prop:Ext-extremal} is a natural extension of the corresponding result in the standard-graded setting (see for instance \cite[Proposition~1.1]{BCP}). When $I$ is a square-free monomial ideal (not necessarily $\sym_n$-invariant), it follows from \cite[Theorem~3.3]{Mustata} or \cite[Theorem~2.6]{Yanagawa} that the multigraded components of $\Ext^i(R/I,R)$ can be computed as Betti numbers of the Alexander dual $I^{\vee}$, in which case the equality \eqref{eq:bilam-from-Ext} is equivalent to the one proved in \cite[Theorem 2.8]{BCP}. 

To prove Proposition~\ref{prop:Ext-extremal}, we first establish a preliminary result. We write $F_{\bullet}$ for the minimal free resolution of $R/I$, so that
\[ F_i = \bigoplus_{\bmc\in\bb{Z}^n} R(-\bmc)^{\beta_{i,\bmc}}.\]
We let $F_{\bullet}^{\vee} = \Hom_R(F_{\bullet},R)$ be the dual of $F_{\bullet}$, so that $\Ext^i(R/I,R)$ is the $i$-th cohomology module of $F_{\bullet}^{\vee}$. We write $\pd^i:F_i^{\vee}\lra F_{i+1}^{\vee}$ for the differentials in $F_{\bullet}^{\vee}$. 

\begin{lemma}\label{lem:deg-Ext-classes}
 If $\Ext^i(R/I,R)_{-\bm \lambda}\neq 0$ then there exist $k\geq 0$ and $\mu\geq\lambda$, such that $|\mu|-|\lambda|\geq k$ and $F_{i+k}^{\bm \vee}$ has a minimal generator $m$ of degree $-\mu$, with $\pd^{i+k}(m)=0$. In particular, we have $\Ext^{i+k}(R/I,R)_{-\bm \mu}\neq 0$, and if $(i,\lambda)$ is $\Ext$-extremal, then every non-zero class in $\Ext^i(R/I,R)_{-\bm \lambda}$ is represented by a minimal generator of~$F_i^{\vee}$.
\end{lemma}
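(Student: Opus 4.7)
The plan is to prove the first assertion via a recursive construction that starts at homological degree $i$ and moves upward, terminating at or before $\pdim(R/I)$, where the claim is automatic since $\pd^{\pdim(R/I)}=0$ makes every element of $F_{\pdim(R/I)}^\vee$ a cocycle.

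For the main step, I would pick a non-zero cocycle $z\in Z^i_{-\bm\lambda}$ and decompose it using the $\kk$-span $V_\bmc\subseteq F_i^\vee$ of minimal generators at degree $-\bmc$:
\[
z=\sum_{\bmc\geq\bm\lambda} x^{\bmc-\bm\lambda}\cdot v_\bmc,\qquad v_\bmc\in V_\bmc.
\]
By the minimality of the resolution, $\pd^i(v_\bmc)=\sum_{\bmd>\bmc}x^{\bmd-\bmc}\,\pi^{\bmc\to\bmd}(v_\bmc)$ for $\kk$-linear maps $\pi^{\bmc\to\bmd}\colon V_\bmc\to V'_\bmd$ into the minimal generator space $V'_{\bmd}\subseteq F_{i+1}^\vee$. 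Picking $\bmc_*$ coordinatewise minimal in $\{\bmc:v_\bmc\neq 0\}$ (with a lexicographic tiebreaker if the minimum is not unique) and evaluating the cocycle condition on each $V'_{\bmc_*+\ee_a}$ component isolates the contribution from $v_{\bmc_*}$, forcing $\pi^{\bmc_*\to\bmc_*+\ee_a}(v_{\bmc_*})=0$ for every $a$. This places $\pd^i(v_{\bmc_*})\in\mf{m}^2 F_{i+1}^\vee$. If $\pd^i(v_{\bmc_*})=0$, then $(k,\bm\mu,m)=(0,\bmc_*,v_{\bmc_*})$ solves the problem, using that $\bmc_*\geq\bm\lambda$ implies $|\bmc_*|-|\bm\lambda|\geq 0$. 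Otherwise, pick $\bmd_*>\bmc_*$ coordinatewise minimal in the support of $\pd^i(v_{\bmc_*})$; the previous vanishing forces $|\bmd_*|\geq|\bmc_*|+2$, and the element $u:=\pi^{\bmc_*\to\bmd_*}(v_{\bmc_*})\in V'_{\bmd_*}$ plays the role of $v_{\bmc_*}$ one level up: a direct computation using $\pd^{i+1}\pd^i=0$ shows $\pd^{i+1}(u)\in\mf{m}^2 F_{i+2}^\vee$. Iterating this construction, the homological degree increases by $1$ and the multidegree grows by $\geq 2$ at each step, so the process terminates in some $k\leq\pdim(R/I)-i$ steps with a minimal generator cocycle at a degree $-\bm\mu$ satisfying $\bm\mu\geq\bm\lambda$ and $|\bm\mu|-|\bm\lambda|\geq 2k\geq k$.

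The main obstacle is justifying the isolation step: when the coordinatewise minimum of the support is not unique, incomparable support elements can share a common cover $\bmc_*+\ee_a$ and contribute to the same cocycle relation, so one must either refine the minimality criterion on $\bmc_*$ by a lexicographic tiebreaker or first modify $z$ by a coboundary to simplify its support. The \emph{in particular} conclusion follows immediately because $m\notin \mf{m} F_{i+k}^\vee\supseteq B^{i+k}$ forces $[m]\neq 0$. For the moreover part, the $\Ext$-extremality of $(i,\lambda)$ makes $\Ext^{i+k}(R/I,R)_{-\bm\mu}=0$ whenever $\bm\mu>\bm\lambda$ and $|\bm\mu|-|\bm\lambda|\geq k$, so the iteration applied to any non-zero $[z]$ cannot continue past the very first step nor terminate at a $\bmc_*\neq\bm\lambda$, and must therefore produce a cocycle $v_{\bm\lambda}\in V_{\bm\lambda}$. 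Since the residual $z-v_{\bm\lambda}\in\mf{m} F_i^\vee\cap Z^i_{-\bm\lambda}$ has zero $V_{\bm\lambda}$-component, running the same argument on it shows that in the extremal case it represents the zero class, so $[z]=[v_{\bm\lambda}]$, yielding that every class in $\Ext^i(R/I,R)_{-\bm\lambda}$ admits a minimal-generator representative.
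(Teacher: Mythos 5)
Your argument stands or falls with the ``isolation step,'' and that step is not just unjustified but false as formulated. For a cocycle $z=\sum_{\bmc}x^{\bmc-\lambda}v_{\bmc}$, the cocycle condition in the free component of $F_{i+1}^{\vee}$ with generator degree $-\bmd_0$ reads $\sum_{\bmc<\bmd_0}\pi^{\bmc\to\bmd_0}(v_{\bmc})=0$, and when the support of $z$ has several pairwise incomparable minimal elements, different $\bmc$'s really do contribute to the same relation at $\bmd_0=\bmc_*+\ee_a$; a lexicographic tiebreaker does not remove them. Concretely, let $R=\kk[x_1,x_2]$ and $I=(x_1^2,x_1x_2,x_2^2)$, so that the dual differentials are $\pd^1(e_1^*)=x_2f_1^*$, $\pd^1(e_2^*)=-x_1f_1^*+x_2f_2^*$, $\pd^1(e_3^*)=-x_1f_2^*$. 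Then $z=x_1^2e_1^*+x_1x_2e_2^*+x_2^2e_3^*$ is a non-zero cocycle of multidegree $(0,0)$ whose support $\{(2,0),(1,1),(0,2)\}$ consists of pairwise incomparable elements, and for \emph{every} choice of $\bmc_*$ one has $\pd^1(v_{\bmc_*})\notin\mideal^2F_2^{\vee}$ (the cancellations in $\pd^1(z)=0$ occur only across distinct support elements). Here $z$ is a coboundary, so the lemma is not contradicted, but the example shows the isolation cannot follow from the cocycle condition plus a clever choice of $\bmc_*$ alone; any repair must use that the class of $z$ is non-zero or change the representative, and you supply no such argument. The same unproved isolation is then needed again at every later stage of your iteration (the assertion $\pd^{i+1}(u)\in\mideal^2F_{i+2}^{\vee}$), and the ``moreover'' part is routed through the same machinery, so the whole proof is incomplete.

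The difficulty is avoidable, because you do not need the element you track to be (close to) a cocycle at intermediate stages. The paper's proof picks one minimal generator $m_i$ occurring in the expression of $f_i$, sets $f_{i+1}:=\pd^i(m_i)$, and iterates on $f_{i+1}$ itself: minimality of the resolution gives $\operatorname{Im}(\pd^{i+j})\subseteq\mideal F_{i+j+1}^{\vee}$, so the generator degrees strictly increase (by at least $1$ per step, which is all the statement $|\mu|-|\lambda|\geq k$ requires, rather than your $2$), finiteness of $F_{\bullet}^{\vee}$ forces termination at a minimal generator $m$ with $\pd^{i+k}(m)=0$, and such an $m$ cannot be a boundary. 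This sidesteps the isolation problem entirely and also gives the extremal case directly: if a representing cocycle lay in $\mideal F_i^{\vee}$, the chain would start at some $\mu^0\gneq\lambda$ and contradict $\Ext$-extremality.
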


\begin{proof}
 By hypothesis, there exists $f_i\in\ker(\pd^i)$ with $\deg(f_i)=-\lambda$, representing a non-zero element of $\Ext^i(R/I,R)_{-\bm \lambda}$. For $j\geq 0$ and for as long as $0\neq f_{i+j}\in F_{i+j}^{\vee}$, we choose $m_{i+j}$ to be a minimal generator of $F_{i+j}^{\vee}$ of degree $-\mu^j\leq-\deg(f_{i+j})$, and set $f_{i+j+1}=\pd^{i+j}(m_{i+j})$. Since $F_{\bullet}^{\vee}$ is a finite complex, this process ends after finitely many steps with a minimal generator $m_{i+k}$ of $F_{i+k}^{\vee}$, satisfying $\pd^{i+k}(m_{i+k})=0$. We take $m=m_{i+k}$ and show that $\mu=\mu^k$ satisfies $\mu\geq\lambda$ and $|\mu|-|\lambda|\geq k$.
 
 We note that for each $j=0,\cdots,k-1$, $\deg(f_{i+j+1})=\deg(m_{i+j})=-\mu^j$, since the differential $\pd^{i+j}$ is degree-preserving. Moreover, since $\pd^{i+j}$ is minimal, it follows that $f_{i+j+1}$ is not a minimal generator of $F_{i+j+1}^{\vee}$, hence $-\mu^{j+1}<-\mu^j$ for $j=0,\cdots,k-1$. Starting with $\lambda=\deg(f_i)$, we get
 \[ \lambda\leq\mu^0<\mu^1<\cdots<\mu^k=\mu,\]
 which implies $\mu\geq\lambda$ and $|\mu|-|\lambda|\geq k$, as desired.
 
 Since $m$ is a minimal generator of $F_{i+k}^{\vee}$, $m$ is not a boundary, so it represents a non-zero element of $\Ext^{i+k}(R/I,R)_{-\bm \mu}$. If $(i,\lambda)$ is $\Ext$-extremal, this is only possible if $\mu=\lambda$ and $k=0$. If $f_i$ was not a minimal generator of $F_i^{\vee}$, then one can choose $m_i$ to be a minimal generator of $F_i^{\vee}$ of degree $-\mu^0<-\lambda$, and the construction above yields a pair $(i+k,\mu)$ with $\Ext^{i+k}(R/I,R)_{-\bm \mu}\neq 0$ and $|\mu|-|\lambda|\geq k$, contradicting the fact that $(i,\lambda)$ was $\Ext$-extremal, and concluding our proof.
\end{proof}

\begin{proof}[Proof of Proposition~\ref{prop:Ext-extremal}]
 Suppose first that $(i,\lambda)$ is an extremal pair, so $F_i$ has a minimal generator of degree $\lambda$, and there is no pair $(j,\mu)$ with $|\mu|-j\geq|\lambda|-i$ such that $F_j$ has a minimal generator of degree $\mu$. We show that every non-zero element $m\in F_i^{\vee}$ with $\deg(m)=-\lambda$ represents a non-zero class in $\Ext^i(R/I,R)_{-\bm \lambda}$. Indeed, we know that $m$ is not a boundary, since $\pd^{i-1}$ is minimal. Let $f=\pd^i(m)$, and suppose by contradiction that $f\neq 0$. Since $\deg(f)=\deg(m)=-\lambda$ and $\pd^i$ is minimal, there exists a minimal generator of $F_{i+1}^{\vee}$ of degree $-\mu<-\lambda$. This corresponds to a generator of $F_{i+1}$ of degree $\mu>\lambda$. This contradicts the fact that $(i,\lambda)$ was extremal, since it implies $|\mu|-(i+1)\geq|\lambda|-i$. It follows that $\pd^i(m)=0$, so $m$ represents a non-zero class in $\Ext^i(R/I,R)_{-\bm \lambda}$, as desired. By Lemma~\ref{lem:deg-Ext-classes}, every non-zero class in $\Ext^i(R/I,R)_{-\bm \lambda}$ arises in this way, so \eqref{eq:bilam-from-Ext} holds.

To show that $(i,\lambda)$ is $\Ext$-extremal, suppose by contradiction that there exists a pair $(j,\mu)$ with $\mu\gneq\lambda$, $|\mu|-j\geq |\lambda|-i$, and $\Ext^j(R/I,R)_{-\bm \mu}\neq 0$. Applying Lemma~\ref{lem:deg-Ext-classes} to $(j,\lambda)$, we can find $k\geq 0$ and $\delta\geq\mu$ with $|\delta|\geq|\mu|+k$, and such that $F_{j+k}^{\vee}$ has a minimal generator of degree $-\delta$. This shows that $F_{j+k}$ has a minimal generator of degree $\delta$, where $$|\delta|-(j+k)\geq|\mu|-j\geq |\lambda|-i,$$
contradicting the fact that $(i,\lambda)$ was extremal.

 Suppose now that $(i,\lambda)$ is $\Ext$-extremal. By Lemma~\ref{lem:deg-Ext-classes}, $F_i^{\vee}$ has a minimal generator of degree $-\lambda$, so $\Tor_i(R/I,\kk)_{\bm \lambda}\neq 0$. Suppose by contradiction that there exists a pair $(j,\mu)$ that satisfies $\mu\gneq\lambda$, $|\mu|-j\geq|\lambda|-i$, and $\Tor_j(R/I,\kk)_{\bm \mu}\neq 0$. We consider one such pair for which $j$ is maximal, and let $m$ denote a minimal generator of $F_j^{\vee}$ of degree $-\mu$. If $\pd^j(m)=0$ then $m$ represents a non-zero class in $\Ext^j(R/I,R)_{-\bm \mu}$ (since it is not a boundary), contradicting the fact that $(i,\lambda)$ was $\Ext$-extremal. If $\pd^j(m)=f\neq 0$ then there exists a minimal generator of $F_{j+1}^{\vee}$ of degree $-\delta<-\mu$. It follows that $\delta>\mu\gneq\lambda$,
 \[ |\delta|-(j+1) \geq |\mu|-j \geq |\lambda|-i,\]
 and $\Tor_{j+1}(R/I,\kk)_{\bm \delta}\neq 0$, so the pair $(j+1,\delta)$ contradicts the maximality of $(j,\mu)$, which concludes our proof.
\end{proof}

In order to apply Proposition~\ref{prop:Ext-extremal}, we recall some of the results and notation from \cite{Ra}. Let $\ul{z}=(z_1,\dots,z_n) \in P_n$ and $l\geq 0$, with $z_1=\cdots =z_{l+1}$.
We define the module $J_{\ul z,l}$ by (see \cite[(2.5)]{Ra})
\begin{equation}\label{eq:def-Jzl}
J_{\ul{z},l}= \langle \ul{z}\rangle_{\sym_n}
/ \langle \lambda \mid \lambda \geq z \mbox{ and } \lambda_i >z_i \mbox{ for some }i >l\rangle_{\sym_n}.
\end{equation}
\begin{itemize}
\item[(A)] In \cite[Corollary 2.13]{Ra} it was shown that $J_{\ul{z},l}$ is a Cohen--Macaulay $R$-module of dimension $l$ with an explicit description of $\Ext^{n-l}(J_{\ul{z},l},R)$.
\item[(B)] In \cite[Main theorem]{Ra} it was shown that for any $\sym_n$-invariant monomial ideal $I\subseteq R$, there is a finite set $\mc{Z}(I)\subset P_n\times\bb{Z}$ (with the notation in \cite[Definition~1.1]{Ra}, $\mc{Z}(I)=\mc{Z}(P(I))$) such that 
there exists a filtration of $R/I$ whose composition factors are the modules $J_{\ul{z},l}$ with $(\ul{z},l)\in\mc{Z}(I)$ and \[\Ext^i(R/I,R) \cong \bigoplus_{(\ul{z},l) \in \mc{Z}(I)} \Ext^i(J_{\ul{z},l},R).\]
\end{itemize}
We do not recall here the definition of $\mc{Z}(I)$, nor do we recall the description of $\Ext^{n-l}(J_{\ul{z},l},R)$, since they are somewhat technical. We only record the following property which follows from \cite[Corollary~2.13]{Ra}. If we define
\begin{equation}\label{eq:def-mu-z-l}
\mu(\ul{z},l) = (\infty^l,z_{l+1},\cdots,z_n) \in P_n^{\infty}
\end{equation}
then we have
\begin{itemize}
\item[(C)]
If $\ul{z}=(z_1,\dots,z_n)$ with $z_1=\cdots=z_p>z_{p+1}$ for some $p>l$, then
\[ \lambda = \ul{z}+(1^n) = (z_1+1,\cdots,z_n+1)=\widetilde{\mu(\ul{z},l)}\]
is the unique minimal element in the set
$\{\lambda \in P_n \mid \Ext^{n-l}(J_{\ul{z},l},R)_{-\bm \lambda} \ne 0\}$. Moreover $\dim_\kk \Ext^{n-l}(J_{\ul{z},l},R)_{-\bm \lambda}= {p-1 \choose l}$.
\end{itemize}

We explain a relation between the set $\mc{Z}(I)$ and dual generators.
Let
\begin{align*}
\Lambda(J_{\ul{z},l}) = \{\lambda\in P_n \mid (J_{\ul{z},l})_{\lambda}\neq 0\}  \overset{\eqref{eq:def-Jzl}}{=}\{\lambda\in P_n \mid \lambda \geq \ul{z},\ \lambda_i=z_i\text{ for }i\geq l+1\}.
\end{align*} 
Since $J_{\ul{z},l}$ are composition factors of $R/I$, we have a partition
$O(I)= \biguplus_{(\ul{z},l) \in \mc{Z}(I)} \Lambda(\ul{z},l).$
This allows us to write
\[ O(I) = \bigcup_{(\ul{z},l)\in\mc{Z}(I)} O_{\mu(\ul{z},l)},\]
but this representation of $O(I)$ is highly redundant. 

In analogy with $\Lambda^*(I)$, we define the set of \defi{dual pairs}
\begin{equation}\label{eq:def-dual-pairs}
\mc{Z}^*(I) = \{(\ul{z},l)\in\mc{Z}(I) \mid \mu(\ul{z},l)\not\leq\mu(\ul{y},u)\text{ for }(\ul{z},l)\neq(\ul{y},u)\in\mc{Z}(I)\}.
\end{equation}
We get an irredundant decomposition
\[ O(I) = \bigcup_{(\ul{z},l)\in\mc{Z}^*(I)} O_{\mu(\ul{z},l)},\]
and the formula \eqref{eq:def-mu-z-l} defines a bijection $\mc{Z}^*(I)\lra\Lambda^*(I)$.

\begin{example}\label{ex:Z-star-I}
 If $I=\langle(4,1,1),(5,2,0)\rangle_{\sym_3}$ as in Remark~\ref{rem:decomposition} then we have
{\small
\[ 
\begin{aligned}
\mc{Z}(I) =
\left\{ 
\begin{array}{l}
((0,0,0),1),((1,1,0),1),\\
((2,2,0),0),((3,2,0),0),((4,2,0),0),((3,3,0),0),((4,3,0),0),((4,4,0),0),\\
((1,1,1),0),((2,1,1),0),((3,1,1),0),((2,2,1),0),((3,2,1),0),((3,3,1),0),\\
((2,2,2),0),((3,2,2),0),((3,3,2),0),((3,3,3),0)
\end{array}
\right\}.
\end{aligned}
\]
}
The (significantly smaller) subset of dual pairs is 
\[ \mc{Z}^*(I) = \{((1,1,0),1),((4,4,0),0),((3,3,3),0)\},\]
corresponding to the dual generators of $I$
\[ \mu((1,1,0),1)=(\infty,1,0),\ \mu((4,4,0),0)=(4,4,0),\ \mu((3,3,3),0)=(3,3,3).\]
\end{example}

To give another perspective on $\mc{Z}^*(I)$, we introduce a partial order on $\mc{Z}(I)$ by
\[(\ul{z},l)\leq(\ul{y},u) \Longleftrightarrow l=u\text{ and }\ul{z}\leq\ul{y}.\]
We have that $(\ul{z},l),(\ul{y},u)$ are incomparable if $l\neq u$, and $(\ul{z},l)\leq(\ul{y},l)$ if and only if $\mu(\ul{z},l)\leq\mu(\ul{y},l)$. For the proof of the next result, we assume that the reader has some familiarity with \cite[Definition~1.1]{Ra} and its implications, such as \cite[Remark~2.3]{Ra} (in particular, we use some notation from \cite{Ra} which is not defined in this paper).

\begin{lemma}\label{lem:dual-pairs-maximal}
 We have that
 \[\mc{Z}^*(I) = \{\text{maximal elements of }\mc{Z}(I)\text{ with respect to }\leq\}.\]
\end{lemma}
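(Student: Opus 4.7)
The plan is to prove the two inclusions of the asserted set equality.

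For the inclusion $\mc{Z}^*(I)\subseteq\{\text{maximal elements of }\mc{Z}(I)\}$, suppose $(\ul{z},l)\in\mc{Z}^*(I)$ and $(\ul{z},l)\leq(\ul{y},u)$ in $\mc{Z}(I)$. By definition of the partial order $\leq$, we have $l=u$ and $\ul{z}\leq\ul{y}$, whence $\mu(\ul{z},l)\leq\mu(\ul{y},u)$ by the discussion preceding the lemma. The defining condition \eqref{eq:def-dual-pairs} of $\mc{Z}^*(I)$ then forces $(\ul{y},u)=(\ul{z},l)$, so $(\ul{z},l)$ is maximal.

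For the reverse inclusion, the first step is to show that $O_{\mu(\ul{y},u)}\subseteq O(I)$ for every $(\ul{y},u)\in\mc{Z}(I)$. Given $\lambda\in O_{\mu(\ul{y},u)}$ (so $\lambda_i\leq y_i$ for $i>u$), set $M=\max(\lambda_1,y_1)$ and consider $\lambda'=(M,\ldots,M,y_{u+1},\ldots,y_n)$ with $u$ copies of $M$; one verifies $\lambda'\in\Lambda(\ul{y},u)\subseteq O(I)$ and $\lambda\leq\lambda'$, so $\lambda\in O(I)$ by the downward closure of $O(I)$. Combined with the irredundant decomposition $O(I)=\bigcup_{\mu\in\Lambda^*(I)}O_\mu$ and the bijection $\mc{Z}^*(I)\leftrightarrow\Lambda^*(I)$, this yields the statement: for every $(\ul{y},u)\in\mc{Z}(I)$ there exists $(\ul{z},l)\in\mc{Z}^*(I)$ with $\mu(\ul{y},u)\leq\mu(\ul{z},l)$; comparing the positions of the $\infty$-entries in these tuples forces $l\geq u$.

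Now let $(\ul{y},u)\in\mc{Z}(I)$ be a maximal element, and apply the preceding step to obtain $(\ul{z},l)\in\mc{Z}^*(I)$ with $\mu(\ul{y},u)\leq\mu(\ul{z},l)$ and $l\geq u$. If $l=u$, then as in the easy direction $\ul{y}\leq\ul{z}$, so $(\ul{y},u)\leq(\ul{z},l)$ in $\mc{Z}(I)$, and maximality gives $(\ul{y},u)=(\ul{z},l)\in\mc{Z}^*(I)$, as desired.

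The main obstacle is the case $l>u$, where I expect to derive a contradiction by exhibiting a pair $(\ul{y}',u)\in\mc{Z}(I)$ with $\ul{y}<\ul{y}'$. The natural candidate is $\ul{y}'=(y_1+1,\ldots,y_1+1,y_{u+2},\ldots,y_n)$ with $u+1$ leading copies of $y_1+1$; using $l\geq u+1$ together with $y_i\leq z_i$ for $i>l$, a direct check shows $\ul{y}'\leq\mu(\ul{z},l)$, so $\ul{y}'\in O_{\mu(\ul{z},l)}\subseteq O(I)$. Since $O(I)=\biguplus_{(\ul{y}'',u'')\in\mc{Z}(I)}\Lambda(\ul{y}'',u'')$ is a disjoint union, $\ul{y}'$ lies in exactly one $\Lambda(\ul{y}'',u'')$, and closing the argument reduces to identifying $(\ul{y}'',u'')$ as $(\ul{y}',u)$. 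This is the delicate point: it relies on the combinatorial rules defining $\mc{Z}(I)$ in \cite[Definition~1.1]{Ra} to show that the shape of $\ul{y}'$ (precisely $u+1$ equal leading entries) forces $u''=u$, thereby contradicting maximality and completing the proof.
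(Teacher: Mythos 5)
Your forward inclusion is correct and coincides with the paper's. The preparatory steps of the reverse inclusion are also sound: the containment $O_{\mu(\ul{y},u)}\subseteq O(I)$ holds (and is in fact already recorded in the paper just before the lemma), and from the irredundant decomposition $O(I)=\bigcup_{\mu\in\Lambda^*(I)}O_{\mu}$ one does get a single $(\ul{z},l)\in\mc{Z}^*(I)$ with $\mu(\ul{y},u)\leq\mu(\ul{z},l)$ — though note this needs a small extra argument (let the first $u$ entries grow and pigeonhole over the finitely many elements of $\Lambda^*(I)$), which you assert rather than prove; that part is easily repaired. The case $l=u$ is fine, using $y_1=\cdots=y_{u+1}$ and $z_1=\cdots=z_{l+1}$.

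The genuine gap is the case $l>u$, which is exactly where the content of the lemma lies. You construct $\ul{y}'=((y_1+1)^{u+1},y_{u+2},\dots,y_n)\in O(I)$ and then need the unique block $\Lambda(\ul{y}'',u'')$ of the partition of $O(I)$ containing $\ul{y}'$ to satisfy $u''=u$; you explicitly leave this unproved, saying it should follow from the rules in \cite[Definition~1.1]{Ra}. But nothing available in the present paper — the disjoint decomposition $O(I)=\biguplus\Lambda(\ul{z},l)$, the constraint $z_1=\cdots=z_{l+1}$, or the bijection $\mc{Z}^*(I)\leftrightarrow\Lambda^*(I)$ — determines which value of $u''$ the block containing $\ul{y}'$ carries, and it is not evident that $u''$ equals $u$ rather than some other value; settling this requires working with the combinatorial characterization of $\mc{Z}(I)$ in terms of the generating partitions $\ul{x}\in\mc{X}(I)$, their truncations $\ul{x}(c)$, and the conjugate values $x'_{c+1}$. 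That is precisely what the paper's proof does: assuming a maximal pair lies outside $\mc{Z}^*(I)$, it invokes the existence of $\ul{x}\in\mc{X}(I)$ with $\ul{x}(c)\leq\ul{z}$ and $x'_{c+1}\leq l+1$, and runs a two-case analysis ($c\leq d$ versus $c>d$) to reach a contradiction. So your proposal defers the crux to an unverified claim, and as written the proof is incomplete.
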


\begin{proof}
 For the inclusion ``$\subseteq$", let $(\ul{z},l)\in\mc{Z}^*(I)$, and suppose by contradiction that there exists $(\ul{y},l)\in\mc{Z}(I)$ with $(\ul{y},l)>(\ul{z},l)$. This implies that $\mu(\ul{y},l)>\mu(\ul{z},l)$, contradicting \eqref{eq:def-dual-pairs}. 
 
 For the reverse inclusion ``$\supseteq$", let $(\ul{z},l)\in\mc{Z}(I)$ be maximal with respect to $\leq$, and suppose by contradiction that $(\ul{z},l)\not\in\mc{Z}^*(I)$. By \eqref{eq:def-dual-pairs}, there exists $(\ul{y},u)\in\mc{Z}(I)$ with $\mu(\ul{z},l)<\mu(\ul{y},u)$, which implies $l\leq u$. Moreover, if $l=u$ then $\ul{z}<\ul{y}$, contradicting the maximality of $(\ul{z},l)$. We thus have
 \begin{equation}\label{eq:conds-zl-yu}
 l<u\text{ and }z_i\leq y_i\text{ for }i\geq u+1.
 \end{equation}
 
 We write $c=z_1$ and $d=y_1$, and note that there exists $\ul{x}\in\mc{X}(I)$ such that $\ul{x}(c)\leq\ul{z}$ and $x'_{c+1}\leq l+1$. In particular, we must have $x_i\leq c$ for all $i>l+1$ (hence for $i\geq u+1$). Suppose first that $c\leq d$. The condition $\ul{x}(c)\leq\ul{z}$ implies that $x_i=\min(x_i,c)\leq z_i$ for $i\geq u+1$, which combined with \eqref{eq:conds-zl-yu} implies that $x_i\leq y_i$ for $i\geq u+1$. Since $y_i=d$ for $i\leq u+1$, this shows that $\ul{x}(d)\leq\ul{y}$. Since $(\ul{y},u)\in\mc{Z}(I)$, this forces $x'_{d+1}\geq u+1$, which contradicts \eqref{eq:conds-zl-yu} since it implies
 \[ u+1\leq x'_{d+1}\leq x'_{c+1}\leq l+1.\]
 Suppose now that $c>d$. We have that $z_i\leq y_i\leq d<c$ for $i\geq u+1$, which combined with $\ul{x}(c)\leq\ul{z}$ implies that $x_i\leq z_i$ for $i\geq u+1$. Using \eqref{eq:conds-zl-yu}, this shows that $x_i\leq y_i$ for $i\geq u+1$, hence $\ul{x}(d)\leq\ul{y}$. Since $(\ul{y},u)\in\mc{Z}(I)$, we must have $x'_{d+1}\geq u+1$, hence
 \[ x_{u+1}\geq d+1>y_{u+1}\geq z_{u+1}.\]
 Since $\ul{x}(c)\leq\ul{z}$, this forces $z_{u+1}=c$. The above inequality implies $d+1>z_{u+1}=c$, contradicting the fact that $d<c$ and concluding the proof.
\end{proof}

We can now explain how Corollary~\ref{cor:reg} is equivalent to the formulas \cite[(1.3)]{Ra}, which assert that
\[\reg(R/I) = \max\{|\ul{z}|+l \mid (\ul{z},l)\in\mc{Z}(I)\},\ 
\pdim(R/I) = \max\{n-l \mid (\ul{z},l)\in\mc{Z}(I)\}.\]
Indeed, it follows from Lemma~\ref{lem:dual-pairs-maximal} that for every $(\ul{z},l)\in\mc{Z}(I)$ there exists $(\ul{y},l)\in\mc{Z}(I)$ with $\ul{y}\geq\ul{z}$ (and hence $|\ul{y}|\geq|\ul{z}|$), which then implies that
\[\reg(R/I) = \max\{|\ul{z}|+l \mid (\ul{z},l)\in\mc{Z}^*(I)\},\ 
\pdim(R/I) = \max\{n-l \mid (\ul{z},l)\in\mc{Z}^*(I)\}.\]
Using the fact that if $\mu=\mu(\ul{z},l)$ then $|\mu^+|=|\ul{z}|+l$ and $\ell(\mu)=l$, this shows that Corollary~\ref{cor:reg} is equivalent to the above formulas. 

We conclude this section with an alternative proof of Theorem~\ref{thm:extremal}. To that end, we consider the ordering on $\mc{Z}^*(I)$ induced by the bijection with $\Lambda^*(I)$. We have 
\begin{equation}\label{eq:def-prec-on-ZI}
(\ul{z},l)\preccurlyeq(\ul{y},u)\Longleftrightarrow \ul{z}\leq\ul{y}\text{ and }|\ul{z}|+l\leq |\ul{y}|+u.
\end{equation}

\begin{proof}[Alternative proof of Theorem~\ref{thm:extremal}]
It follows from (B), (C) and  \eqref{eq:def-prec-on-ZI} that $(i,\lambda)$ is extremal if and only if there exists a maximal $(\ul{z},l)\in\mc{Z}^*(I)$ with respect to $\preccurlyeq$ such that $i=n-l$ and $\lambda=\widetilde{\mu(\ul{z},l)}$. Since this is equivalent to the fact that $\mu(\ul{z},l)\in\Lambda^*_{\max}(I)$, and since $\ell(\mu(\ul{z},l))=l$, this recovers the desired description of the extremal pairs.

Suppose that $(i,\lambda)$ is an extremal pair with
$(i,\lambda)=(n-l,\widetilde{\mu(\ul{z},l)})$.
If $p$ is the unique interger satisfying $z_1=\cdots=z_p>z_{p+1}$, then we have using Proposition~\ref{prop:Ext-extremal} and (C) that
\begin{equation}\label{eq:Ext-ilam-binomial}
\beta_{i,\lambda}=\dim_{\kk} \Ext^i(J_{\ul{z},l},R)_{-\bm \lambda} = {p-1\choose l}.
\end{equation}
If $\mu(\ul{z},l)=(\infty^{p_0},d_1^{p_1},\cdots,d_s^{p_s})$ with $\infty>d_1>\cdots>d_s\geq 0$ (as in Lemma~\ref{lem:maximal}), then we have $l=p_0$ and $p=p_0+p_1$. This means that \eqref{eq:Ext-ilam-binomial} agrees with the formula for the extremal Betti numbers from Lemma~\ref{lem:maximal}, concluding our proof.
\end{proof}

\section{Varying the number of variables}
\label{sec:vary-vars}
\ytableausetup{boxsize=0.3em}

In this section, we study how the multigraded Betti numbers of the ideals $I_m$ (defined in \eqref{eq:def-Im}) vary with $m$, when $f_1,\cdots,f_r$ are assumed to be monomials. This extends a result of the first author from \cite{Mu}, that gives a simple recipe to determine for all $m\geq n$ all the multidegrees $\mu \in P_m$ for which $\Tor_i(I_m,\kk)_{\langle \mu \rangle}$ is non-zero. The recipe requires knowing the set 
$$\{(i,\lambda) \in \{0,1,\dots,n-1\} \times P_n \mid \Tor_i(I_n,\kk)_{\langle \lambda \rangle} \ne 0\},$$
and is summarized in Theorem~\ref{thm:murai} below. The goal of this section is to explain how using Theorem~\ref{thm:main} we can determine not only which of the multigraded Betti numbers are non-zero, but to also compute them explicitly, and to describe the $\sym_m$-module structure of $\Tor_i(I_m,\kk)_{\langle \mu \rangle}$ for all $m \geq n$ and all $\mu\in P_m$. This is explained in Theorem~\ref{thm:varying}, but before going into details we make some preliminary conventions.

Throughout this section, we identify $(a_1,\dots,a_n)\in P_n$ and $(a_1,\dots,a_n,0^{m-n})\in P_m$ for $m\geq n$. By this identification, if $\lambda^1,\dots,\lambda^r \in P_n$, we can regard them as elements of $P_m$ with $m \geq n$, and consider the ideals
$$I_m=\langle \lambda^1,\dots,\lambda^r \rangle_{\sym_m} \subset \kk[x_1,\dots,x_m].$$
The (non-)vanishing of the multigraded Betti numbers of $I_m$ is characterized by the following theorem of the first author (see \cite[Theorem 3.2]{Mu}).

\begin{theorem}
\label{thm:murai}
Let $\lambda^1,\dots,\lambda^r \in P_n$ and let
$I_m=\langle \lambda^1,\dots,\lambda^r \rangle_{\sym_m}$ for $m \in \{n,n+1\}$.
For any $ 0 \leq i \leq n$ and $\mu=(\mu_1,\dots,\mu_n,\mu_{n+1}) \in P_{n+1}$, one has
\begin{itemize}
\item[(i)] if $\mu_{n+1}=0$ then $\Tor_i(I_{n+1},\kk)_\mu \ne 0 $ if and only if $\Tor_i(I_{n},\kk)_{(\mu_1,\dots,\mu_n)}\neq 0$,
\item[(ii)] if $0 < \mu_{n+1} <\mu_n$ then $\Tor_i(I_{n+1},\kk)_\mu =0$,
\item[(iii)] if $\mu_{n+1}=\mu_n$, then
$\Tor_i(I_{n+1},\kk)_\mu \ne 0$ if and only if $\Tor_{i-1}(I_n,\kk)_{(\mu_1,\dots,\mu_n)} \ne 0$.
\end{itemize}
\end{theorem}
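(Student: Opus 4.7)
The approach is to apply Theorem~\ref{thm:main} and compare the simplicial complexes $\Delta^{\mu,\bm c}(I_{n+1})$ with their analogues $\Delta^{\mu',\bm c'}(I_n)$ for the truncation $\mu'=(\mu_1,\ldots,\mu_n)\in P_n$. The essential observation is that every generator $\lambda^j\in P_n$, viewed in $P_{n+1}$ by appending a zero, has $(\lambda^j)_{n+1}=0$; consequently, for any partition $\nu\in P_{n+1}$, membership $\nu\in P(I_{n+1})$ depends only on the first $n$ sorted entries of $\nu$, since the comparison at position $n+1$ is automatic.

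For Case (i), $\mu$ and $\mu'$ have identical distinct nonzero values with the same multiplicities $p_1,\ldots,p_s$, so $\bm p(\mu)=\bm p(\mu')$; inspecting \eqref{eq:def-Del-mu-c-I} one sees that $\Delta^{\mu,\bm c}(I_{n+1})$ and $\Delta^{\mu',\bm c}(I_n)$ coincide as simplicial complexes, and the result follows immediately from Theorem~\ref{thm:main}. For Case (ii), $\mu$ acquires a new distinct smallest value $d_{s+1}=\mu_{n+1}$ of multiplicity $1$, so the complex $\Delta^{\mu,\bm c}(I_{n+1})$ has vertex set $[s+1]$; since $\bm p(\mu)_{s+1}=0$ every admissible $\bm c$ has $c_{s+1}=0$. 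Adding $s+1$ to any face $F$ simply replaces the $(n+1)$-th entry of $\mu\setminus(\bm c+\ee_F)$ by $\mu_{n+1}-1\geq 0$ while leaving the first $n$ entries unchanged; since the condition at position $n+1$ is automatic, $\Delta^{\mu,\bm c}(I_{n+1})$ is a cone with apex $s+1$, and therefore contributes nothing to $\Tor$.

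Case (iii) is the core of the argument. Writing $\mu=(d_1^{p_1},\ldots,d_s^{p_s+1})$ with $d_s=\mu_n>0$ and $\mu'=(d_1^{p_1},\ldots,d_s^{p_s})$, one has $\bm p(\mu)=(p_1-1,\ldots,p_{s-1}-1,p_s)$ against $\bm p(\mu')=(p_1-1,\ldots,p_s-1)$, so the coordinate $c_s$ gains exactly one extra admissible value. A careful analysis of the first $n$ sorted entries of $\mu\setminus(\bm c+\ee_F)$, broken into sub-cases according to whether $c_s+\delta_{s\in F}$ equals zero or is positive, yields two conclusions: when $c_s=0$, the complex $\Delta^{\mu,\bm c}(I_{n+1})$ is the cone with apex $s$ over $\Delta^{\mu',\bm c}(I_n)$, hence acyclic; and when $c_s\geq 1$, one has the literal identification $\Delta^{\mu,\bm c}(I_{n+1})=\Delta^{\mu',\bm c-\ee_s}(I_n)$ of simplicial complexes on $[s]$. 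Only the second regime contributes to $\Tor$, and reindexing via $\bm c'=\bm c-\ee_s$ shifts $|\bm c|$ by one, which translates Theorem~\ref{thm:main} into the desired equivalence between non-vanishing of $\Tor_i(I_{n+1},\kk)_{\langle\mu\rangle}$ and of $\Tor_{i-1}(I_n,\kk)_{\langle\mu'\rangle}$.

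The main obstacle will be the combinatorial bookkeeping in Case (iii): one needs to match the blocks of equal entries in $\mu\setminus\bm b$ with those in $\mu'\setminus\bm b'$ across four sub-cases (determined by $c_s=0$ versus $c_s\geq 1$, and by $s\in F$ versus $s\notin F$), and verify that the four local identifications assemble consistently into the claimed global equalities of complexes. The computation is elementary but requires careful tracking of how the entry at position $n+1$ of $\mu\setminus\bm b$ moves as $\bm b$ varies.
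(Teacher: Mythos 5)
Your proposal is correct and follows essentially the same route as the paper: the paper recovers Theorem~\ref{thm:murai} from Theorem~\ref{thm:main} together with Theorem~\ref{thm:varying}, whose proof is exactly your argument — the observation \eqref{7-1} that membership in $P(I_{n+1})$ depends only on the first $n$ sorted entries, the identification $\Delta^{\mu,\bm c}(I_{n+1})=\Delta^{\hat\mu,\bm c}(I_n)$ when $\mu_{n+1}=0$, the cone-with-apex argument when $c_s=0$ (covering case (ii) and half of case (iii)), and the identification $\Delta^{\mu,\bm c}(I_{n+1})=\Delta^{\hat\mu,\bm c-\ee_s}(I_n)$ for $c_s\geq 1$ with the index shift $|\bm c|\mapsto|\bm c|-1$ giving the $\Tor_i\leftrightarrow\Tor_{i-1}$ correspondence. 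The only (harmless) imprecision is describing the $c_s=0$ complex in case (iii) as ``the cone over $\Delta^{\mu',\bm c}(I_n)$''; what matters, and what your argument actually establishes, is simply that it is a cone and hence acyclic.
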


To analyze the dimensions of the non-vanishing $\Tor$ groups in Theorem~\ref{thm:murai}, recall that for an $\sym_n$-invariant monomial ideal $I \subset \kk[x_1,\dots,x_n]$,
the numbers $\gamma_i^{\mu,\bm c}(I)$ determine all the (multigraded) Betti numbers of $I$. In order to determine the Betti numbers for $I_m$ when $m\geq n$, it is then enough to understand how each $\gamma_i^{\mu,\bm c}(I_m)$ changes when $m$ increases. This is explained by the following simple rule.

\begin{theorem}
\label{thm:varying}
Let $\lambda^1,\dots,\lambda^r \in P_n$ and $I_m=\langle \lambda^1,\dots,\lambda^r \rangle_{\sym_m}$ for $m \in\{n,n+1\}$.
Let $\mu=(\mu_1,\dots,\mu_n,\mu_{n+1}) \in P_{n+1}$, $s=s(\mu)$ and $\bm c \leq \bf p(\mu)$. For every $ 0\leq i \leq n$, we have
\begin{itemize}
\item[(i)] if $\mu_{n+1}=0$, then $\gamma_i^{\mu,\bm c}(I_{n+1})=\gamma_i^{(\mu_1,\dots,\mu_n),\bm c}(I_n)$,
\item[(ii)] if $\mu_{n+1}>0$, then
$$\gamma_i^{\mu,\bm c}(I_{n+1})=
\begin{cases}
0& \mbox{ if }c_s=0;\\
\gamma_i^{(\mu_1,\dots,\mu_n),\bm c-\ee_s}& \mbox{ if } c_s>0.
\end{cases}
$$
\end{itemize}
\end{theorem}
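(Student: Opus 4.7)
The plan is to argue directly at the level of the simplicial complexes $\Delta^{\mu,\bm c}(I)$, using the fact that $P(I_{n+1})$ is obtained from $P(I_n)$ by appending a zero coordinate. More precisely, since every generator $\lambda^k\in P_n$ has vanishing $(n+1)$-st coordinate, the definition of $\langle\cdot\rangle_{\sym_{n+1}}$ gives the key identity
\[
\lambda\in P(I_{n+1}) \iff (\lambda_1,\dots,\lambda_n)\in P(I_n)\text{ (after rearrangement)},
\]
valid for any $\lambda=(\lambda_1,\dots,\lambda_{n+1})\in P_{n+1}$. The strategy in each case will be to pick an $F\subseteq[s]$ and compare $(\mu\setminus(\bm c+\ee_F))_{[n]}$, i.e.\ the first $n$ parts of $\mu\setminus(\bm c+\ee_F)$, with a suitable expression $\mu'\setminus(\bm c'+\ee_{F'})$, where $\mu'=(\mu_1,\dots,\mu_n)\in P_n$.

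For case (i), $\mu_{n+1}=0$ so $\mu=(d_1^{p_1},\dots,d_s^{p_s},0^{p_{s+1}})$ with $p_{s+1}\geq 1$, hence $\mu'=(d_1^{p_1},\dots,d_s^{p_s},0^{p_{s+1}-1})$ has $s(\mu')=s$ and $\bm p(\mu')=\bm p(\mu)$. The last coordinate of $\mu\setminus(\bm c+\ee_F)$ is always $0$ (the last $p_{s+1}$ entries never get modified), so stripping it gives $\mu'\setminus(\bm c+\ee_F)$ on the nose. The key identity then yields $\Delta^{\mu,\bm c}(I_{n+1})=\Delta^{\mu',\bm c}(I_n)$, and the $\gamma$-numbers agree.

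For case (ii), $\mu=(d_1^{p_1},\dots,d_s^{p_s})$ with $d_s>0$. When $c_s>0$ (which forces $p_s\geq 2$, since $\bm c\leq\bm p(\mu)$), so $\mu_n=\mu_{n+1}=d_s$ and $\mu'=(d_1^{p_1},\dots,d_s^{p_s-1})$ with $s(\mu')=s$ and $\bm p(\mu')_s=p_s-2$. The last coordinate of $\mu\setminus(\bm c+\ee_F)$ is always $d_s-1$ (because $c_s+[s\in F]\geq 1$), and a direct computation using~\eqref{eq:mu-minus-c} shows that stripping it gives exactly $\mu'\setminus\bigl((\bm c-\ee_s)+\ee_F\bigr)$. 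Thus $\Delta^{\mu,\bm c}(I_{n+1})=\Delta^{\mu',\bm c-\ee_s}(I_n)$, proving the second clause of (ii).

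The remaining first clause of (ii) is where the argument is slightly more delicate. When $c_s=0$, the last coordinate of $\mu\setminus(\bm c+\ee_F)$ is $d_s$ if $s\notin F$ and $d_s-1$ if $s\in F$, and a case-split (treating $p_s\geq 2$ and $p_s=1$ separately, with $\mu'$ of length $s$ or $s-1$ accordingly) shows that in both sub-cases $(\mu\setminus(\bm c+\ee_F))_{[n]}$ depends only on $F\setminus\{s\}$. Concretely, it equals $\mu'\setminus(\bm c''+\ee_{F\setminus\{s\}})$ for a fixed $\bm c''$ independent of whether $s\in F$. Hence membership in $\Delta^{\mu,\bm c}(I_{n+1})$ is insensitive to adjoining the vertex $s$, making the complex a cone with apex $s$, so it is contractible and all its reduced homology vanishes, giving $\gamma_i^{\mu,\bm c}(I_{n+1})=0$. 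The main obstacle is keeping track of the bookkeeping when $p_s=1$, since then $s(\mu')=s-1$ so the dimensions of the ambient simplex change; aside from this case analysis, the argument is routine.
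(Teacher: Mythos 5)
Your proposal is correct and takes essentially the same route as the paper: the same observation that $\rho\in P(I_{n+1})$ iff $(\rho_1,\dots,\rho_n)\in P(I_n)$, the identifications $\Delta^{\mu,\bm c}(I_{n+1})=\Delta^{(\mu_1,\dots,\mu_n),\bm c}(I_n)$ in case (i) and $\Delta^{\mu,\bm c}(I_{n+1})=\Delta^{(\mu_1,\dots,\mu_n),\bm c-\ee_s}(I_n)$ when $c_s>0$, and the cone-with-apex-$s$ argument when $c_s=0$. The $p_s=1$ bookkeeping you flag is harmless: the cone argument only needs that the first $n$ entries of $\mu\setminus(\bm c+\ee_F)$ do not depend on whether $s\in F$, with no need to rewrite them as $\mu'\setminus(\bm c''+\ee_{F\setminus\{s\}})$.
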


We note that Theorem \ref{thm:murai} can be recovered from Theorem \ref{thm:varying} using Theorem~\ref{thm:main}. If $\mu_{n+1}> \mu_n$ then $c_s=0$, so Theorem \ref{thm:murai}(ii) follows from Theorem \ref{thm:varying}(ii).

\begin{proof}
We first observe that for any $\rho=(\rho_1,\dots,\rho_n,\rho_{n+1}) \in P_{n+1}$, one has
\begin{align}
\label{7-1}
\rho \in P(I_{n+1}) \Longleftrightarrow (\rho_1,\dots,\rho_n) \in P(I_n)
\end{align}
since both conditions in \eqref{7-1} are equivalent to the condition that $\rho \geq \lambda_k$ for some~$k$.
Throughout the proof we will write $\hat \mu=(\mu_1,\dots,\mu_n)$.

(i) If $\mu_{n+1}=0$ then it follows from \eqref{eq:def-Del-mu-c-I} that
$\Delta^{\mu,\bm c}(I_{n+1})=\Delta^{\hat \mu,\bm c}(I_n)$, and therefore $\gamma_i^{\mu,\bm c}(I_{n+1})=\gamma_i^{\hat \mu, \bm c}(I_n)$ for all $i$, proving (i).

(ii) Suppose now that $\mu_{n+1} >0$.
We first consider the case when $c_s=0$, and show that $\Delta^{\mu,\bm c}(I_{n+1})$ is a cone with apex $s$. Indeed, suppose that $c_s=0$, consider any face $F \in \Delta^{\mu,\bm c}(I_{n+1})$ with $s \not \in F$, and let $\mu \setminus (\bm c + \ee_F) =(\mu_1',\dots,\mu_{n+1}')$.
We have 
\[\mu \setminus (\bm c + \ee_{F \cup \{s\}}) = \mu \setminus (\bm c +\ee_F) -\eee_{n+1} = (\mu'_1,\cdots,\mu'_n,\mu'_{n+1}-1)\] 
since $c_s=0$, so applying \eqref{7-1} twice we obtain
$$\mu \setminus (\bm c + \ee_{F \cup \{s\}}) \in P(I_{n+1}) \Longleftrightarrow (\mu_1',\dots,\mu_n') \in P(I_n) \Longleftrightarrow \mu \setminus (\bm c+ \ee_F) \in P(I_{n+1}).$$
This proves that $F \cup \{s \} \in \Delta^{\mu,\bm c}(I_{n+1})$ and therefore $\Delta^{\mu,\bm c}(I_{n+1})$ is a cone with apex~$s$. We get $\widetilde H_i(\Delta^{\mu,\bm c}(I_{n+1}))=0$, and hence $\gamma_i^{\mu,\bm c}(I_{n+1})=0$, for all $i$.

To conclude, we consider the case when $c_s > 0$. Since $c_s\leq p_s-1$ by hypothesis, we have $p_s\geq 2$, hence $\mu_n=\mu_{n+1}$, and therefore ${\bf p}(\hat \mu)={\bf p}(\mu)-\ee_{s}$. We claim that
\begin{equation}\label{eq:DIn+1=DIn}
\Delta^{\mu,\bm c}(I_{n+1})=\Delta^{\hat \mu, \bm c-\ee_s}(I_n).
\end{equation}
To prove \eqref{eq:DIn+1=DIn}, consider any subset $F \subset [s]$ and write $\mu \setminus (\bm c + \ee_F)=(\mu_1',\dots,\mu_{n+1}')$ as before. Since ${\bf p}(\hat \mu)={\bf p}(\mu) -\ee_s$, it follows that
\begin{align}
\label{7-2}
\hat \mu \setminus (\bm c -\ee_s+ \ee_F)=(\mu_1',\dots,\mu_n').
\end{align}
Using \eqref{7-1}, we have
$$\mu \setminus (\bm c + \ee_F) \in P(I_{n+1}) \Longleftrightarrow (\mu_1',\dots,\mu_n') \in P(I_n),$$
which combined with \eqref{7-2} implies that $F \in \Delta^{\mu,\bm c}(I_{n+1})$ if and only if $F \in \Delta^{\hat \mu,\bm c-\ee_s}(I_n)$. This proves \eqref{eq:DIn+1=DIn}, showing that $\gamma_i^{\mu,\bm c}(I_{n+1})=\gamma_i^{\hat{\mu},\bm c-\ee_s}$ for all $i$, as desired.
\end{proof}

\begin{example}
Let $I_m=\langle (5,1),(2,2) \rangle_{\sym_m}$ for $m \geq 2$. If we apply Theorem~\ref{thm:main} to~$I_2$, we see that the only numbers $\gamma_i^{\mu,\bm c}$ that are non-zero are
\begin{equation}\label{eq:gams-m=2}
\gamma_0^{(2,2),(0)}(I_2) = \gamma_0^{(5,1),(0)}(I_2)= \gamma_1^{(5,2),(0,0)}(I_2)=1.
\end{equation}
In particular, we have
$$\Tor_0(I_2,\kk) \cong \Tor_0(I_2,\kk)_{\langle (2,2)\rangle} \oplus \Tor_0(I_2,\kk)_{\langle (5,2)\rangle} \cong S^{\ydiagram 2} \oplus \mathcal S^{(\ydiagram 1, \ydiagram 1)}$$
and
$$\Tor_1(I_2,\kk) \cong \Tor_1(I_2,\kk)_{\langle (5,2) \rangle } \cong \mathcal S^{(\ydiagram 1, \ydiagram 1)}.$$

Based on \eqref{eq:gams-m=2}, Theorem \ref{thm:varying} gives a recipe to compute all the numbers $\gamma_i^{\mu,\bm c}$ for all the ideals $I_m$ with $m \geq 2$. For instance, when $m=4$ we have
\begin{align*}
&
\gamma_0^{(2,2,0,0),(0)}(I_4)=
\gamma_0^{(2,2,2,0),(1)}(I_4)=
\gamma_0^{(2,2,2,2),(2)}(I_4)=1,\\
&\gamma_0^{(5,1,0,0),(0,0)}(I_4)=
\gamma_0^{(5,1,1,0),(0,1)}(I_4)=
\gamma_0^{(5,1,1,1),(0,2)}(I_4)=1,\\
&\gamma_1^{(5,2,0,0),(0,0)}(I_4)=
\gamma_1^{(5,2,2,0),(0,1)}(I_4)=
\gamma_1^{(5,2,2,2),(0,2)}(I_4)=1,
\end{align*}
and these are all the non-zero numbers $\gamma_i^{\mu,\bm c}$ for $I_4$.
By Theorem~\ref{thm:main}, the $\sym_4$-module structure of $\Tor_i(I_4,\kk)$ is then computed as follows.
$$
\Tor_0(I,\kk) \cong
\Tor_0(I,\kk)_{\langle(2,2,0,0)\rangle} \oplus
\Tor_0(I,\kk)_{\langle(5,1,0,0)\rangle} 
\cong \mathcal S^{(\ydiagram 2, \ydiagram 2)} \oplus \mathcal S^{(\ydiagram 1, \ydiagram 1, \ydiagram 2)},$$
\begin{align*}
\Tor_1(I,\kk) &\cong
\Tor_1(I,\kk)_{\langle(2,2,2,0)\rangle} \oplus
\Tor_1(I,\kk)_{\langle(5,1,1,0)\rangle} \oplus
\Tor_1(I,\kk)_{\langle(5,2,0,0)\rangle}\\ 
& \cong \mathcal S^{(\ydiagram {2,1}, \ydiagram 1)} \oplus \mathcal S^{(\ydiagram 1, \ydiagram {1,1}, \ydiagram 1)}\oplus \mathcal S^{(\ydiagram 1, \ydiagram 1, \ydiagram 2)},
\end{align*}
\begin{align*}
\Tor_2(I,\kk) &\cong
\Tor_2(I,\kk)_{\langle(2,2,2,2)\rangle} \oplus
\Tor_2(I,\kk)_{\langle(5,1,1,1)\rangle} \oplus
\Tor_2(I,\kk)_{\langle(5,2,2,0)\rangle}\\ 
& \cong S^{\ydiagram {2,1,1}} \oplus \mathcal S^{(\ydiagram 1, \ydiagram {1,1,1})}\oplus \mathcal S^{(\ydiagram 1, \ydiagram {1,1}, \ydiagram 1)},
\end{align*}
and
$$
\Tor_3(I_4,\kk) \cong \Tor_3(I_4,\kk)_{\langle (5,2,2,2)\rangle } \cong \mathcal S^{(\ydiagram 1, \ydiagram {1,1,1})}.$$
By computing the dimensions of the relevant $\sym_4$-representations, we obtain the Betti tables of $I_2$ (left) and $I_4$ (right) below.

\[
\begin{array}{c}
\begin{matrix}
&0&1\\
\text{total:}&3&2\\
\text{4:}&1&\text{.}\\
\text{5:}&\text{.}&\text{.}\\
\text{6:}&2&2\\
\end{matrix}
\end{array}
\hspace{60pt}
\begin{array}{c}
\begin{matrix}
&0&1&2&3\\
\text{total:}&18&32&19&4\\
\text{4:}&6&\text{.}&\text{.}&\text{.}\\
\text{5:}&\text{.}&8&\text{.}&\text{.}\\
\text{6:}&12&24&7 &\text{.}\\
\text{7:}&\text{.}&\text{.}&12&\text{.}\\
\text{8:}&\text{.}&\text{.}&\text{.}&4
\end{matrix}
\end{array}
\]
\end{example}

Theorem \ref{thm:varying}(ii) tells that the representation of $\Tor_i(I_\ell,\kk)$ and that of $\Tor_{i+1}(I_{\ell+1},\kk)$ are related when $I_m$ is generated by monomials.
We expect that a similar phenomenon occurs even when $I_m$ is not generated by monomials,
and end this paper with the following question, which is inspired from our result and a result given in \cite{SY}

\begin{question}
Let $I_m$ be as in \eqref{eq:def-Im}
and let $i$ be a sufficiently large integer.
Suppose $\mathrm{char}(\kk)=0$.
Is is true that, for all $\ell > i$,
if $S^{(\lambda_1,\dots,\lambda_r)}$ is a summand of $\Tor_i(I_\ell,\kk)$, then $S^{(\lambda_1,\dots,\lambda_r,1)}$ is a summand of $\Tor_i(I_{\ell+1},\kk)$?
\end{question}

\section*{Acknowledgements}

The second author would like to thank Eric Ramos and Steven Sam for helpful conversations regarding this project. Experiments with the computer algebra software Macaulay2 \cite{M2} have provided numerous valuable insights. 
The first author acknowledges the support Waseda University Grant Research Base Creation 2020C-147.
The second author acknowledges the support of the National Science Foundation Grant No.~1901886.

\bibliographystyle{plain}
\bibliography{Hochster}


\end{document}